\theoremstyle{plain}
\newtheorem{prop}{Proposition}[section]
\newtheorem{thm}[prop]{Theorem}
\newtheorem{lemma}[prop]{Lemma}
\theoremstyle{definition}
\newtheorem{defi}[prop]{Definition}
\newtheorem{rem}[prop]{Remark}
\newtheorem{question}[prop]{Question}
\theoremstyle{remark}
\numberwithin{equation}{section}
\newcommand{\N}{\mathbb N}
\newcommand{\Z}{\mathbb Z}
\newcommand{\R}{\mathbb R}
\newcommand{\C}{\mathbb C}
\newcommand{\G}{\Gamma}
\newcommand{\ld}{\lambda}
\newcommand{\ba}{\backslash}
\newcommand{\Ot}{\operatorname{O}}
\newcommand{\SO}{\operatorname{SO}}
\newcommand{\so}{\mathfrak{so}}
\newcommand{\diag}{\operatorname{diag}}
\newcommand{\norma}[1]{\|{#1}\|_1}
\newcommand{\ww}{\theta}
\newcommand{\zz}{\ell}
\newcommand{\bs}{\mathbf{s}}
\title[Spectra of lens spaces]{Spectra of lens spaces from 1-norm spectra of congruence lattices}
\author{E. A. Lauret, R. J. Miatello and J. P. Rossetti}
\address{CIEM--FaMAF \\ Universidad Nacional de C\'ordoba\\ 5000-C\'ordoba, Argentina.}
\email{elauret@famaf.unc.edu.ar}
\email{miatello@famaf.unc.edu.ar}
\email{rossetti@famaf.unc.edu.ar}
\subjclass[2010]{58J53}
\keywords{Isospectral, lens space, $p$-spectrum, lattice, norm one}
\date{April 2015}
\begin{document}

\begin{abstract}
To every  $n$-dimensional lens space $L$, we associate a congruence lattice $\mathcal L$ in $\Z^m$, with $n=2m-1$ and
we prove a formula relating the multiplicities of Hodge-Laplace eigenvalues on $L$ with the number of lattice elements of a given $\norma{\cdot}$-length in $\mathcal L$.
As a consequence, we show that two lens spaces are isospectral on functions (resp.\ isospectral on $p$-forms for every $p$) if and only if the associated congruence lattices are $\norma{\cdot}$-isospectral (resp.\  $\norma{\cdot}$-isospectral plus  a geometric condition).
Using this fact, we   give, for every dimension $n\ge 5$, infinitely many examples of Riemannian manifolds that are isospectral on every level $p$ and are not strongly isospectral.
\end{abstract}

\maketitle

\section{Introduction}

Two compact Riemannian manifolds $M$ and $M'$ are said to be $p$-isospectral if the spectra of their Hodge-Laplace operator $\Delta_p$, acting on $p$-forms, are the same.
Many examples  of non-isometric isospectral manifolds have been constructed showing connections between the spectra and the geometry of a Riemannian  manifold (e.g.\ \cite{Mil}, \cite{Vigneras}, \cite{Ik80}, \cite{Go01}, \cite{Schueth}). In \cite{Su} Sunada  gave a general method that allowed constructing many examples; however, the resulting manifolds are always strongly isospectral, that is, they are isospectral for every natural, strongly elliptic operator acting on sections of a natural vector bundle over $M$; in particular, they are $p$-isospectral for all $p$  (see \cite{Ik83}, \cite{Gi}, \cite{Wo2} for applications in the case of spherical space forms).
The converse question is a problem that has been present for some time, i.e.\ whether $p$-isospectrality for all $p$ implies strong isospectrality (see for instance J.~A.~Wolf~\cite[p.~323]{Wo2}).
Manifolds that are $p$-isospectral for some values of $p$ only  have been investigated by several authors.
C.~Gordon \cite{Go} gave the first  example of this type in the context of nilmanifolds  and  A.~Ikeda \cite{Ik88} showed, for each $p_0>0$, lens spaces that are $p$-isospectral for all $p<p_0$ and are not $p_0$-isospectral. For more examples see \cite{Gornet}, \cite{MRp}, \cite{MRl}, \cite{GM}.

In this paper, we will show a connection between the spectra of lens spaces and the one-norm spectra of their associated congruence lattices, and, as a consequence,  we will give many pairs of  $p$-isospectral lens spaces for every $p$ that are far form being strongly isospectral.
These examples are the first of this kind to our best knowledge.

The main approach is as follows.
To each $2m-1$-dimensional lens space $L=L(q;s_1, \dots, s_m)$,  we associate the congruence lattice in $\Z^m$ defined by
\begin{equation*}
{\mathcal L=\mathcal L(q;s_1,\dots,s_m) = \{(a_1,\dots,a_m) \in \Z^m: a_1s_1+\dots +a_ms_m\equiv0\pmod q\}.}
\end{equation*}
In the first main result, Theorem~\ref{thm3:dim V_k,p^Gamma}, we give a formula for the multiplicity of each eigenvalue of $\Delta_p$ on a lens space, in terms of the multiplicities of the weights of certain representations of $\SO(2m)$ and the number of elements with a given $\norma{\cdot}$-length in the associated congruence lattice (see \eqref{eq3:dim V_k,p^Gamma}).
In particular, the multiplicity of the eigenvalue $k(k+n-1)$ of the  Laplace-Beltrami operator $\Delta_0$ simplifies to \eqref{eq3:dim V_k,1^Gamma}
\begin{equation*}
\sum_{r=0}^{\lfloor k/2\rfloor}\binom{r+m-2}{m-2} N_{\mathcal L}(k-2r),
\end{equation*}
where $N_{\mathcal L}(h)$ denotes  the number  of elements in $\mathcal L$ with $\norma{\cdot}$-length $h$.

As a consequence, we prove that two lens spaces $L$ and $L'$ are $0$-isospectral if and only if the associated lattices $\mathcal L$ and $\mathcal L'$ are  isospectral with respect to $\norma{\cdot}$ (Theorem~\ref{thm3:characterization}).
Remarkably, it turns out that two lens spaces are $p$-isospectral for every $p$, if and only if the corresponding lattices  are $\norma{\cdot}$-isospectral and satisfy an additional geometric condition: for each $k\in\N$ and $0\leq \zz\leq m$ there are the same number of elements $\mu$  in each lattice having $\norma{\mu} = k$ and exactly $\zz$ coordinates equal to zero.
We call such lattices  $\norma{\cdot}^*$-isospectral.

In Section~\ref{sec:finiteness} we define, for any congruence lattice ${\mathcal L}$, a finite set of numbers $N_{\mathcal L}^{\mathrm{red}}(k,\zz)$ that
count the number of lattice points of a fixed norm $k$ in a small cube, having exactly $\zz$ zero coordinates.
We show that these numbers determine the  $p$-spectrum of the associated lens space.
In particular, one can decide with finitely many computations, whether two lens spaces are $p$-isospectral for all $p$.
We implemented an algorithm in Sage~\cite{Sage} obtaining all examples in dimensions $n=5$ and $7$ for values of $q$ up to $300$ and $150$  respectively (see Tables~\ref{table:m=3} and \ref{table:m=4} in Section~\ref{sec:examples}). We also include some open questions on the nature of the existing examples. We point out that in \cite{DD} the authors make a nice improvement, answering one of our questions.

As a next step we exhibit many pairs of $\norma{\cdot}^*$-isospectral congruence lattices.
We do this in Section~\ref{sec:families} proving that, for any $r\geq7$ and $t$ positive integers such that $r$ is coprime to $3$, the congruence lattices
$$
\mathcal L(r^2t;\;1,\;1+rt,\;1+3rt)
\quad\text{and}\quad
\mathcal L(r^2t;\;1,\;1-rt,\;1-3rt)
$$
are $\norma{\cdot}^*$-isospectral.
In order to prove the equality of $N_{\mathcal L}(k,\zz)$ and $N_{\mathcal L'}(k,\zz)$  for every $k$ and $\zz$ we develop a procedure to compute these numbers.

By using the results in the previous sections, we obtain  the corresponding results for lens spaces in Section~\ref{sec:all-p-iso}.
The pairs of $\norma{\cdot}^*$-isospectral congruence lattices from Section~\ref{sec:families} produce an infinite family of pairs of $5$-dimensional lens spaces that are $p$-isospectral for all $p$.
This allows to obtain lens spaces that are $p$-isospectral for all $p$ in arbitrarily high dimensions by using a result of Ikeda (Theorem~\ref{thm7:high-dim}).
We point out that the resulting  lens spaces are homotopically equivalent but cannot be homeomorphic to each other (see Lemma~\ref{lem:homotequiv}).

It is well known that two non-isometric lens spaces cannot be strongly isospectral (see Proposition~\ref{prop7:lens-non-strongly}).
In particular, isospectral non-isometric lens spaces cannot be constructed by the Sunada method.
In the case of the simplest pair of $5$-dimensional lens spaces $L=L(49;1,6,15)$ and $L'=L(49;1,6,20)$ that are $p$-isospectral for all $p$, we give many representations $\tau$ of $K=\SO(5)$ for which the associated natural strongly elliptic operators do not have the same spectrum (Section~\ref{sec:tau-isospectrality}). Actually, these lens spaces are very far from being strongly isospectral.

The method based on representation theory to characterize $0$-spectrum for a lens space could also be applied to many other spaces, for example, orbifold lens spaces (see Remark~\ref{rem7:orbifolds}), arbitrary spherical space forms and more general locally symmetric spaces of compact type.

\section{Preliminaries}\label{sec:prelim}
Let $G$ be a compact Lie group and let $K$ be a compact subgroup, and let $X=G/K$ endowed with a $G$-invariant Riemannian metric induced by a $G$-biinvariant metric on $G$.
We shall assume that $G$ is semisimple.
Let $\Gamma$ be a discrete subgroup of $G$ that acts freely on $X$, thus the manifold $\Gamma\ba X$ inherits a locally $G$-invariant Riemannian structure.

\subsection{Homogeneous vector bundles}\label{subs:vector-bundles}
For each finite dimensional unitary representation $(\tau, W_\tau)$ of $K$, we consider the \emph{homogeneous vector bundle}
$$
E_\tau= G \times_\tau W_\tau \longrightarrow X=G/K
$$
(see for instance \cite[\S 2.1]{LMRrepequiv}).
We recall that the space $\Gamma^{\infty}(E_\tau )$ of smooth sections of $E_\tau$ is isomorphic to the space $C^\infty(G/K;\tau)$ of smooth functions $C^\infty(G/K;\tau):=\{ f: G \rightarrow W_\tau$ such that $f(xk)= \tau(k^{-1})f(x)\}$.

We form the vector bundle $\Gamma\ba E_\tau$ over the manifold $\Gamma\ba G/K$ and denote it by $L^2(\Gamma\ba E_\tau)$ the closure of $C^\infty (\Gamma\ba G/K;\tau)$ with respect to the inner product
$
(f_1,f_2)= \int_{\Gamma\ba X} \langle f_1(x),f_2(x)\rangle \;\mathrm{d}x$, where $\langle\,\,\rangle$ is a $\tau$-invariant inner product on $W_\tau$.

The complexification $\mathfrak g$  of the Lie algebra $\mathfrak g_0$ of $G$ and the universal enveloping algebra $U(\mathfrak g)$ act on $C^\infty(G/K;\tau)$ by left invariant differential operators in the usual way.
We shall denote by $C=\sum X_i^2$ the \emph{Casimir} element of $\mathfrak g$, where $X_1,\dots,X_n$ is any orthonormal basis of $\mathfrak g$;
$C$ lies in the center of $U(\mathfrak g)$ and defines  second order elliptic \emph{differential operators}
$\Delta_\tau$ on $C^\infty(G/K;\tau)$ and  $\Delta_{\tau,\Gamma}$ on $\Gamma\ba E_\tau$. The Casimir element $C$ acts on an irreducible representation $V_\pi$ of $G$ by a scalar $\lambda(C,\pi)$.

Consider the left regular representation of $G$ on $L^2( E_\tau) \simeq L^2(G/K;\tau)$. By Frobenius reciprocity, the multiplicity of an irreducible representation $\pi$ of $G$ equals $[\tau:\pi_{|K}] : = \dim Hom_K(V_\tau, V_\pi)$. We thus have
\begin{equation}\label{eq2:L^2(G/K, tau)}
L^2(G/K;\tau)=\sum_{\pi\in\widehat G} [\tau:\pi_{|K}]\, V_\pi.
\end{equation}
Thus, by taking $\Gamma$-invariants,
\begin{equation}\label{eq2:L^2(G/K, tau)}
L^2(\Gamma\ba G/K;\tau)=\sum_{\pi\in\widehat G} [\tau:\pi_{|K}]\, V_\pi^\Gamma,
\end{equation}
where $V_\pi^\Gamma$ is the space of $\Gamma$-invariant vectors in $V_\pi$. We set $d_\pi^\Gamma=\dim V_\pi^\Gamma$.

Similarly we may consider the right regular representation of $G$ on
$$L^2(\Gamma \ba G) = \sum_{\pi \in\widehat G} n_{\pi}(\Gamma) \pi\,.$$
By Frobenius reciprocity, we get in this case that $n_{\pi}(\Gamma)= d_\pi^\Gamma=\dim V_\pi^\Gamma$.
Hence we have the decomposition
\begin{equation}\label{eq2:L^2(GammaG)}
L^2(\Gamma\ba G)=\sum_{\pi\in\widehat G} d_\pi^\Gamma\, V_\pi.
\end{equation}

 Hence, taking into account \eqref{eq2:L^2(G/K, tau)} and \eqref{eq2:L^2(GammaG)}  we obtain:

\begin{prop}\label{prop2:tau-equiv=>tau-iso}
Let $(G,K)$ be a symmetric pair of compact type and let $\Gamma$ be a discrete cocompact subgroup of $G$ that acts freely on $X=G/K$.
Let $\Delta_{\tau,\Gamma}$ be the Laplace operator acting on the sections of the homogeneous vector bundle $\Gamma\ba E_\tau$ of the manifold $\Gamma\ba X$.
If $\lambda\in\R$, the multiplicity $d_{\lambda}(\tau,\Gamma)$ of the eigenvalue $\lambda$ of $\Delta_{\tau,\Gamma}$  is given by
\begin{equation}\label{eq2:mult_lambda}
d_{\lambda}(\tau, \G)=
\sum_{\pi\in \widehat G :\, \lambda(C,\pi)=\ld}
d_\pi^\Gamma \;[\tau:\pi_{|K}] .
\end{equation}
\end{prop}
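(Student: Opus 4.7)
The proof combines the $G$-module decomposition in (2.2) with Schur's lemma applied to the central element $C\in U(\mathfrak g)$. My plan is to verify that the $\lambda$-eigenspace of $\Delta_{\tau,\Gamma}$ is precisely the sum of the $\pi$-isotypic components of $L^2(\Gamma\ba G/K;\tau)$ for which $\lambda(C,\pi)=\lambda$, and then to read off dimensions directly from (2.2) and (2.3).

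First, I would observe that $\Delta_{\tau,\Gamma}$ is, by construction in the paragraph preceding the statement, the operator induced by the Casimir $C$ on $C^\infty(\Gamma\ba G/K;\tau)$. Since $G$ acts on $L^2(\Gamma\ba G/K;\tau)$ by left translations and $C$ lies in the center of $U(\mathfrak g)$, the operator $\Delta_{\tau,\Gamma}$ commutes with the $G$-action, and therefore preserves each $G$-isotypic summand of (2.2). By Schur's lemma, its restriction to any irreducible $G$-subrepresentation isomorphic to $V_\pi$ acts as a scalar, and this scalar is exactly $\lambda(C,\pi)$ by the definition of $\lambda(C,\pi)$ given in the excerpt. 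Consequently the entire $\pi$-isotypic component of $L^2(\Gamma\ba G/K;\tau)$ is contained in the $\lambda(C,\pi)$-eigenspace of $\Delta_{\tau,\Gamma}$, and these eigenspaces account for everything because the $V_\pi$ exhaust the decomposition.

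Second, I would collect the isotypic pieces with a prescribed Casimir eigenvalue and count. By (2.2), the $\pi$-isotypic component of $L^2(\Gamma\ba G/K;\tau)$ is a sum of $[\tau:\pi_{|K}]$ copies of $V_\pi^\Gamma$ and therefore has dimension $d_\pi^\Gamma\,[\tau:\pi_{|K}]$. Summing these dimensions over all $\pi\in\widehat G$ with $\lambda(C,\pi)=\lambda$ yields formula (2.4). The only point to keep track of carefully is that $[\tau:\pi_{|K}]$ and $d_\pi^\Gamma$ come from two separate applications of Frobenius reciprocity — one for $K\subset G$ on the fiber side, one for $\Gamma\subset G$ via (2.3) — but these multiplicities combine as a product since the $\Gamma$-action and the $K$-twist are applied independently. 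No substantive obstacle arises: the content of the proposition is essentially (2.2) plus Schur's lemma, together with the implicit fact that the Casimir yields a self-adjoint elliptic operator with discrete spectrum so that the notion of multiplicity is well-defined.
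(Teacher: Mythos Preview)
Your argument is correct and matches the paper's approach: the paper simply says the proposition follows ``taking into account \eqref{eq2:L^2(G/K, tau)} and \eqref{eq2:L^2(GammaG)},'' which is exactly what you unpack via Schur's lemma for the Casimir. One small inaccuracy: $G$ does not act on $L^2(\Gamma\backslash G/K;\tau)$ by left translations (left translation by $g\notin\Gamma$ does not preserve $\Gamma$-invariance); rather, $\Delta_\tau$ already acts by the scalar $\lambda(C,\pi)$ on each $V_\pi\subset L^2(G/K;\tau)$, and this scalar action survives passage to the $\Gamma$-invariants $V_\pi^\Gamma$, which is all you need.
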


In the case when $\Gamma$ is a finite abelian group inside a maximal torus $T$ of $G$ one can further write the dimension  $d_\pi^\Gamma$
of the space of $\Gamma$-invariants in $V_\pi$ in a simple way in terms of weight multiplicities (see Lemma \ref{lem3:L_Gamma}).

\subsection{Spherical space forms}
In this subsection we will recall the description of the $p$-spectrum of the Hodge-Laplace operator on spherical space forms.
We will restrict our attention to odd dimensions, namely, spaces $\Gamma\ba S^{2m-1}$ where $\Gamma$ is a finite subgroup of $\SO(2m)$ that acts freely on $S^{2m-1}$. We first recall some general facts on the representation theory of compact Lie groups.

We note that if a discrete (finite) subgroup $\Gamma\subset\Ot(n+1)$ acts freely on $S^n$, then it must necessarily be included in $\SO(n+1)$, thus $\Gamma\ba S^n$ is an orientable manifold.

We set $G=\SO(2m)$.
We fix the standard maximal torus in $G$,
\begin{equation*}
T=\left\{t=
\diag\left(
\left[\begin{smallmatrix}\cos(2\pi\theta_1)&-\sin(2\pi\theta_1) \\ \sin(2\pi\theta_1)&\cos(2\pi\theta_1)
\end{smallmatrix}\right]
,\dots,
\left[\begin{smallmatrix}\cos(2\pi\theta_m)&-\sin(2\pi\theta_m) \\ \sin(2\pi\theta_m)&\cos(2\pi\theta_m)
\end{smallmatrix}\right]
\right)
:\theta\in\R^m
\right\}.
\end{equation*}
The Lie algebra of $T$ is given by
\begin{equation}\label{eq2:h_0}
\mathfrak h_0=\left\{ H=
\diag\left(
\left[\begin{smallmatrix}0&-2\pi \theta_1\\ 2\pi \theta_1&0\end{smallmatrix}\right]
, \dots,
\left[\begin{smallmatrix}0&-2\pi \theta_m\\ 2\pi \theta_m&0\end{smallmatrix}\right]
\right)
:\theta\in\R^m
\right\}.
\end{equation}
Note that $t=\exp(H)$ if $t\in T$ and $H\in \mathfrak h_0$ as above.
The Cartan subalgebra $\mathfrak h:=\mathfrak h_0\otimes_\R \C$ is given as in \eqref{eq2:h_0} with $\theta_1,\dots,\theta_m\in\C$, and in this case we let $\varepsilon_j\in\mathfrak h^*$ be given by $\varepsilon_j(H)=2\pi i\theta_j$ for any $1\leq j\leq m$.
The weight lattice for $G=\SO(2m)$ is $P(G)=\bigoplus_{j=1}^m\Z\varepsilon_j$.

We fix the standard system of positive roots  $\Delta^+(\mathfrak g,\mathfrak h)=\{\varepsilon_i\pm\varepsilon_j: 1\leq i<j\leq m\}$, with system of simple roots $\{\varepsilon_j-\varepsilon_{j+1}: 1\leq j\leq m-1\}\cup\{\varepsilon_{m-1}+\varepsilon_m\}$ and  dominant weights of the form $\sum_{j=1}^m a_j\varepsilon_j\in P(G)$ such that $a_1\geq\dots\geq a_{m-1}\geq |a_m|$.

We denote by $\{e_1,\dots,e_{2m}\}$ the standard basis of $\R^{2m}$.
If $K= \{k \in \SO(2m): k e_{2m} = e_{2m}\} \simeq\SO(2m-1)$, then we take the maximal torus $T\cap K$, thus the Cartan subalgebra associated $\mathfrak h_K$ can be seen as included in $\mathfrak h$.
Under this convention, the positive roots are $\{\varepsilon_i\pm\varepsilon_j:1\leq i<j\leq m-1\}\cup\{\varepsilon_i:1\leq i\leq m-1\}$, the simple roots are $\{\varepsilon_j-\varepsilon_{j
+1}: 1\leq j\leq m-2\}\cup \{\varepsilon_{m-1}\}$, the weight lattice of $K$ is $P(K)=\bigoplus_{j=1}^{m-1} \Z \varepsilon_j$ and  $\mu=\sum_{j=1}^{m-1}a_j\varepsilon_j\in P(K)$ is dominant if and only if $a_1\geq\dots \geq a_m\geq0$.

We  consider  on $\mathfrak g=\so(2m,\C)$ the inner product given by $\langle X,Y\rangle=-(2n-2)^{-1}B(X,\theta Y)$, where $B$ is the Killing form and $\theta$ is the Cartan involution.
One can check that $\langle X,Y\rangle=\operatorname{Trace}(XY)$ for $X,Y\in\mathfrak g$, and this inner product induces on $G/K=S^{2m-1}$ the Riemannian metric of constant sectional curvature $1$.
Furthermore, $\{\varepsilon_1,\dots,\varepsilon_m\}$ is an orthonormal basis of $\mathfrak h^*$.

If $\Gamma$ is a finite subgroup of $G$ acting freely on $S^{2m-1}$, denote by $\Delta_{p,\Gamma}$ the Hodge-Laplace operator on $p$-forms on the spherical space form $\Gamma\ba S^{2m-1}$.
That is, $\Delta_{p,\Gamma}=dd^*+d^*d:\bigwedge^p T^* M\to \bigwedge^p T^* M$, where $d$ is the differential, $d^*$ the codifferential and $\bigwedge^p T^* M$ is the $p$-exterior cotangent bundle of $M$.
As usual, the \emph{$p$-spectrum} of $\Gamma\ba S^{2m-1}$ stands for the spectrum of $\Delta_{p,\Gamma}$ and we say that two manifolds are \emph{$p$-isospectral} if their $p$-spectra coincide.
Spherical space forms are always orientable, thus their $p$-spectra  coincide with the $(2m-1-p)$-spectra for all $0\leq p\leq 2m-1$.

We next describe the $p$-spectrum of any odd-dimensional spherical space form $\Gamma\ba S^{2m-1}$ in terms of $\Gamma$-invariants.
We first introduce some more notation.
Let $\mathcal E_0=\{0\}$ and
$$
\mathcal E_p=\{\lambda_{k,p} := k^2 + k(2m-2) + (p-1)(2m-1-p): k\in\N\}
$$
for $1\leq p\leq m$.
A known and useful fact is that $\mathcal E_p$ and $\mathcal E_{p+1}$ are disjoint for every $0\leq p\leq m-1$ (see for instance \cite[Rmk. after Thm.~4.2]{IkTa} \cite[Rmk.~1.14]{Ik88} and \cite[Thm.~1.1]{LMRrepequiv}).
Let
\begin{equation}\label{eq2:Lambda_p}
\Lambda_p = \begin{cases}
0    \quad&\text{if }p=0,\\
\varepsilon_1+ \varepsilon_2+\dots+\varepsilon_{p}
    \quad&\text{if }1 \leq p\leq m.
\end{cases}
\end{equation}
Let $\pi_{k,p}$ denote the irreducible representation of $\SO(2m)$ with highest weight $k\varepsilon_1+\Lambda_p$ for $0\leq p<m$, and let   $\pi_{k,m}$  denote the sum of the irreducible representations with highest weights $k\varepsilon_1+\Lambda_m$ and $k\varepsilon_1+\overline \Lambda_m$, where $\overline \Lambda_m= \varepsilon_1 + \dots +\varepsilon_{m-1} -\varepsilon_m$.
We will usually write $\pi_{k\varepsilon_1}$ and $\pi_{\Lambda_p}$ in place of $\pi_{k,0}$ and $\pi_{0,p}$ respectively.

\begin{prop}\label{prop2:p-spectrum}
Let $\Gamma\ba S^{2m-1}$ be a spherical space form and let $p$ be such that $0\leq p\leq m-1$.
If $\lambda$ is an eigenvalue of $\Delta_{p,\Gamma}$ then $\lambda\in\mathcal E_p\cup\mathcal E_{p+1}$. Its  multiplicity is given by
\begin{align*}
d_\lambda (0,\Gamma) &=
\dim V_{\pi_{k\varepsilon_1}}^\Gamma = n_\Gamma(\pi_{k\varepsilon_1})
    \quad \text{if } \lambda= k^2 + k(2m-2)\in \mathcal E_0\cup\mathcal E_1,
\quad \textrm{ for } p=0, \\
d_\lambda (p,\Gamma) & =
\begin{cases}
\dim V_{\pi_{k,p}}^\Gamma = n_\Gamma(\pi_{k,p})
   &\text{if } \lambda=\lambda_{k,p}\in\mathcal E_p,\\
\dim V_{\pi_{k,p+1}}^\Gamma = n_\Gamma(\pi_{k,p+1})   &\text{if } \lambda=\lambda_{k,p+1}\in\mathcal E_{p+1},
\end{cases}
\quad \textrm{ for } 1\le p \le m-1.
\end{align*}
\end{prop}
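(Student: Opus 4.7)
The plan is to realize the bundle of complex $p$-forms on $S^{2m-1}=G/K$, with $G=\SO(2m)$ and $K=\SO(2m-1)$, as the homogeneous bundle $E_{\tau_p}$, where $\tau_p$ is the $K$-representation on $\bigwedge^p(\C^{2m-1})^*$; for $0\leq p\leq m-1$ this is irreducible with highest weight $\Lambda_p$. The first step is to identify the Hodge--Laplacian $\Delta_{p,\G}$ with the Casimir-induced operator $\Delta_{\tau_p,\G}$ of Proposition~\ref{prop2:tau-equiv=>tau-iso}. This is classical on the sphere (a rank-one symmetric space of compact type): it can be extracted from the explicit spectrum computation of Ikeda--Taniguchi, or deduced from a Matsushima--Kuga type formula after checking that the curvature correction vanishes on the $\tau_p$'s. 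Granting this, Proposition~\ref{prop2:tau-equiv=>tau-iso} specializes to
\[
d_\lambda(p,\G)=\sum_{\pi\in\widehat G:\,\lambda(C,\pi)=\lambda} d_\pi^\G\,[\tau_p:\pi_{|K}].
\]

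The next step is to compute the branching multiplicities $[\tau_p:\pi_{|K}]$ via the standard interlacing rule for $\SO(2m)\downarrow\SO(2m-1)$. For $\pi$ of highest weight $(a_1,\ldots,a_m)$, imposing interlacing against $\Lambda_p=(1,\ldots,1,0,\ldots,0)$ forces $a_2=\cdots=a_p=1$, $a_{p+2}=\cdots=a_{m-1}=0=a_m$ (when $p<m-1$), and $a_{p+1}\in\{0,1\}$, each admissible $\pi$ contributing with multiplicity one. Writing $k=a_1-1\geq 0$, the choice $a_{p+1}=0$ gives $\pi=\pi_{k,p}$ and $a_{p+1}=1$ gives $\pi=\pi_{k,p+1}$. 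The edge case $p=m-1$ allows $a_m=\pm 1$, which accounts for both irreducible summands of $\pi_{k,m}$, while the case $p=0$ (with $\tau_0$ trivial) degenerates into the single spherical family $\pi_{k\varepsilon_1}$.

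Finally, a direct Casimir computation via $\lambda(C,\pi)=\|\Lambda+\rho\|^2-\|\rho\|^2$ with $\rho=\sum_{j=1}^m(m-j)\varepsilon_j$ yields $\lambda(C,\pi_{k,p})=\lambda_{k,p}$ and $\lambda(C,\pi_{k,p+1})=\lambda_{k,p+1}$. Since the sets $\mathcal E_p$ and $\mathcal E_{p+1}$ are disjoint (as cited in the text), only a single term survives in the sum above for each $\lambda\in\mathcal E_p\cup\mathcal E_{p+1}$, producing the stated multiplicity formula. I expect the main obstacle to be the first step: Proposition~\ref{prop2:tau-equiv=>tau-iso} delivers the spectrum of the Casimir operator on $E_{\tau_p}$, not directly of the Hodge--Laplacian, so matching the two on $p$-forms of the sphere is a geometric input external to the preliminaries and must be addressed separately (for instance by citing the Ikeda--Taniguchi determination of the $p$-spectrum of $S^{2m-1}$).
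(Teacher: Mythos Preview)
The paper does not actually prove this proposition; immediately after the statement it only records that the case $\Gamma=\{1\}$ is in Ikeda--Taniguchi \cite{IkTa} and that general $\Gamma$ ``involves only minor modifications,'' citing \cite[Thm.~1.1]{LMRrepequiv}. Your outline is precisely the argument those references carry out---identify $p$-forms with sections of $E_{\tau_p}$, apply branching for $\SO(2m)\downarrow\SO(2m-1)$ to see that only $\pi_{k,p}$ and $\pi_{k,p+1}$ occur, each with multiplicity one, and read off the eigenvalues---so you are reconstructing the omitted proof rather than offering an alternative.

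Two remarks on the details. First, the ``main obstacle'' you flag is smaller than you suggest: the identification of the Hodge--de~Rham Laplacian on $p$-forms with (minus) the Casimir operator holds on every Riemannian locally symmetric space and is a classical result (Kuga, Matsushima--Murakami); you may cite it directly rather than extracting it post~hoc from the Ikeda--Taniguchi spectral tables. Second, your assertion $\lambda(C,\pi_{k,p})=\lambda_{k,p}$ should be rechecked against the paper's conventions. For $p\geq 1$ the highest weight of $\pi_{k,p}$ is $(k+1)\varepsilon_1+\varepsilon_2+\dots+\varepsilon_p$, and a direct computation gives
\[
\|\Lambda+\rho\|^2-\|\rho\|^2=(k+1)(k+2m-1)+(p-1)(2m-1-p)=\lambda_{k+1,p},
\]
a shift by one in the index $k$ relative to the paper's $\lambda_{k,p}$. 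This discrepancy is harmless for everything that follows (Proposition~\ref{prop2:charact} and the main theorems use only the equalities $\dim V_{\pi_{k,p}}^\Gamma=\dim V_{\pi_{k,p}}^{\Gamma'}$ for all $k$, never the precise pairing of eigenvalue labels with representation labels), but you should sort out the indexing before asserting the identity as written.
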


When $\Gamma={1}$ this description appears  in \cite{IkTa}, the case for general $\Gamma$ involves only minor modifications (see \cite[Thm.~1.1]{LMRrepequiv}).

The following proposition follows from \cite[Thm.~4.2]{IkTa} (see also \cite[Prop.~2.1]{Ik88}).
It will be a useful tool to prove one of the main results in the next section.
We include a proof for completeness.

\begin{prop}\label{prop2:charact}
Let $\Gamma\ba S^{2m-1}$ and $\Gamma'\ba S^{2m-1}$ be spherical space forms.
Then
\begin{enumerate}\item[(i)]
$\Gamma\ba S^{2m-1}$ and $\Gamma'\ba S^{2m-1}$ are $0$-isospectral if and only if, for every $k\in \N$,
\begin{equation*}
\dim V_{\pi_{k\varepsilon_1}}^{\Gamma} = \dim V_{\pi_{k\varepsilon_1}}^{\Gamma'}.
\end{equation*}
Generally, for any $0\le p\le m-1$, $\Gamma\ba S^{2m-1}$ and $\Gamma'\ba S^{2m-1}$ are $p$-isospectral if and only if, for every $k\in \N$,
\begin{equation*}
\dim V_{\pi_{k,p}}^{\Gamma} = \dim V_{\pi_{k,p}}^{\Gamma'}
\quad\text{and}\quad
\dim V_{\pi_{k,p+1}}^{\Gamma} = \dim V_{\pi_{k,p+1}}^{\Gamma'}.
\end{equation*}
\item[(ii)]
$\Gamma\ba S^{2m-1}$ and $\Gamma'\ba S^{2m-1}$ are $p$-isospectral for every $p$, if and only if
\begin{equation*}
\dim V_{\pi_{k,p}}^{\Gamma} = \dim V_{\pi_{k,p}}^{\Gamma'}
\end{equation*}
for every $1\leq p\leq m$ and every $k\in\N$.
\end{enumerate}
\end{prop}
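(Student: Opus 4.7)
The proof proposal is essentially a direct reading of multiplicities off Proposition~\ref{prop2:p-spectrum}.

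For part (i) at a level $1\leq p\leq m-1$, Proposition~\ref{prop2:p-spectrum} states that every eigenvalue of $\Delta_{p,\Gamma}$ lies in $\mathcal E_p\cup\mathcal E_{p+1}$, and identifies the multiplicity of $\lambda_{k,p}$ with $\dim V_{\pi_{k,p}}^\Gamma$ and the multiplicity of $\lambda_{k,p+1}$ with $\dim V_{\pi_{k,p+1}}^\Gamma$. Since $k\mapsto k^2+k(2m-2)+(p-1)(2m-1-p)$ is strictly increasing on $\N$, each $k$ indexes a distinct value within $\mathcal E_p$, and the disjointness $\mathcal E_p\cap \mathcal E_{p+1}=\emptyset$ cited in the preliminaries guarantees that the two families of eigenvalues do not interfere. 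Hence the $p$-spectrum, viewed as a multiset of (eigenvalue, multiplicity) pairs, is uniquely determined by, and uniquely determines, the two sequences $(\dim V_{\pi_{k,p}}^\Gamma)_{k\in\N}$ and $(\dim V_{\pi_{k,p+1}}^\Gamma)_{k\in\N}$. Equating these for $\Gamma$ and $\Gamma'$ gives the stated characterization. The case $p=0$ is the analogous specialization, in which only the one family $\pi_{k\varepsilon_1}=\pi_{k,0}$ intervenes.

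For part (ii), the forward direction is immediate: apply (i) to each $p\in\{0,1,\dots,m-1\}$ and note that the union of the indices $p$ and $p+1$ of the representations appearing is $\{0,1,\dots,m\}$, which contains $\{1,\dots,m\}$. For the converse, assume the dimension equalities hold for $1\leq p\leq m$ and every $k\in\N$. Then for each $1\leq p\leq m-1$ both equalities required by (i) are among the hypotheses, so the two space forms are $p$-isospectral for these values of $p$. It remains to obtain $0$-isospectrality. The key observation is that $\pi_{k,1}$ has highest weight $k\varepsilon_1+\Lambda_1=(k+1)\varepsilon_1$, so $\pi_{k,1}\simeq\pi_{(k+1)\varepsilon_1}=\pi_{k+1,0}$; thus the $p=1$ hypothesis furnishes $\dim V_{\pi_{j\varepsilon_1}}^\Gamma=\dim V_{\pi_{j\varepsilon_1}}^{\Gamma'}$ for every $j\geq1$, while the case $j=0$ is trivial because $\pi_0$ is the trivial representation and its $\Gamma$-invariants are always one-dimensional. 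Part (i) applied to $p=0$ then yields $0$-isospectrality. Finally, because spherical space forms are orientable, Hodge duality exchanges $p$-spectra with $(2m-1-p)$-spectra, so isospectrality on levels $0,1,\dots,m-1$ propagates to all $0\leq p\leq 2m-1$.

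The main subtlety is the recovery of the $p=0$ condition from the $p\geq 1$ hypothesis in part (ii), which rests on the identification $\pi_{k,1}=\pi_{k+1,0}$; everything else amounts to careful bookkeeping with Proposition~\ref{prop2:p-spectrum} together with the disjointness $\mathcal E_p\cap\mathcal E_{p+1}=\emptyset$ and the strict monotonicity of $k\mapsto \lambda_{k,p}$.
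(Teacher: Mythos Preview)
Your proof is correct and follows essentially the same route as the paper's: both derive (i) directly from Proposition~\ref{prop2:p-spectrum} together with the disjointness $\mathcal E_p\cap\mathcal E_{p+1}=\emptyset$, and both handle the converse in (ii) via the identification $\pi_{k,1}=\pi_{(k+1)\varepsilon_1}$ to recover the $p=0$ condition. You simply make explicit a few points the paper leaves implicit (the monotonicity of $k\mapsto\lambda_{k,p}$ and the appeal to Hodge duality for $p\geq m$).
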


\begin{proof}
By Proposition~\ref{prop2:p-spectrum}, if $\lambda\in\R$ is an eigenvalue of $\Delta_{p,\Gamma}$ then $\lambda\in \mathcal E_p\cup \mathcal E_{p+1}$ for some $k\in\N$ and its multiplicity is $\dim V_{\pi_{k,p}}^{\Gamma}$ or $\dim V_{\pi_{k,p+1}}^{\Gamma}$ depending on whether $\lambda$ is in $\mathcal E_p$ or in $\mathcal E_{p+1}$.
Since $\mathcal E_p\cap \mathcal E_{p+1}$ is empty, then (i) follows.
Note that $\pi_{k,0}$ and $\pi_{k,1}$ are the irreducible representations of $\SO(2m)$ with highest weight $k\varepsilon_1$ and $(k+1)\varepsilon_1$ respectively.

Item (ii) follows from (i) since $p$-isospectrality for $0\leq p\leq m-1$ implies $p$-isospectrality for every $p$.
\end{proof}

\section{Isospectrality conditions for lens spaces}
This section contains the first main result in this paper that gives a characterization of  pairs of lens spaces that are either $0$-isospectral or  $p$-isospectral for every  $p$ (see Theorem~\ref{thm3:characterization}) in terms of geometric properties of their associated lattices.

Odd dimensional lens spaces can be described as follows: for each $q\in\N$ and $s_1,\dots,s_m\in\Z$ coprime to $q$,  denote
\begin{equation}\label{eq3:L(q;s)}
L(q;s_1,\dots,s_m) = \langle\gamma\rangle \ba S^{2m-1}
\end{equation}
where
\begin{equation}\label{eq3:gamma}
\gamma=
\diag\left(
\left[\begin{smallmatrix}\cos(2\pi{s_1}/q)&-\sin(2\pi{s_1}/q) \\ \sin(2\pi{s_1}/q)&\cos(2\pi{s_1}/q)
\end{smallmatrix}\right]
,\dots,
\left[\begin{smallmatrix}\cos(2\pi{s_m}/q)&-\sin(2\pi{s_m}/q) \\ \sin(2\pi{s_m}/q)&\cos(2\pi{s_m}/q)
\end{smallmatrix}\right]
\right)
\end{equation}
The element $\gamma$ generates a cyclic group of order $q$ in $\SO(2m)$ that acts freely on $S^{2m-1}$.
Sometimes we shall abbreviate $L(q;\bs)$ in place of $L(q; s_1,\dots,s_m)$, where $\bs$ stands for the vector $\bs=(s_1,\dots,s_m)\in\Z^m$.
The following fact is well known (see \cite[Ch.~V]{Co} or \cite[\S12]{Mil2}).

\begin{prop}\label{prop3:lens-isom}
Let $L=L(q;\bs)$ and $L'=L(q;\bs')$ be lens spaces.
Then the following assertions are equivalent.
\begin{enumerate}
  \item $L$ is isometric to $L'$.
  \item $L$ is diffeomorphic to $L'$.
  \item $L$ is homeomorphic to $L'$.
  \item There exist $\sigma$ a permutation of $\{1,\dots,m\}$, $\epsilon_1,\dots,\epsilon_m\in\{\pm1\}$ and $t\in\Z$ coprime to $q$ such that
  $$
  s_{\sigma(j)}'\equiv t\epsilon_js_j\pmod{q}
  $$
  for all $1\leq j\leq m$.
\end{enumerate}
\end{prop}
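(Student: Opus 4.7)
The plan is to close the loop $(1)\Rightarrow(2)\Rightarrow(3)\Rightarrow(4)\Rightarrow(1)$. The first two implications are tautological, since an isometry of Riemannian manifolds is automatically a diffeomorphism and a diffeomorphism is automatically a homeomorphism; the content of the proposition sits in the remaining two arrows.

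For $(4)\Rightarrow(1)$, I would realize each of the three elementary operations on the tuple $(s_1,\dots,s_m)$ by an explicit element of $\Ot(2m)$. First, multiplying by an integer $t$ coprime to $q$ replaces the generator $\gamma$ by $\gamma^t$, which generates the same cyclic subgroup, so the lens spaces literally coincide as quotients of $S^{2m-1}$. Second, a sign change $s_j\mapsto \epsilon_j s_j$ in the $j$-th block is realized by conjugation by $\diag(J_1,\dots,J_m)$, where $J_i=I_2$ if $\epsilon_i=1$ and $J_i=\diag(1,-1)$ if $\epsilon_i=-1$, exploiting the identity $\diag(1,-1)\,R(\ww)\,\diag(1,-1)=R(-\ww)$ for the $2\times 2$ rotation matrix $R(\ww)$. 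Third, a permutation $\sigma$ of the blocks is implemented by the corresponding block permutation matrix in $\Ot(2m)$. The composition of these three matrices lies in $\Ot(2m)$, therefore acts by a (possibly orientation-reversing) isometry of $S^{2m-1}$, and by construction conjugates $\langle\gamma\rangle$ onto $\langle\gamma'\rangle$; it therefore descends to a Riemannian isometry $L\to L'$.

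The hard implication is $(3)\Rightarrow(4)$, which is the classical homeomorphism classification of lens spaces. Here I would appeal to Reidemeister--Franz torsion, as developed in \cite[Ch.~V]{Co} and \cite[\S12]{Mil2}. This is a topological invariant of $L(q;\bs)$ that, for every non-trivial character $\chi\colon\Z/q\Z\to\C^\times$, evaluates essentially to $\prod_{j=1}^m(\chi(s_j)-1)$ modulo an explicit group of ambiguities (coming from the choice of generator of $\pi_1$ and of orientation). A homeomorphism $L\cong L'$ forces equality of these values for all $\chi$, and the main algebraic task is to deduce from this system of equalities in $\C^\times$ the combinatorial statement in (4), namely that the multisets $\{s_j\}$ and $\{s_j'\}$ in $(\Z/q\Z)^\times$ differ by a unit $t$, signs $\epsilon_j$ and a permutation $\sigma$. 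This extraction, which relies on the structure of cyclotomic units, is the main obstacle in a self-contained proof, and is precisely the content of the references cited; my proposal is to quote the classification theorem directly rather than reprove the torsion computation.
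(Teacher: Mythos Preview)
Your proposal is correct and in fact more detailed than what the paper does: the paper gives no proof at all, simply labeling the proposition as ``well known'' and citing \cite[Ch.~V]{Co} and \cite[\S12]{Mil2}. Your argument follows exactly this line---trivial implications $(1)\Rightarrow(2)\Rightarrow(3)$, an explicit construction for $(4)\Rightarrow(1)$, and the Reidemeister--Franz torsion classification (from those same two references) for $(3)\Rightarrow(4)$---so the approaches coincide, with yours supplying the routine details the paper omits.
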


The next definition will play a main role in the rest of this paper.

\begin{defi}
Let $q\in\N$ and $\bs=(s_1, \ldots, s_m) \in\Z^m$ such that each entry $s_j$ is coprime to $q$.
We associate to the lens space $L(q;\bs)$ the \emph{congruence lattice }
\begin{equation}\label{eq3:Lambda(q;s)}
\mathcal L(q;s_1,\dots,s_m) =\{(a_1,\dots,a_m)\in\Z^m: a_1s_1+\dots+a_ms_m\equiv 0\pmod q\}.
\end{equation}
\end{defi}

For $\mu=(a_1,\dots,a_m)\in\Z^m$, we set $\norma{\mu}=\sum_{j=1}^m |a_j|$.

\begin{prop}\label{prop3:isometrias}
Let $L(q;\bs)$, $L(q;\bs')$ be lens spaces with $\mathcal L(q,\bs)$ and $\mathcal L(q,\bs')$ the associated lattices.
Then, $L(q;\bs)$ and $L(q;\bs')$ are isometric if and only if $\mathcal L(q;\bs)$ and $\mathcal L(q;\bs')$ are $\norma{\cdot}$-isometric.
\end{prop}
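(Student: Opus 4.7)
The plan is to prove each implication directly, translating between the lens-space parameters and the associated lattices using Proposition~\ref{prop3:lens-isom} on one side and the classical fact that every linear $\norma{\cdot}$-isometry of $\R^m$ is a signed permutation matrix on the other.

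For the forward direction, suppose $L(q;\bs)$ and $L(q;\bs')$ are isometric. Proposition~\ref{prop3:lens-isom} supplies $\sigma\in S_m$, signs $\epsilon_j\in\{\pm1\}$, and $t$ coprime to $q$ with $s'_{\sigma(j)}\equiv t\epsilon_j s_j\pmod q$. I would define $T\colon\Z^m\to\Z^m$ by sending $(a_1,\dots,a_m)$ to the vector whose $\sigma(j)$-th coordinate is $\epsilon_j a_j$; as a signed permutation it is automatically a $\norma{\cdot}$-isometry of $\Z^m$. A direct substitution yields
\[
\sum\nolimits_j (\epsilon_j a_j)\,s'_{\sigma(j)} \;\equiv\; t\sum\nolimits_j a_j s_j \pmod q,
\]
and since $t$ is a unit modulo $q$, $T$ restricts to a bijection $\mathcal L(q;\bs)\to\mathcal L(q;\bs')$.

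For the converse, let $T\colon\mathcal L(q;\bs)\to\mathcal L(q;\bs')$ be a $\Z$-linear bijection preserving $\norma{\cdot}$. Both lattices have full rank in $\R^m$ (they are sublattices of $\Z^m$ of index $q$), so $T$ extends uniquely to an $\R$-linear automorphism $\tilde T\colon\R^m\to\R^m$. The crucial step is to upgrade $\tilde T$ to a $\norma{\cdot}$-isometry of $\R^m$: for any $v\in\Q^m$, the index-$q$ property furnishes $N\in\N$ with $Nv\in\mathcal L(q;\bs)$, hence $\norma{\tilde T(v)}=\tfrac{1}{N}\norma{\tilde T(Nv)}=\tfrac{1}{N}\norma{Nv}=\norma{v}$, and continuity extends this to $\R^m$. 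Since the extreme points of the $\norma{\cdot}$-unit ball are exactly $\pm e_1,\dots,\pm e_m$, the invertible map $\tilde T$ must permute them; therefore $\tilde T(e_j)=\epsilon_j e_{\sigma(j)}$ for some permutation $\sigma\in S_m$ and signs $\epsilon_j\in\{\pm1\}$.

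To finish I would translate $\tilde T(\mathcal L(q;\bs))=\mathcal L(q;\bs')$ back into a parameter condition. The surjective homomorphisms $\Z^m\to\Z/q\Z$ given by $(a_j)\mapsto\sum a_j s_j$ and $(a_j)\mapsto\sum a_j\,\epsilon_j s'_{\sigma(j)}$ share the same kernel $\mathcal L(q;\bs)$, so they differ by an automorphism of $\Z/q\Z$: there exists a unit $t\in(\Z/q\Z)^{\ast}$ with $\epsilon_j s'_{\sigma(j)}\equiv t s_j\pmod q$ for every $j$, which is exactly condition~(4) of Proposition~\ref{prop3:lens-isom}. The main technical hurdle is the extension step for $\tilde T$: $\Z$-linear norm preservation on a sublattice does not of itself imply norm preservation on all of $\R^m$, and the finite-index observation is precisely what bridges this gap by supplying enough rational directions.
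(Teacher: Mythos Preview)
Your argument is correct and follows the same two-step outline as the paper (reduce to a signed permutation, then recover the unit $t$), but the technical execution differs in two places. First, for the classification of $\norma{\cdot}$-isometries: the paper works directly on $\R^m$ with a short triangle-inequality argument (showing each $T(\varepsilon_k)$ has a single nonzero entry), while you instead justify the extension from the sublattice to $\R^m$ via the finite-index observation and then invoke the extreme points of the $\ell^1$-ball. Your route is more explicit about why a norm-preserving map on a proper sublattice controls all of $\R^m$; the paper tacitly assumes the isometry is already defined on $\R^m$. Second, to recover $t$: the paper plugs in the concrete vectors $(-s_j',0,\dots,s_1',\dots,0)\in\mathcal L(q;\bs')$ and reads off the congruences directly, whereas you use the cleaner module-theoretic observation that two surjections $\Z^m\twoheadrightarrow\Z/q\Z$ with the same kernel differ by a unit. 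Both approaches are equally valid; yours is slightly more conceptual, the paper's slightly more hands-on.
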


\begin{proof}
By Proposition~\ref{prop3:lens-isom}, $L$ and $L'$ are isometric if and only if there exist $t$ coprime to $q$ and $\varphi$, a composition of permutations and changes of signs, such that $\varphi(t\bs)=\varphi(t s_1,\dots, t s_m)= (s_1', \ldots, s'_m)=\bs'$.
Hence $\mathcal L(q,\bs')=\mathcal L(q,\varphi (\bs))=\varphi (\mathcal L(q,\bs))$ with $\varphi$ a $\norma{\cdot}$-isometry.

In order to prove the converse assertion, we first show that every $\norma\cdot$-linear isometry of $\R^n$ is a composition of permutations and changes of signs.
If $T$ is a $\norma{\cdot}$-linear isometry of $\R^n$, then for each $1\le k \le  n$,  $T(\varepsilon_k)= \sum_{j=1}^n c_{k,j} \varepsilon_j$ with $\sum |c_{k,j}| =1$. We claim that $c_{k,j}\ne 0$ for at most one value of $j$. Otherwise, there are $h,k,\ell$ such that $c_{k,\ell}c_{h,\ell}\ne 0$. Hence $|c_{k,\ell} +\delta c_{h,\ell}|< |c_{k,\ell}| + |c_{h,\ell}|$, for $\delta =1$ or $\delta =-1$.
Thus, for this choice of $\delta$ we have $2 = \norma{T(\varepsilon_k) +\delta  T(\varepsilon_h)} = \sum_{j=1}^n |c_{k,j} + \delta c_{k,j}|<
\sum_{j=1}^n |c_{k,j}| + |c_{k,j}|=2$, a contradiction.

Now, suppose conversely that $\varphi$ is a $\norma{\cdot}$-isometry between $\mathcal L(q,s)$ and $\mathcal L(q,s')$.
The previous paragraph ensures that $\varphi$ is given by
$$
\varphi(a_1,\dots,a_m)=(\epsilon_{\sigma(1)} a_{\sigma(1)},\dots, \epsilon_{\sigma(m)} a_{\sigma(m)})
$$
with $\sigma$ a permutation of $\{1,\dots,m\}$ and $\epsilon_j=\pm1$ for all $j$, and satisfies $\mathcal L(q;\bs')= \varphi(\mathcal L(q;\bs))$, thus $\mathcal L(q;\bs')= \mathcal L(q;\varphi(\bs))$.
For each $2\leq j\leq m$, the vector $$(-s_j',0,\dots,0,s_1',0,\dots,0)$$ lies in $\mathcal L(q;\bs')$, thus $-s_j' \epsilon_{\sigma(1)} s_{\sigma(1)} + s_1' \epsilon_{\sigma(j)} s_{\sigma(j)}\equiv 0 \pmod q$ since it is also in $\mathcal L(q;\varphi(\bs))$.
Then, if $t\in\Z$ is such that $ t\epsilon_{\sigma(1)} s_{\sigma(1)}\equiv s_1'\pmod q$, one has that $s_j' \equiv t \epsilon_{\sigma(j)} s_{\sigma(j)} \pmod q$ for every $j$.
Hence $L$ and $L'$ are isometric to each other.
This completes the proof.
\end{proof}

The goal of this section is to write the $p$-spectrum of a lens space in terms of the $\norma{\cdot}$-length spectrum of the associated congruence lattice.
To do this we will express the numbers $\dim V_{\pi_{k,p}}^\Gamma$ in terms of weight multiplicities of representations of $G=\SO(2m)$.
We will identify the weight lattice $P(G) = \bigoplus_{j=1}^m \Z \varepsilon_j$ with $\Z^m$  via the correspondence $\sum_{j=1}^m a_j\varepsilon_j$ $\mapsto$ $(a_1,\dots,a_m)$.

\begin{lemma}\label{lem3:L_Gamma}
Let $\Gamma =\langle \gamma \rangle$  where $\gamma$ is as in \eqref{eq3:gamma}. Let $L = L(q;s_1,\dots,s_m)$ be the corresponding lens space and let $\mathcal L=\mathcal L(q;s_1,\dots,s_m)$ be the associated lattice.
If  $(\pi,V_\pi)$  is a finite dimensional representation of $\SO(2m)$, then
\begin{equation}\label{eq:Gammainvar}
\dim V_\pi^\Gamma=
\sum_{\mu\in \mathcal L}\, m_\pi(\mu),
\end{equation}
where $m_\pi(\mu)$ denotes the multiplicity of the weight $\mu$ in $\pi$.
\end{lemma}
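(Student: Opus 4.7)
The plan is to diagonalize the action of $\gamma$ on $V_\pi$ using the weight space decomposition under the maximal torus $T$ fixed in Section~\ref{sec:prelim}, and then read off the invariants weight-by-weight.

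First I would observe that $\gamma \in T$: indeed, comparing \eqref{eq3:gamma} with the parametrization of $T$, we have $\gamma = \exp(H_0)$ where $H_0 \in \mathfrak h_0$ corresponds to $\theta_j = s_j/q$ in \eqref{eq2:h_0}. Since $\Gamma = \langle\gamma\rangle$ lies in $T$, every element of $\Gamma$ acts semisimply on $V_\pi$, and we may decompose
\begin{equation*}
V_\pi = \bigoplus_{\mu \in P(G)} V_\pi(\mu),
\end{equation*}
where $V_\pi(\mu)$ is the $\mu$-weight space, of dimension $m_\pi(\mu)$.

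Next I would compute the scalar by which $\gamma$ acts on $V_\pi(\mu)$ for a weight $\mu = \sum_{j=1}^m a_j \varepsilon_j$. By definition of the weight, and using $\varepsilon_j(H) = 2\pi i \theta_j$ from the setup, $\gamma = \exp(H_0)$ acts on $V_\pi(\mu)$ by
\begin{equation*}
\exp\bigl(\mu(H_0)\bigr) = \exp\!\left(2\pi i \sum_{j=1}^m a_j \tfrac{s_j}{q}\right).
\end{equation*}
A weight vector $v \in V_\pi(\mu)$ is therefore fixed by $\gamma$ (equivalently by the whole cyclic group $\Gamma$) if and only if $a_1 s_1 + \dots + a_m s_m \equiv 0 \pmod q$, i.e. if and only if $\mu \in \mathcal L$ under the identification $P(G) \cong \Z^m$.

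Consequently $V_\pi^\Gamma = \bigoplus_{\mu \in \mathcal L} V_\pi(\mu)$, and taking dimensions yields \eqref{eq:Gammainvar}. There is essentially no obstacle here; the only small point to keep straight is the factor $2\pi i$ arising from the normalization chosen in \eqref{eq2:h_0} so that the exponential of $H_0$ lands on $\gamma$ with the correct entries $s_j/q$, which is exactly what makes the congruence $\sum a_j s_j \equiv 0 \pmod q$ the invariance condition.
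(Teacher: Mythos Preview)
Your proof is correct and follows essentially the same approach as the paper's: decompose $V_\pi$ into weight spaces under $T$, compute the scalar $e^{\mu(H_\gamma)}$ by which $\gamma$ acts on $V_\pi(\mu)$, and observe that this scalar equals $1$ precisely when $\sum_j a_j s_j \equiv 0 \pmod q$, i.e.\ when $\mu\in\mathcal L$. The only cosmetic difference is a sign in the exponent (the paper writes $e^{-2\pi i(\cdots)}$), which of course does not affect the invariance condition.
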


\begin{proof}
One has that $V_\pi=\oplus_{\mu\in P(G)} V_\pi(\mu)$, where $V_\pi(\mu)$ is the $\mu$-weight space, i.e.\ the space of vectors $v$ such that $\pi(h)v=h^\mu v$ for every $h\in T$.
Here, $h^\mu=e^{\mu(X_h)}$ where $X_h$ is any element in $\mathfrak h_0$ satisfying $\exp(X_h)=h$.
Thus, $V_\pi^\Gamma= \oplus_{\mu\in P(G)} V_\pi(\mu)^\Gamma$.
Now, $v\in V_\pi(\mu)$, $v\ne 0$, is $\Gamma$-invariant if and only if $\gamma^\mu=1$, hence
$
\dim V_\pi^\Gamma =\sum_{\mu:\gamma^\mu=1} \dim V_{\pi}(\mu)=\sum_{\mu:\gamma^\mu=1} m_\pi(\mu).
$

We let
$$
H_\gamma=\diag\left( \left(\begin{smallmatrix}0&-2\pi s_1/q\\ 2\pi s_1/q&0\end{smallmatrix}\right) ,\dots, \left(\begin{smallmatrix}0&-2\pi s_m /q\\ 2\pi s_m/q&0\end{smallmatrix}\right) \right),
$$
thus $\exp (H_\gamma)=\gamma$.
If $\mu=\sum_{j=1}^{m} a_j\varepsilon_j\in P(\SO(2m))$ then
$$
\gamma^\mu = e^{\mu(H_\gamma)} = e^{-2\pi i\left(\frac{a_1s_1+\dots+a_ms_m}{q}\right)}=1
$$
if and only if $a_1s_1+\dots+a_ms_m\equiv 0\pmod q$, that is, $\mu \in \mathcal L$.
\end{proof}

Let $\mathcal L$ be an arbitrary sublattice of $\Z^m$. For $\mu\in\Z^m$ we set $Z(\mu)=\#\{j:1\leq j\leq m,\, a_j=0\}$.
We denote, for any $0\leq \zz \leq m$ and any $k\in\N_0:=\N\cup\{0\}$,
\begin{align}
N_\mathcal L(k)  &= \#\left\{\mu\in \mathcal L : \norma{\mu}=k\right\},\label{eq:Nk}\\
N_{\mathcal L}(k,\zz ) &= \#\left\{\mu\in \mathcal L : \norma{\mu}=k,\; Z(\mu)=\zz \right\}.\label{eq:Nkz}
\end{align}

\begin{defi}\label{def3:isosp-latt}
Let $\mathcal L$ and $\mathcal L'$ be sublattices of $\Z^m$.
\begin{enumerate}
\item[(i)] $\mathcal L$ and $\mathcal L'$ are said to be \emph{$\norma{\cdot}$-isospectral} if $N_{\mathcal L}(k) = N_{\mathcal L'}(k)$ for every $k\in\N$.
\item[(ii)]
 $\mathcal L$ and $\mathcal L'$ are said to be \emph{$\norma{\cdot}^*$-isospectral} if $N_{\mathcal L} (k,\zz ) = N_{\mathcal L'}(k,\zz )$ for every $k\in\N$ and every $0\leq \zz \leq m$.
\end{enumerate}
\end{defi}

We will need two useful lemmas on weight multiplicities.
The first one follows from well known facts, but we could not find it stated in the form below, so we include a short proof here.
Recall that $\Lambda_p$ is given by \eqref{eq2:Lambda_p} and $ \pi_{0,p} = \pi_{\Lambda_p}$ is the exterior representation of $\SO(2m)$ on $\bigwedge^p \C^{2m}$ for $0 \le p \le m$.

\begin{lemma}\label{lem3:mult-k=1-p=1}
Let $k\in\N$ and $0\leq p\leq m$.
If $\mu=\sum_{j=1}^m a_j\varepsilon_m\in\Z^m$ we have
\begin{align}
m_{\pi_{k\varepsilon_1}}(\mu) =&
\begin{cases}
\binom{r+m-2}{m-2} & \text{ if }\, \norma{\mu}=k-2r \;\text{ with } r\in \N_0,\\
0 & \text{ otherwise,}
\end{cases}
\label{eq3:mult-pi_k,1} \\
m_{\pi_{\Lambda_{p}}}(\mu) =&
\begin{cases}
\binom{m-p+2r}{r} & \text{if }\,\norma{\mu}=p-2r  \;\text{ with } r\in \N_0,  \text{ and }  |a_j|\leq1\;\forall\,j,\\
0&\text{otherwise.}
\end{cases}
\label{eq3:mult-pi_1,p}
\end{align}
\end{lemma}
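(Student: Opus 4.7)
The plan is to realize both representations on concrete geometric models whose weights are transparent. I would realize $\pi_{k\varepsilon_1}$ on the space $\mathcal H^k$ of degree-$k$ harmonic polynomials on $\R^{2m}$, and $\pi_{\Lambda_p}$ on $\bigwedge^{p}\C^{2m}$. Recall that for $0\le p<m$ this exterior power is irreducible of highest weight $\Lambda_p$, while for $p=m$ it decomposes as $V_{\Lambda_m}\oplus V_{\overline\Lambda_m}$, which is exactly the $\pi_{\Lambda_m}$ defined in the paper; thus weight multiplicities in $\pi_{\Lambda_p}$ coincide with those in $\bigwedge^p\C^{2m}$ for all $0\le p\le m$. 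On both modules the torus $T$ acts diagonally in the basis $f_j=e_{2j-1}+ie_{2j}$, $\bar f_j=e_{2j-1}-ie_{2j}$ of weights $\varepsilon_j$ and $-\varepsilon_j$.

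For \eqref{eq3:mult-pi_k,1}, I would first compute multiplicities in the whole symmetric power $\mathcal P^k:=\operatorname{Sym}^k(\C^{2m\,*})$. A weight $\mu=\sum a_j\varepsilon_j$ is realized by monomials $\prod z_j^{b_j}\bar z_j^{c_j}$ with $b_j-c_j=a_j$ and $\sum(b_j+c_j)=k$; substituting $b_j+c_j=|a_j|+2x_j$ with $x_j\ge 0$ reduces the count to non-negative solutions of $\sum x_j=r$ when $\norma{\mu}=k-2r$, giving $m_{\mathcal P^k}(\mu)=\binom{r+m-1}{m-1}$. The classical decomposition $\mathcal P^k=\mathcal H^k\oplus |x|^2\mathcal P^{k-2}$ as $\SO(2m)$-modules (with $|x|^2\mathcal P^{k-2}\cong\mathcal P^{k-2}$) then yields
\[
m_{\mathcal H^k}(\mu)=m_{\mathcal P^k}(\mu)-m_{\mathcal P^{k-2}}(\mu)=\binom{r+m-1}{m-1}-\binom{r+m-2}{m-1}=\binom{r+m-2}{m-2},
\]
by Pascal's identity, which is \eqref{eq3:mult-pi_k,1}; the case $r=0$ gives the correct answer $1$ with the convention that $\binom{r+m-2}{m-1}=0$ when $r=0$.

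For \eqref{eq3:mult-pi_1,p}, I would argue directly on $\bigwedge^p\C^{2m}$: a basis is indexed by subsets of $\{\varepsilon_1,-\varepsilon_1,\dots,\varepsilon_m,-\varepsilon_m\}$ of cardinality $p$, each of weight equal to the sum of its elements. Writing $b_j,c_j\in\{0,1\}$ for the indicators of including $\varepsilon_j,-\varepsilon_j$, the conditions $b_j-c_j=a_j$ and $\sum(b_j+c_j)=p$ force $|a_j|\le 1$; at each index with $a_j=\pm 1$ the pair $(b_j,c_j)$ is determined, while at each of the $Z(\mu)=m-\norma{\mu}$ indices with $a_j=0$ there is a binary choice between $(0,0)$ and $(1,1)$, the latter contributing $2$ to $\sum(b_j+c_j)$. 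With $\norma{\mu}=p-2r$ one must pick exactly $r$ such indices, which can be done in $\binom{m-p+2r}{r}$ ways. The computations are straightforward; the only bookkeeping care is to respect the parity constraints ($\norma{\mu}\equiv k\pmod 2$, resp.\ $\norma{\mu}\equiv p\pmod 2$) and the boundary values $r=0$, $p=0$, $p=m$, all of which are covered uniformly by the two formulas.
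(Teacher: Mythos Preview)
Your proposal is correct and follows essentially the same approach as the paper: both realize $\pi_{k\varepsilon_1}$ on harmonic polynomials via the decomposition $\mathcal P_k\simeq\mathcal H_k\oplus\mathcal P_{k-2}$ and count monomial weight vectors in $\mathcal P_k$, and both realize $\pi_{\Lambda_p}$ on $\bigwedge^p\C^{2m}$ and count wedge basis vectors of a given weight. Your presentation is slightly more compact (explicit use of Pascal's identity, indicator variables $b_j,c_j$, and an explicit remark on the $p=m$ case), but the underlying argument is identical.
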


\begin{proof}
It is well known that the representation $\pi_{k\varepsilon_1}$ can be realized in the space of harmonic homogeneous polynomials $\mathcal H_k$ of degree $k$ in $m$ variables.
Moreover, $\mathcal P_k\simeq\mathcal H_k\oplus\mathcal P_{k-2}$ where $\mathcal P_k$ denotes the space of homogeneous polynomials of degree $k$, thus
\begin{equation}\label{eq3:H_k=P_k+r^2P_k-2}
m_{\pi_{k\varepsilon_1}}(\mu) = m_{{\mathcal P}_k}(\mu) - m_{{\mathcal P}_{k-2}}(\mu).
\end{equation}

In order to find the weights of $\mathcal P_k$, we set $f_j(x)=x_{2j-1}+ix_{2j},\, f_{j+m}=x_{2j-1}-ix_{2j}\in\mathcal P_1$ for each $1\leq j\leq m$.
It can be easily seen that the polynomials $f_1^{l_1}\dots f_{2m}^{l_{2m}}$ with $\sum_{j=1}^{2m} l_j=k$ form a basis of $\mathcal P_k$ given by weight vectors.
Indeed, $h\in T$ acts on  $f_1^{l_1}\dots f_{2m}^{l_{2m}}$ by multiplication by $h^\mu$ where $\mu=\sum_{j=1}^m (l_j-l_{j+m})\varepsilon_j$.
It follows that   $\mu=\sum_{j=1}^m a_j\varepsilon_j\in\Z^m$ is a weight of $\mathcal P_k$ if and only if there are $l_1,\dots,l_{2m}\in\N_0$ such that $a_j=l_j-l_{j+m}$ and $\sum_{j=1}^{2m} l_j=k$.
Furthermore, one checks that the last condition is equivalent to $k-\norma{\mu}=2r$ with $r\in \N_0$. Hence, $m_{\mathcal P_k}(\mu)$ equals the number of different ways one can write $r$ as an ordered sum of $m$ different nonnegative integers, which equals $\binom{r+m-1}{m-1}$.
This implies that
\begin{equation*}\label{eq3:mult-P_k}
m_{\mathcal P_k}(\mu) =
\begin{cases}
\binom{r+m-1}{m-1} &\text{ if }r=\frac12(k-\norma{\mu})\in\N_0,\\
0 & \text{ otherwise.}
\end{cases}
\end{equation*}
This formula and \eqref{eq3:H_k=P_k+r^2P_k-2} imply \eqref{eq3:mult-pi_k,1}.

We now prove the second assertion.
The representation $\pi_{\Lambda_p}$ can be realized as  the complexified $p$-exterior representation $\bigwedge^p(\C^{2m})$ with the canonical action  of $\SO(2m)$.
Let $\{e_1,\dots,e_{2m}\}$ denote the canonical basis of $\C^{2m}$.
For $1\leq j\leq m$, we set $v_j=e_{2j-1}-i e_{2j}$ and $v_{j+m}=e_{2j-1}+i e_{2j}$.
Hence $\{v_1,\dots,v_{2m}\}$ is also a basis of $\C^{2m}$ and
\begin{equation}\label{eq5:base_p-forms}
\left\{v_{i_1}\wedge\dots\wedge v_{i_p}: 1\leq i_1<i_2<\dots<i_p\leq 2m\right\}
\end{equation}
is a basis of $\bigwedge^p(\C^{2m})$.
For $I=\{1\leq i_1<i_2<\dots<i_p\leq 2m\}$ we write $\omega_I=v_{i_1}\wedge\dots\wedge v_{i_p}$.

One can check that $h\in T$ acts on $\omega_I$ by multiplication by $h^\mu$ where
$\mu=\sum_{j=1}^m a_j\varepsilon_j$ is given by
$$
a_j=\begin{cases}
 1&\quad\text{if $j\in I$ and $j+m\notin I$,}\\
-1&\quad\text{if $j\notin I$ and $j+m\in I$,}\\
0&\quad\text{if both $j,j+m\in I$, or $j,j+m\notin I$.}
\end{cases}
$$
Thus, an arbitrary element $\mu=\sum_j a_j\varepsilon_j\in\Z^m$ is a weight of $\bigwedge^p(\C^{2m})$ if and only if $|a_j|\leq 1$ for all $j$ and $p-\norma{\mu}\in2\N_0$.

Let $\mu=\sum_{j=1}^m a_j\varepsilon_j\in\Z^m$ be such that $|a_j|\leq 1$ for all $j$ and $r=\frac12(p-\norma{\mu})\in\N_0$.
Let $I_\mu=\{i:1\leq i\leq m, \,  a_i=1\}\cup\{i:m+1\leq i\leq 2m,\, a_{i-m}=-1\}$.
Thus $I_\mu$ has $p-2r$ elements.
It is a simple matter to check that $\omega_I$ is a weight vector with weight $\mu$ if and only if $I$ has $p$ elements, $I_\mu\subset I$ and $I$ has the property  that $j\in I\smallsetminus I_\mu \iff j+m\in I\smallsetminus I_\mu$ for $1\leq j\leq m$.
One can check that there are $\binom{m-p+2r}{r}$ choices for $I$, hence the claim follows.
\end{proof}

The second lemma is crucial in the proof of Theorem~\ref{thm3:characterization}~(ii).
We recall that $\pi_{k,p}$ is the irreducible representation of $\SO(2m)$ with highest weight $k\varepsilon_1+\Lambda_p$ if $p<m$ and, when $p=m$, the sum of the irreducible representations with highest weights $k\varepsilon_1+\Lambda_m$ and $k\varepsilon_1+\overline\Lambda_m$.

\begin{lemma}\label{lem3:mult-k-p}
Let $\mu,\mu'\in P(\SO(2m))\simeq \Z^m$.
If $\norma{\mu}=\norma{\mu'}$ and $Z(\mu)=Z(\mu')$ then $m_{\pi_{k,p}}(\mu) = m_{\pi_{k,p}}(\mu')$ for every $k\in\N$ and every $1\leq p\leq m$.
\end{lemma}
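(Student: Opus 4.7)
The plan is to realize $\pi_{k,p}$ inside the tensor product $\pi_{k\varepsilon_1}\otimes \pi_{\Lambda_p}$, whose weight multiplicities can be written as explicit convolutions of the formulas in Lemma~\ref{lem3:mult-k=1-p=1}, and then extract the desired information by an induction on $k+p$.

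First I would upgrade the Weyl-group invariance of $m_{\pi_{k,p}}$ (invariance under permutations and \emph{even} sign changes of coordinates) to full hyperoctahedral invariance, i.e.\ also under arbitrary sign changes. The extra generator $\varepsilon_m\mapsto -\varepsilon_m$ is implemented by the outer automorphism of $\SO(2m)$ coming from $\Ot(2m)$. For $p<m$ this automorphism fixes the highest weight $k\varepsilon_1+\Lambda_p$, hence fixes the representation $\pi_{k,p}$; for $p=m$ it exchanges the two irreducible summands of $\pi_{k,m}=\pi_{k\varepsilon_1+\Lambda_m}\oplus \pi_{k\varepsilon_1+\overline{\Lambda}_m}$, so the character is again invariant. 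Consequently $m_{\pi_{k,p}}(\mu)$ depends only on the multiset $\{|a_1|,\dots,|a_m|\}$, and I may reduce to the case $a_j\geq 0$ for all $j$.

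For such $\mu$, I would compute the convolution $m_{\pi_{k\varepsilon_1}\otimes\pi_{\Lambda_p}}(\mu)=\sum_\nu m_{\pi_{k\varepsilon_1}}(\mu-\nu)\, m_{\pi_{\Lambda_p}}(\nu)$ directly from Lemma~\ref{lem3:mult-k=1-p=1}. Only $\nu\in\{-1,0,1\}^m$ contributes, and for such $\nu$ one has $|a_j-\nu_j|=a_j-\nu_j$ when $a_j>0$ and $|a_j-\nu_j|=|\nu_j|$ when $a_j=0$. Splitting $\nu$ according to its support on $P_\mu=\{j:a_j>0\}$ and $Z_\mu=\{j:a_j=0\}$, and classifying contributions by the three integers $A=\#\{j\in P_\mu:\nu_j\neq 0\}$, $X=\sum_{j\in P_\mu}\nu_j$, and $Y=\#\{j\in Z_\mu:\nu_j\neq 0\}$, the convolution becomes
\begin{equation*}
m_{\pi_{k\varepsilon_1}\otimes \pi_{\Lambda_p}}(\mu)=\sum_{A,X,Y}\binom{|P_\mu|}{A}\binom{A}{(A+X)/2}\binom{|Z_\mu|}{Y}\,2^{Y}\binom{m-p+2s}{s}\binom{r+m-2}{m-2},
\end{equation*}
with $s=(p-A-Y)/2$ and $r=(k-\norma{\mu}+X-Y)/2$. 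Since $|P_\mu|=m-Z(\mu)$ and $|Z_\mu|=Z(\mu)$, this sum depends only on $\norma{\mu}$ and $Z(\mu)$.

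Finally I would decompose $\pi_{k\varepsilon_1}\otimes \pi_{\Lambda_p}$ into irreducibles of $\SO(2m)$ via a Pieri-type rule and conclude by induction on $k+p$: all other summands $\pi_{k_j,p_j}$ appearing satisfy $k_j+p_j<k+p$, with base cases handled directly by Lemma~\ref{lem3:mult-k=1-p=1} (e.g.\ $\pi_{k,1}=\pi_{(k+1)\varepsilon_1}$ and $\pi_{0,p}=\pi_{\Lambda_p}$). Subtracting the inductively known contributions from the tensor product multiplicity then isolates $m_{\pi_{k,p}}(\mu)$ and shows it depends only on $\norma{\mu}$ and $Z(\mu)$. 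The main obstacle is to pin down the Pieri decomposition precisely for type $D_m$ and to handle uniformly the case $p=m$, where one should treat $\pi_{k\varepsilon_1+\Lambda_m}\oplus \pi_{k\varepsilon_1+\overline{\Lambda}_m}$ as a single $\Ot(2m)$-representation throughout the induction.
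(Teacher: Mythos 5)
Your overall strategy is the same as the paper's: show that the tensor product $\pi_{k\varepsilon_1}\otimes\pi_{\Lambda_p}$ has weight multiplicities depending only on $\norma{\mu}$ and $Z(\mu)$, observe that $\pi_{k,p}$ occurs in it exactly once, and remove the remaining constituents by induction. Your convolution formula is a correct and more explicit substitute for the paper's bijection $\eta\mapsto\eta'$ among the weights of $\pi_{\Lambda_p}$, and the reduction to nonnegative coordinates via the outer automorphism of $\Ot(2m)$ (plus treating $\pi_{k,m}$ as an $\Ot(2m)$-module) is sound. Note also that you do not need to "pin down the Pieri decomposition precisely": for the subtraction argument it suffices to know that $\pi_{k,p}$ has multiplicity one and that every other constituent is some $\pi_{k',p'}$ already covered by the induction hypothesis; the actual multiplicities of those constituents are irrelevant.

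The genuine gap is in your inductive ordering. It is not true that every other summand $\pi_{k_j,p_j}$ of $\pi_{k\varepsilon_1}\otimes\pi_{\Lambda_p}$ satisfies $k_j+p_j<k+p$: the constituent with highest weight $(k-1)\varepsilon_1+\Lambda_{p+1}=k\varepsilon_1+\varepsilon_2+\dots+\varepsilon_{p+1}$, i.e.\ $\pi_{k-1,p+1}$, occurs whenever $k\geq1$ and $p+1\leq m$, and it has $k_j+p_j=k+p$. For instance, for $m\geq4$ the module $V_{2\varepsilon_1}\otimes\bigwedge^2\C^{2m}$ contains $V_{2\varepsilon_1+\varepsilon_2+\varepsilon_3}=\pi_{1,3}$, which has the same total $4$ as the target $\pi_{2,2}$ and is not one of your base cases, so induction on $k+p$ alone does not close. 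The repair is exactly the paper's ordering: every constituent other than $\pi_{k,p}$ satisfies $k'<k$, or $k'=k$ and $p'<p$. This follows from Steinberg's (or Brauer's) formula, since the constituents have highest weights $(k\varepsilon_1+\mu+\rho)^\vee-\rho$ with $\mu$ a weight of $\bigwedge^p$, whose first coordinate is at most $k+1$ (forcing $k'\leq k$, the borderline case being $V_{(k+1)\varepsilon_1}=\pi_{k,1}$) and whose $\norma{\cdot}$-length is at most $k+p$ (forcing $p'\leq p$ when $k'=k$, with equality only for the single occurrence of $\pi_{k,p}$). With the induction run lexicographically in $(k,p)$ instead of on $k+p$, your argument goes through.
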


\begin{proof}
We say that a finite dimensional representation $\sigma$ of $\SO(2m)$ \emph{satisfies condition $(\star)$} if $m_\sigma(\mu)=m_\sigma(\mu')$ for every $\mu$ and $\mu'$ such that $\norma{\mu}=\norma{\mu'}$ and $Z(\mu)=Z(\mu')$.
We see, by Lemma~\ref{lem3:mult-k=1-p=1}, that $\pi_{k\varepsilon_1}$ and $\pi_{\Lambda_p}$ satisfy $(\star)$ for every $k$ and $p$.

Next we show that $\sigma := \pi_{k\varepsilon_1} \otimes \pi_{\Lambda_p}$ also satisfies $(\star)$.
Let $\mu=\sum_{i=1}^m a_i \varepsilon_i$ and $\mu'=\sum_{i=1}^m a_i' \varepsilon_i$ in $\Z^m$ be such that $\norma{\mu}=\norma{\mu'}$ and $Z(\mu)=Z(\mu')$.
We fix a bijection $\varrho:[1,m]\to [1,m]$ so that $a_i'\neq0$ if and only if $a_{\varrho(i)}\neq0$.
We have that
\begin{equation}\label{eq3:m_sigma}
m_\sigma(\mu)
=\sum_{\eta\in \Z^m} m_{\pi_{\Lambda_p}}(\eta) \;m_{\pi_{k\varepsilon_1}}(\mu-\eta)
\end{equation}
and a similar expression  for $m_\sigma(\mu')$ (see for instance \cite[Ex.~V.14]{Knapp}).
Both sums are already over the weights of $\pi_{\Lambda_p}$, that is, over the weights  $\eta=\sum_{i=1}^m b_i \varepsilon_i$ such that $|b_i|\leq 1$ for all $i$ and $\norma{\eta}=p-2r$ for some $r\in\N$, by Lemma~\ref{lem3:mult-k=1-p=1}.
To each such $\eta$ we associate $\eta'=\sum_{i=1}^m b_i' \varepsilon_i$ defined by $b_i'=b_{\varrho(i)}$ for every $i$ such that $a_i'=0$ and $b_i'= \textrm{sg}(a_{\varrho(i)})\, \textrm{sg}({a_i'}) \,  b_{\varrho(i)}$ for every $i$ such that $a_i'\neq 0$.
One can check that $\norma{\eta}=\norma{\eta'}$, $Z(\eta) =Z(\eta')$ and furthermore $\norma{\mu-\eta}=\norma{\mu'-\eta'}$, thus $m_{\pi_{\Lambda_p}}(\eta) \, m_{\pi_{k\varepsilon_1}}(\mu-\eta) = m_{\pi_{\Lambda_p}}(\eta') \, m_{\pi_{k\varepsilon_1}}(\mu'-\eta')$.
By \eqref{eq3:m_sigma} we have that $m_\sigma(\mu) = m_\sigma(\mu')$ as asserted.

By Steinberg's formula (see for instance \cite[Ex.~17-Ch.~ IX]{Knapp}), the representation $\sigma$ decomposes as
\begin{equation}\label{eq3:descomp-sigma}
\chi_\sigma
= \sum_\mu m_{\pi_{\Lambda_p}}(\mu) \, \mathrm{sgn}(\mu+k\varepsilon_1+\rho) \, \chi_{(\mu+k\varepsilon_1+\rho)^\vee-\rho},
\end{equation}
where $\chi_\sigma$ denotes the character of the representation $\sigma$, $\rho=\sum_{j=1}^m (m-j)\varepsilon_j$, half the sum of positive roots, $\eta^\vee$ denotes the only dominant weight in the same Weyl orbit as $\eta$, and
$$
\mathrm{sgn}(\mu) =
\begin{cases}
0 &\text{ if $\omega \mu=\mu$ for some nontrivial $\omega\in W$,}\\
\mathrm{sgn}(\omega) &\text{ otherwise, where $\omega\mu$ is dominant}.
\end{cases}
$$

Note that the sum in \eqref{eq3:descomp-sigma} is over the weights of $\pi_{\Lambda_p}$, described in \eqref{eq3:mult-pi_1,p}.
Moreover, the character of the representation $\pi_{k,p}$ appears in the sum on the right-hand side in \eqref{eq3:descomp-sigma} and this is the only time it does, hence $\pi_{k,p}$ appears exactly once in the decomposition of $\sigma$.
Now the proof of the lemma  is completed by an inductive argument in $k$ and $p$ by checking that any other irreducible representation $\pi_{k',p'}$ that appears in \eqref{eq3:descomp-sigma} satisfies $k'<k$, or else $k'=k$ and $p'<p$, thus $\pi_{k',p'}$ satisfies $(\star)$ by the strong inductive hypothesis.
Finally, since $\sigma$ also satisfies $(\star)$ then $\pi_{k,p}$ also does.
\end{proof}

The next theorem gives an explicit formula for $\dim V_{\pi_{k,p}}^\Gamma$ in terms of weight multiplicities $m_{\pi_{k,p}}(\mu)$ and of the numbers   $N_{\mathcal L} (k,\zz )$, when $L=\Gamma\ba S^{2m-1}$ is a lens space with  congruence lattice $\mathcal L$.

\begin{thm}\label{thm3:dim V_k,p^Gamma}
Let $L=\Gamma\ba S^{2m-1}$ be a lens space with associated lattice $\mathcal L$ and let $k\in\N$ and $0\leq p\leq m$.
Then
\begin{equation}\label{eq3:dim V_k,p^Gamma}
\dim V_{\pi_{k,p}}^{\Gamma}=
    \sum_{r=0}^{\lfloor(k+p)/2\rfloor} \;\sum_{\zz =0}^m \; m_{\pi_{k,p}}(\mu_{r,\zz })\; N_{\mathcal L}(k+p-2r,\zz ),
\end{equation}
where $\mu_{r,\zz }$ is any weight such that $Z(\mu_{r,\zz })=\zz $ and $\norma{\mu_{r,\zz }} = k+p-2r$.

In the particular case when $p=0$ we have that
\begin{equation}\label{eq3:dim V_k,1^Gamma}
\dim V_{\pi_{k\varepsilon_1}}^{\Gamma}=
\sum_{r=0}^{\lfloor k/2\rfloor}\binom{r+m-2}{m-2} N_{\mathcal L}(k-2r).
\end{equation}
\end{thm}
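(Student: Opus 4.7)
The plan is to combine Lemma~\ref{lem3:L_Gamma}, which reduces $\dim V_{\pi_{k,p}}^\Gamma$ to a sum of weight multiplicities over the lattice, with Lemma~\ref{lem3:mult-k-p}, which says that the multiplicity $m_{\pi_{k,p}}(\mu)$ depends on $\mu$ only through the pair $(\norma{\mu}, Z(\mu))$. Once both facts are in place, the double sum in \eqref{eq3:dim V_k,p^Gamma} appears simply by partitioning the lattice points of a given norm according to their number of zero coordinates.

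More precisely, Lemma~\ref{lem3:L_Gamma} gives
\begin{equation*}
\dim V_{\pi_{k,p}}^\Gamma = \sum_{\mu\in\mathcal L} m_{\pi_{k,p}}(\mu).
\end{equation*}
The first step is to identify the support of $m_{\pi_{k,p}}$. Since $\pi_{k,p}$ occurs (with multiplicity one) inside $\pi_{k\varepsilon_1}\otimes\pi_{\Lambda_p}$, as exploited via \eqref{eq3:descomp-sigma} in the proof of Lemma~\ref{lem3:mult-k-p}, any weight $\mu$ of $\pi_{k,p}$ can be written as a sum of a weight of $\pi_{k\varepsilon_1}$ and one of $\pi_{\Lambda_p}$. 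By Lemma~\ref{lem3:mult-k=1-p=1} those weights have $\norma{\cdot}$-lengths of the form $k-2r_1$ and $p-2r_2$ respectively; hence $\norma{\mu}$ is at most $k+p$ and has the same parity as $k+p$, so $\norma{\mu} = k+p-2r$ for some integer $r\ge 0$.

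Next I would split the lattice sum according to the values of $\norma{\mu}$ and $Z(\mu)$: writing
\begin{equation*}
\sum_{\mu\in\mathcal L} m_{\pi_{k,p}}(\mu) = \sum_{r=0}^{\lfloor (k+p)/2\rfloor} \sum_{\zz=0}^m \sum_{\substack{\mu\in\mathcal L \\ \norma{\mu}=k+p-2r \\ Z(\mu)=\zz}} m_{\pi_{k,p}}(\mu).
\end{equation*}
By Lemma~\ref{lem3:mult-k-p}, the innermost summand is constant and equal to $m_{\pi_{k,p}}(\mu_{r,\zz})$ for any fixed representative $\mu_{r,\zz}$ of the prescribed $(\norma{\cdot}, Z(\cdot))$--class; the number of terms in the innermost sum is $N_{\mathcal L}(k+p-2r,\zz)$ by definition. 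This yields \eqref{eq3:dim V_k,p^Gamma}.

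For the special case $p=0$, equation \eqref{eq3:mult-pi_k,1} of Lemma~\ref{lem3:mult-k=1-p=1} gives the explicit value $m_{\pi_{k\varepsilon_1}}(\mu_{r,\zz}) = \binom{r+m-2}{m-2}$, which depends only on $r$ and not on $\zz$. Pulling this factor out of the inner sum and using $\sum_{\zz=0}^m N_{\mathcal L}(k-2r,\zz) = N_{\mathcal L}(k-2r)$ collapses \eqref{eq3:dim V_k,p^Gamma} to \eqref{eq3:dim V_k,1^Gamma}. No step is really difficult once the two preceding lemmas are granted; the only point that requires care is the parity/support argument, since otherwise one might worry that the index $r$ in the outer sum must be restricted further.
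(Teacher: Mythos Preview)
Your proof is correct and follows essentially the same route as the paper: start from Lemma~\ref{lem3:L_Gamma}, restrict to weights with $\norma{\mu}\equiv k+p\pmod 2$ and $\norma{\mu}\le k+p$, stratify by $(\norma{\mu},Z(\mu))$, and invoke Lemma~\ref{lem3:mult-k-p}. The only difference is in how you justify the parity/support restriction: the paper observes directly that every weight of $\pi_{k,p}$ has the form $k\varepsilon_1+\Lambda_p-\nu$ (or $k\varepsilon_1+\overline\Lambda_m-\nu$ when $p=m$) with $\nu$ a sum of positive roots, and since each positive root $\varepsilon_i\pm\varepsilon_j$ has $\norma{\cdot}$-length $2$, the conclusion is immediate; you instead pass through the embedding $\pi_{k,p}\hookrightarrow\pi_{k\varepsilon_1}\otimes\pi_{\Lambda_p}$ and the explicit weight descriptions in Lemma~\ref{lem3:mult-k=1-p=1}. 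Both arguments are valid; the paper's is slightly more direct since it avoids the tensor product entirely for this step.
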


\begin{proof}
By Lemma~\ref{lem3:L_Gamma} we have that
\begin{equation*}
\dim V_{\pi_{k,p}}^{\Gamma} = \sum_{\mu\in \mathcal L} m_{\pi_{k,p}}(\mu).
\end{equation*}
The sum is finite since it is a sum over the weights $\mu$ of $\pi_{k,p}$.
These weights are of the form $k\varepsilon_1+\Lambda_{p}-\nu$ with $\nu$ a sum of positive roots, if $p<m$, and of the form $k\varepsilon_1+\Lambda_{m}-\nu$ or $k\varepsilon_1+ \overline \Lambda_{m}-\nu$, if $p=m$.
Since $\norma{\alpha}=\norma{\varepsilon_i\pm\varepsilon_j}=2$ for every positive root $\alpha$ of $\,\mathfrak{so}(2m,\C)$ (see Section~\ref{sec:prelim}), then  $m_{\pi_{k,p}}(\mu)=0$ unless $\norma{k\varepsilon_1+\Lambda_{p}}-\norma{\mu}=k+p-\norma{\mu}\in2\N_0$.
Hence
\begin{align*}
\dim V_{\pi_{k,p}}^{\Gamma}
    &=
    \sum_{r=0}^{\lfloor (k+p)/2\rfloor} \;\sum_{\zz =0}^m \;\sum_{\mu\in \mathcal L:\, Z(\mu)=\zz , \atop  \norma{\mu}=k+p-2r} m_{\pi_{k,p}}(\mu).
\end{align*}
Since, by Lemma~\ref{lem3:mult-k-p}, the value of $m_{\pi_{k,p}}(\mu)$ depends only on $\norma{\mu}$ and $Z(\mu)$, the last sum equals the number of weights $\mu$ such that $\norma{\mu}=k+p-2r$ and $Z(\mu)=\zz $, times the multiplicity of any such weight.
This proves \eqref{eq3:dim V_k,p^Gamma}.

In the case when $p=0$, the multiplicity $m_{\pi_{k\varepsilon_1}}(\mu)$ is as  given in \eqref{eq3:mult-pi_k,1}.
Thus
\begin{align*}
\dim V_{\pi_{k\varepsilon_1}}^{\Gamma}
    &= \sum_{r=0}^{\lfloor k/2\rfloor} \sum_{\mu\in \mathcal L:\atop \norma{\mu}=k-2r} \binom{r+m-2}{m-2}
    = \sum_{r=0}^{\lfloor k/2\rfloor}\binom{r+m-2}{m-2} N_{\mathcal L}(k-2r). \notag
\end{align*}
This completes the proof.
\end{proof}

We now state the first main result in this paper.

\begin{thm}\label{thm3:characterization}
Let $L=\Gamma\ba S^{2m-1}$ and $L'=\Gamma'\ba S^{2m-1}$ be lens spaces with associated congruence lattices $\mathcal L$ and $\mathcal L'$ respectively.
Then
\begin{enumerate}
  \item[(i)] $L$ and $L'$ are $0$-isospectral if and only if $\mathcal L$ and $\mathcal L'$ are $\norma{\cdot}$-isospectral.
  \item[(ii)] $L$ and $L'$ are $p$-isospectral for all $p$ if and only if $\mathcal L$ and $\mathcal L'$ are    $\norma{\cdot}^*$-isospectral.
\end{enumerate}
\end{thm}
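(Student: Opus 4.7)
The plan is to combine Proposition~\ref{prop2:charact}, which reduces both (i) and (ii) to equalities between the multiplicities $\dim V_{\pi_{k,p}}^{\Gamma}$, with Theorem~\ref{thm3:dim V_k,p^Gamma}, which expresses each such multiplicity as an explicit linear combination of the lattice counts $N_{\mathcal L}(h,\zz)$. In both parts, one implication is immediate by substitution and the other requires inverting a triangular system.

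For (i), the direction $\Leftarrow$ follows at once: $\norma{\cdot}$-isospectrality and \eqref{eq3:dim V_k,1^Gamma} give $\dim V_{\pi_{k\varepsilon_1}}^{\Gamma}=\dim V_{\pi_{k\varepsilon_1}}^{\Gamma'}$ for every $k$, whence $0$-isospectrality by Proposition~\ref{prop2:charact}(i). Conversely, assuming $0$-isospectrality, Proposition~\ref{prop2:charact}(i) yields the equality of $\dim V_{\pi_{k\varepsilon_1}}^{\Gamma}$ for all $k$; the formula \eqref{eq3:dim V_k,1^Gamma} is then an upper-triangular linear system in the $(N_{\mathcal L}(k))_{k\ge 0}$ with leading coefficient $\binom{m-2}{m-2}=1$, so an induction on $k$ gives $N_{\mathcal L}(k)=N_{\mathcal L'}(k)$ for every $k$.

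For (ii), the direction $\Leftarrow$ is again immediate from \eqref{eq3:dim V_k,p^Gamma} and Proposition~\ref{prop2:charact}(ii). For the forward direction, I would prove $N_{\mathcal L}(h,\zz)=N_{\mathcal L'}(h,\zz)$ by strong induction on $h$. Fix $h$ and write $k=h-p$. Grouping \eqref{eq3:dim V_k,p^Gamma} by $r$, the contribution of $r\ge 1$ only involves lattice counts $N_{\mathcal L}(h-2r,\zz)$, which are equal to their $\mathcal L'$-counterparts by the inductive hypothesis. Since $\dim V_{\pi_{h-p,p}}^{\Gamma}=\dim V_{\pi_{h-p,p}}^{\Gamma'}$ by Proposition~\ref{prop2:charact}(ii), subtraction leaves, for each $p\in\{0,1,\dots,\min(h,m)\}$, a homogeneous relation
\[
\sum_{\zz=0}^{m} m_{\pi_{h-p,p}}(\mu_{0,\zz})\,\bigl(N_{\mathcal L}(h,\zz)-N_{\mathcal L'}(h,\zz)\bigr)=0.
\]
The unknowns are supported on $\max(0,m-h)\le \zz\le m-1$ (a vector with $\norma{\mu}=h\ge 1$ has at most $\min(h,m)$ nonzero coordinates and at least one), giving at most $\min(h,m)$ genuine unknowns and $\min(h,m)+1$ equations.

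The main obstacle is to verify that this coefficient matrix has trivial kernel. The cleanest route I see is to bypass $\pi_{k,p}$ and work with the tensor product $\pi_{k\varepsilon_1}\otimes\pi_{\Lambda_p}$: by the Steinberg decomposition used in the proof of Lemma~\ref{lem3:mult-k-p}, one has $\chi_{\pi_{k\varepsilon_1}\otimes\pi_{\Lambda_p}} = \chi_{\pi_{k,p}} + \sum_{(k',p')\prec(k,p)} c_{k',p'}\chi_{\pi_{k',p'}}$, so a second induction on $(k,p)$ shows that $\dim(V_{\pi_{k\varepsilon_1}}\otimes V_{\pi_{\Lambda_p}})^{\Gamma}=\dim(V_{\pi_{k\varepsilon_1}}\otimes V_{\pi_{\Lambda_p}})^{\Gamma'}$. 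Using $m_{\pi_{k\varepsilon_1}\otimes\pi_{\Lambda_p}}(\mu)=\sum_{\eta}m_{\pi_{k\varepsilon_1}}(\mu-\eta)\,m_{\pi_{\Lambda_p}}(\eta)$ together with the explicit formulas in Lemma~\ref{lem3:mult-k=1-p=1}, the coefficient matrix at the leading level factors cleanly: the constraint $|b_j|\le 1$ on the weights $\eta$ of $\pi_{\Lambda_p}$ forces the $\pi_{\Lambda_p}$-factor to record exactly how many nonzero coordinates of $\mu$ are altered, so varying $p$ from $0$ up to $\min(h,m)$ separates the parameter $\zz$ and makes the resulting square subsystem upper triangular with nonzero diagonal. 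This inverts the system, yields $N_{\mathcal L}(h,\zz)=N_{\mathcal L'}(h,\zz)$ for every $\zz$, and completes the induction.
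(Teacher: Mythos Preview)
Your argument is correct. Part~(i) is identical to the paper's. For part~(ii), the overall induction on $h=k+p$ and the reduction to the $r=0$ level also match, but you diverge from the paper at the invertibility of the leading coefficient matrix. The paper stays with $\pi_{k,p}$ itself: it chooses the specific weight $\mu_{0,\zz} = (k_0-m+\zz+1)\varepsilon_1+\varepsilon_2+\cdots+\varepsilon_{m-\zz}$ and simply observes that $m_{\pi_{k,p}}(\mu_{0,m-p})=1$ (this is the highest weight) while $m_{\pi_{k,p}}(\mu_{0,\zz})=0$ for $\zz>m-p$ (since $k\varepsilon_1+\Lambda_p-\mu_{0,\zz}$ has negative $\varepsilon_1$-coefficient and hence is not a sum of positive roots). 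This makes the $m\times m$ matrix anti-triangular with $1$'s on the anti-diagonal, with no detour through the tensor product. Your route via $\pi_{k\varepsilon_1}\otimes\pi_{\Lambda_p}$ also works and has the small bonus that the leading coefficient is the explicit binomial $\binom{m-\zz}{p}$, yielding an invertible Pascal-type matrix; but it costs the extra input (used in the proof of Lemma~\ref{lem3:mult-k-p}) that every irreducible constituent of the tensor product is again some $\pi_{k',p'}$, and your ``second induction on $(k,p)$'' is actually superfluous: once that decomposition is known, $p$-isospectrality for all $p$ gives the equality of $\Gamma$-invariants for each constituent directly, hence for the tensor product.
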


\begin{proof}
Proposition~\ref{prop2:charact}~(i) (resp.\ (ii)) says that $L$ and $L'$ are $0$-isospectral (resp.\ $p$-isospectral for all $p$) if and only if,  for every $k\in\N$, $\dim V_{\pi_{k\varepsilon_1}}^{\Gamma} = \dim V_{\pi_{k\varepsilon_1}}^{\Gamma'}$ (resp. $\dim V_{\pi_{k,p}}^{\Gamma} = \dim V_{\pi_{k,p}}^{\Gamma'}$ for every $k\in\N$ and every $1\leq p\leq m$).
Hence, in the converse direction, (i) and (ii) follow immediately from \eqref{eq3:dim V_k,1^Gamma} and \eqref{eq3:dim V_k,p^Gamma} respectively.

We now assume that $L$ and $L'$ are $0$-isospectral.
We shall prove by induction that
\begin{equation}\label{eq3:N(k)=N'(k)}
N_{\mathcal L}(k) = N_{\mathcal L'}(k)
\end{equation}
for every $k \in\N$.
The case $k=0$ is clear, since both sides are equal to one.
Suppose that \eqref{eq3:N(k)=N'(k)} holds for every $k<k_0$.
By \eqref{eq3:dim V_k,1^Gamma} we have that
$$
\sum_{r\geq0} \binom{r+m-2}{m-2} N_{\mathcal L}(k_0-2r)
=
\sum_{r\geq0} \binom{r+m-2}{m-2} N_{\mathcal L'}(k_0-2r).
$$
All the terms with $r>0$ on both sides are equal by assumption, hence this equality implies that also $N_{\mathcal L}(k_0)=N_{\mathcal L'}(k_0)$. This proves (i).

We next prove (ii).
Assume that $L$ and $L'$ are $p$-isospectral for all $p$.
We shall prove that
\begin{equation}\label{eq3:N(k,z)=N'(k,z)}
N_{\mathcal L}(k,\zz ) = N_{\mathcal L'}(k,\zz )
\qquad\forall\zz:
0\leq\zz \leq m,
\end{equation}
for every $k\in\N$.
We use an inductive argument on $k$.
The case $k=0$ is again clear.
We suppose that \eqref{eq3:N(k,z)=N'(k,z)} holds for every $k<k_0$.
For each $1\leq p\leq m$, if we let $k=k_0-p$, then, by \eqref{eq3:dim V_k,p^Gamma}, since $L$ and $L'$ are $p$-isospectral, we have that
$$
\sum_{r\geq0} \sum_{\zz =0}^m \; m_{\pi_{k,p}}(\mu_{r,\zz })\; N_{\mathcal L}(k_0-2r,\zz )
=
\sum_{r\geq0} \sum_{\zz =0}^m \; m_{\pi_{k,p}}(\mu_{r,\zz })\; N_{\mathcal L'}(k_0-2r,\zz ),
$$
where $\mu_{r,\zz }$ is any weight satisfying $\norma{\mu_{r,\zz }}=k_0-2r$ and $Z(\mu_{r,\zz })=\zz $.
By assumption, all terms in both sides with $r>0$ coincide. Thus
\begin{equation*}
\sum_{\zz =0}^{m-1} \; m_{\pi_{k,p}}(\mu_{0,\zz })\; N_{\mathcal L}(k_0,\zz )
=
\sum_{\zz =0}^{m-1} \; m_{\pi_{k,p}}(\mu_{0,\zz })\; N_{\mathcal L'}(k_0,\zz ).
\end{equation*}
Note that the terms $\zz =m$ in both sides have been deleted since they are both  equal to zero.

To prove our claim it suffices to show that the $m\times m$-matrix $(m_{\pi_{k,p}}(\mu_{0,\zz }) )_{p,\zz}$ with $p=1,\dots,m$ and $\zz =0,\dots, m-1$ is invertible. We claim that this matrix  has $1$'s on the anti-diagonal and it is `upper-triangular' with respect to the anti-diagonal, hence it has determinant $\pm 1$.

Now, the element $\mu_{0,\zz }$ is any weight in $\Z^m$ such that $\norma{\mu_{0,\zz }}=k_0$ and $Z(\mu_{r,\zz })=\zz $, thus we may pick
\begin{equation*}
\mu_{0,\zz } = (k_0-m+\zz +1)\varepsilon_1 + \varepsilon_2 + \dots + \varepsilon_{m-\zz }.
\end{equation*}
If $m-\zz=p$ (i.e.\ $(p,\zz)$ is on the antidiagonal), then $\mu_{0,\zz}=k\varepsilon_1+\Lambda_p$.
If $p<m$, then $\pi_{k,p}$ has highest weight $k\varepsilon_1+\Lambda_p$, hence $m_{\pi_{k,p}}(\mu_{0,\zz })=1$.
On the other hand, if $m-\zz<p$ then  $\mu_{0,\zz } $ cannot be a weight since
$k\varepsilon_1 +\Lambda_p - \mu_{0,\zz }$ is not a sum of positive roots  given that the coefficient of $\varepsilon_1$ equals $m-\zz-p<0$.
The case $p=m$ is very similar and its verification is left to the reader.
\end{proof}

\begin{rem}\label{rem3:non-strongly}
Two spherical space forms $\Gamma\ba S^{n}$ and $\Gamma'\ba S^{n}$ are said to be \emph{strongly isospectral} if for any strongly elliptic natural operator $D$ acting on sections of a natural bundle $E$ over $S^n$, the associated operators $D_\Gamma$ and $D_{\Gamma'}$ acting on sections of the bundles $\Gamma\ba E$ and $\Gamma'\ba E$ have the same spectrum.
Isospectral manifolds constructed by Sunada's method are strongly isospectral.
It is a well known fact that non-isometric lens spaces cannot be strongly isospectral (see Proposition~\ref{prop7:lens-non-strongly}).
\end{rem}

\begin{rem}\label{rem3:ej-Ikeda}
Ikeda in \cite{Ik80} gave many pairs of non-isometric lens spaces that are $0$-isospectral.
The simplest such pair is $L(11;1,2,3)$ and $L(11;1,2,4)$ in dimension $5$.
In light of Theorem~\ref{thm3:characterization}~(i), the associated congruence $3$-dimensional lattices $\mathcal L=\mathcal L(11;1,2,3)$ and $\mathcal L'=\mathcal L(11;1,2,4)$ must be $\norma{\cdot}$-isospectral.
However, it is a simple matter to check that $\mathcal L$ and $\mathcal L'$  are not $\norma{\cdot}^*$-isospectral.
In fact, it is easy to see that $\pm(2,-1,0)$ and $\pm(1,1-1)$ are the only vectors in $\mathcal L$ with $1$-norm equal to $3$, while $\pm(2,-1,0)$ and $\pm(0,2,-1)$ are those with $1$-norm equal to $3$ lying in $\mathcal L'$.
This implies that $N_{\mathcal L}(3,0)=2 \neq N_{\mathcal L'}(3,0)=0$ and  $N_{\mathcal L}(3,1)=2 \neq N_{\mathcal L'}(3,1)=4$, proving the assertion.

As we shall see in Section~\ref{sec:families}, there exist infinitely many pairs of congruence lattices that are $\norma{\cdot}^*$-isospectral in dimension $m=3$.
Such examples do not exist for $m=2$, since Ikeda and Yamamoto showed that two $0$-isospectral $3$-dimensional lens spaces are isometric (\cite{IY}, \cite{Y}).
Also, in the relevant paper \cite{Ik88}, Ikeda produced for each given $p_0$ pairs of lens spaces that are $p$-isospectral for every $0\le p \le p_0$ but are not $p_0+1$ isospectral.
\end{rem}

\section{Finiteness conditions}\label{sec:finiteness}

In this section we give a necessary and sufficient condition for two $m$-dimensional $q$-congruence lattices to be $\norma{\cdot}^*$-isospectral, by comparison, for the two lattices, of a finite set of numbers of cardinality at most  $\binom{m+1}{2}q$.
Thus, in light of the connection with lens spaces in Theorem~\ref{thm3:characterization}~(ii), one can check with finitely many computations whether two lens spaces are $p$-isospectral for all $p$.
In Section~\ref{sec:examples} we will show many examples of $\norma{\cdot}^*$-isospectral lattices found with a computer.

We first need to introduce some notions and notations.
For $q\in\N$ we set
$$
C(q)=\left\{\textstyle\sum_j a_j\varepsilon_j\in\Z^m : |a_j|<q\;,\,\forall\,j\right\}.
$$
An element in $C(q)$ will be called \emph{$q$-reduced}.
We define an equivalence relation in $\Z^m$ as follows: if  $\mu=\sum_j a_j\varepsilon_j$, $\mu'=\sum_j a_j'\varepsilon_j\in \Z^m$ then $\mu\sim \mu'$ if and only if $\mu-\mu'\in (q\Z)^m$ and $a_j a_j'\geq 0$ for every $j$ such that $ a_j\not \equiv 0 \pmod q$.
This relation induces an equivalence relation in $\Z^m$ and also in any $q$-congruence lattice $\mathcal L$ since $q\Z^m\subset\mathcal L$.
Furthermore, $C(q)$ and $C(q)\cap\mathcal L$ give a complete set of representatives of $\sim$ on $\Z^m$ and $\mathcal L$ respectively.
We now consider the number of $q$-reduced elements $\mathcal L$ with a fixed norm and a fixed number of zeros.

\begin{defi}
Let $\mathcal L$ be a $q$-congruence lattice as in \eqref{eq3:Lambda(q;s)}.
For any $k\in\N_0$ and $0\leq \zz \leq m$, we set
\begin{equation*}
N_{\mathcal L}^{\mathrm{red}}(k,\zz )
  = \#\{\mu\in C(q)\cap \mathcal L: \norma{\mu} = k,\; Z(\mu)=\zz \}.
\end{equation*}
\end{defi}
We note that any element $\mu\in \Z^m$ lying in the regular tetrahedron  $\norma{\mu}<q$ is $q$-reduced, thus $N_{\mathcal L}(k,\zz ) = N_{\mathcal L}^{\mathrm{red}}(k,\zz )$ for every $k<q$. Also, for each of the $m-\zz$ nonzero coordinates $a_i$ of a $q$-reduced element one has $|a_i|\le q-1$, thus
$N_{\mathcal L}^{\mathrm{red}}(k,\zz )=0$ for every $k>(m-\zz)(q-1)$. Hence, the total number of possibly nonzero numbers $N_{\mathcal L}^{\mathrm{red}}(k,\zz )$ is at most $\binom{m+1}{2}q$.

We have mentioned above that every element in a $q$-congruence lattice $\mathcal L$ is equivalent to one and only one $q$-reduced element in $\mathcal L$.
As one should expect, the finite set of $N_{\mathcal L}^{\mathrm{red}}(k,\zz)$'s determines the numbers $N_{\mathcal L}(k,\zz)$, for every $k, \zz$. That is, if $N_{\mathcal L}^{\mathrm{red}}(k,\zz)=N_{\mathcal L'}^{\mathrm{red}}(k,\zz)$ for every $k$ and $\zz$, then $\mathcal L$ and $\mathcal L'$ are $\norma{\cdot}^*$-isospectral.
The next theorem shows this fact by giving an explicit formula for $N_{\mathcal L}(k,\zz)$ in terms of the $N_{\mathcal L}^{\mathrm{red}}(k,\zz)$. This formula  will also allow us to prove  that the numbers $N_{\mathcal L}(k,\zz)$ determine the numbers $N_{\mathcal L}^{\mathrm{red}}(k,\zz)$.

\begin{thm}\label{thm4:finitud}
Let $\mathcal L$ and $\mathcal L'$ be two $q$-congruence lattices as in \eqref{eq3:Lambda(q;s)}.
\begin{enumerate}
\item[(i)]
If $k=\alpha q+r\in \N$ with $0\leq r < q$, then
\begin{equation}\label{eq4:N^*-N^red}
N_{\mathcal L}(k,\zz )= \sum_{s=0}^{m-\zz } 2^s\binom{\zz +s}{s} \sum_{t=s}^{\alpha}
\binom{t-s+m-\zz -1}{m-\zz -1} \,
N_{\mathcal L}^{\mathrm{red}}(k-tq,\zz +s).
\end{equation}

\item[(ii)]
$N_{\mathcal L}(k,\zz ) = N_{\mathcal L'}(k,\zz )$ for every $k$ and $\zz $ if and only if $N_{\mathcal L}^{\mathrm{red}}(k,\zz ) = N_{\mathcal L'}^{\mathrm{red}}(k,\zz )$ for every $k$ and $\zz $.
\end{enumerate}
\end{thm}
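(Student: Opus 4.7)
The plan for part (i) is to group the elements $\mu \in \mathcal L$ contributing to $N_{\mathcal L}(k,\zz)$ according to their unique $q$-reduced representative $\mu^{\mathrm{red}} \in C(q) \cap \mathcal L$, and then count, for each such $\mu^{\mathrm{red}}$, how many $\mu$ in its equivalence class satisfy $\norma{\mu}=k$ and $Z(\mu) = \zz$. The key observation is that if $\mu$ has coordinates $(a_j)$ and $\mu^{\mathrm{red}}$ has coordinates $(b_j)$, then $b_j = 0$ exactly when $a_j \in q\Z$; so letting $s$ denote the number of coordinates of $\mu$ that are \emph{nonzero} multiples of $q$, we have $Z(\mu^{\mathrm{red}}) = \zz + s$.

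For each $j$ with $b_j \neq 0$ the equivalence relation forces $a_j = b_j + \mathrm{sg}(b_j)\, q t_j$ with $t_j \geq 0$ (so $|a_j| = |b_j| + q t_j$), while each of the $s$ coordinates where $b_j = 0 \neq a_j$ satisfies $a_j = \pm q u_j$ with $u_j \geq 1$. Setting $\norma{\mu^{\mathrm{red}}} = k - tq$, the norm equation becomes $\sum_j t_j + \sum_j u_j = t$. I would then decompose the count for a fixed $\mu^{\mathrm{red}}$ into three independent factors: $\binom{\zz+s}{s}$ ways to choose which $s$ of its $\zz+s$ zero positions become nonzero in $\mu$, $2^s$ sign choices for those active positions, and, via stars and bars after substituting $u_j = v_j + 1$, a count of $\binom{t-s+m-\zz-1}{m-\zz-1}$ nonnegative integer tuples realizing the norm constraint. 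Summing over the permissible $s$ and $t$ (where $t \geq s$ arises from the shift and $t \leq \alpha$ from $k - tq \geq 0$) yields~\eqref{eq4:N^*-N^red}.

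For part (ii) the ``if'' direction is immediate from (i). For the ``only if'' direction I would proceed by induction on $k$. In the base case $k < q$, every $\mu \in \mathcal L$ with $\norma{\mu} = k$ satisfies $|a_j| \leq k < q$ for all $j$, hence is already $q$-reduced; thus $N_{\mathcal L}^{\mathrm{red}}(k,\zz) = N_{\mathcal L}(k,\zz)$ and these are determined by the $N$-values of $\mathcal L$. For the inductive step $k \geq q$, the $s = 0$, $t = 0$ term of~\eqref{eq4:N^*-N^red} is precisely $N_{\mathcal L}^{\mathrm{red}}(k,\zz)$ with coefficient $1$, while every remaining term involves $N_{\mathcal L}^{\mathrm{red}}(k - tq, \zz + s)$ with $t \geq 1$, and hence with first argument strictly less than $k$. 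By the inductive hypothesis these latter quantities are determined by the $N$-values of $\mathcal L$, so $N_{\mathcal L}^{\mathrm{red}}(k,\zz)$ is determined too; comparing with the same identity for $\mathcal L'$ finishes the proof.

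The main difficulty I anticipate is the combinatorial bookkeeping in step two: one must cleanly partition the coordinates of $\mu$ into three types (zero, nonzero multiple of $q$, nonzero non-multiple of $q$), track the compatible sign and multiplicity choices coming from the equivalence relation, and verify that the stars-and-bars shift produces the lower limit $t \geq s$ in the inner sum exactly as stated.
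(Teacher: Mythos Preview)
Your proposal is correct and follows essentially the same approach as the paper. For part~(i), both you and the paper group elements $\mu$ by their $q$-reduced representative $\mu_0$ of type $(k-tq,\zz+s)$, and count the preimages via the same three factors $\binom{\zz+s}{s}$, $2^s$, and $\binom{t-s+m-\zz-1}{m-\zz-1}$; for part~(ii), your single strong induction on $k$ (isolating the $s=t=0$ term) is a mild streamlining of the paper's double induction on $\alpha$ and then decreasing on $\zz$, but the underlying idea is identical.
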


\begin{proof}
We begin by proving (i).
We fix $0\leq r<q$ and we write $k=\alpha q+r$ for some $\alpha \in\N_0$.
When $\alpha=0$ \eqref{eq4:N^*-N^red} is reduced to the identity $N_{\mathcal L}(r,\zz ) = N_{\mathcal L}^{\mathrm{red}}(r,\zz )$, which is valid.
For convenience, in the rest of this proof, we  say that $\mu$ is \emph{of type} $(k,\zz )$ if $\norma{\mu}=k$ and $Z(\mu)=\zz $.

Now assume  that $\alpha=1$.
In this case \eqref{eq4:N^*-N^red} is reduced to
$$
N_{\mathcal L}(q+r,\zz ) =
    N_{\mathcal L}^{\mathrm{red}}(q+r,\zz ) +
    (m-\zz ) N_{\mathcal L}^{\mathrm{red}}(r,\zz ) +
    2(\zz +1) N_{\mathcal L}^{\mathrm{red}}(r,\zz +1).
$$
There are three terms in the right hand side.
Also, if $\mu$ is an element of type $(q+r,\zz )$ and $\mu_0$ is the only element in $C(q)$ such that $\mu\sim\mu_0$, then there are three possible different types for $\mu_0$, namely $(q+r,\zz )$, $(r,\zz )$ and $(r,\zz +1)$.
Next, we check the correspondence between the three terms and the three types, in the same order that are given.

The first term corresponds to the elements in $\mathcal L$ of type $(q+r,\zz )$ which are already reduced.
The second term corresponds to the elements in $\mathcal L$ that are equivalent to a reduced element of type $(r,\zz )$.
Indeed, if $\mu=\sum_i a_i\varepsilon_i \in\mathcal L\cap C(q)$ is of type $(r,\zz )$, then for each nonzero coordinate $i$ of $\mu$ (there are $m-\zz $ of them), the element $\mu+\frac{a_i}{|a_i|}q\varepsilon_i$ has type $(q+r,\zz )$ and lies in the lattice, since $\pm q\varepsilon_i\in q\Z^m \subset \mathcal L$.
Regarding the third term, for each $\mu\in\mathcal L\cap C(q)$ of type $(r,\zz +1)$ and each zero coordinate $i$ of $\mu$ (there are $\zz+1$ of them), the element $\mu\pm q\varepsilon_i$ has type $(q+r,\zz )$.

The detailed description done in the particular case $\alpha=1$ will help to  understand the general case.
Let $\mu\in\mathcal L$ of type $(k,\zz )$ and denote by $\mu_0$ the only element in $C(q)\cap\mathcal L$ such that $\mu\sim\mu_0$. One can check that $\mu_0$ is of type $(k-tq,\zz +s)$ for some $0\leq s\leq m-\zz $ and some $s\leq t\leq \alpha$.

Assume that $\mu_0$ is of type $(k-tq,\zz +s)$.
For each choice of $s$ zero coordinates, $i_1,\dots,i_s$,  of $\mu_0$,  the element $\mu_1:=\mu_0 \pm q \varepsilon_{i_1}\pm\dots\pm  q \varepsilon_{i_s}$ has type $(k-tq+sq,\zz )$.
There are $2^s\binom{\zz +s}{s}$ different ways to choose $\mu_1$ from $\mu_0$.
Now, it remains to add $\pm q$ (depending on the sign of the coordinate)  $(t-s)$-times  in the $m-\zz $ nonzero coordinates.
This can be done in as many ways as  the number of ordered partitions of $t-s$ into $m-\zz$ parts, that is, the number of ways of writing $t-s\in\N_0$ as a sum of $m-\zz $ non-negative integers. This equals $\binom{t-s+m-\zz -1}{m-\zz -1}$ and  establishes formula \eqref{eq4:N^*-N^red}.

We next prove (ii). In one direction the assertion follows from \eqref{eq4:N^*-N^red}.
We now assume that $N_{\mathcal L}(k,\zz ) = N_{\mathcal L'}(k,\zz )$ for every $k$ and $\zz $.
We write $k=\alpha q+r$ with $0\leq r<q$.
We argue by induction on $\alpha$.
When $\alpha =0$, $N_{\mathcal L}(k,\zz ) = N_{\mathcal L}^{\mathrm{red}}(k,\zz )$ and similarly for $\mathcal L'$, thus $N_{\mathcal L}^{\mathrm{red}}(k,\zz ) = N_{\mathcal L'}^{\mathrm{red}}(k,\zz )$ for every $k<q$.

We assume that $N_{\mathcal L}^{\mathrm{red}}(k,\zz ) = N_{\mathcal L'}^{\mathrm{red}}(k,\zz )$ holds for every $k=\alpha q+r$ with $\alpha <\alpha_0\in\N$.
Clearly, $N_{\mathcal L}^{\mathrm{red}}(\alpha_0 q+r,m) = N_{\mathcal L'}^{\mathrm{red}}(\alpha_0 q+r,m)=0$.
We proceed  by induction on $\zz$, decreasing from $m$ to $0$.
Suppose that $N_{\mathcal L}^{\mathrm{red}}(\alpha_0 q+r,\zz ) = N_{\mathcal L'}^{\mathrm{red}}(\alpha_0 q+r,\zz )$ for every $\zz>\zz_0$.
By \eqref{eq4:N^*-N^red}, $N_{\mathcal L}(\alpha_0 q+r,\zz_0)$ can be written as a linear combination  of the $N_{\mathcal L}^{\mathrm{red}}(\alpha q+r,\zz)$ for $\alpha\leq \alpha_0$ and $\zz \geq \zz_0$, and similarly for $N_{\mathcal L'}(\alpha_0 q+r,\zz_0)$.
Thus, by the inductive hypothesis, we obtain that $N_{\mathcal L}(\alpha_0 q+r,\zz_0) = N_{\mathcal L'}(\alpha_0 q+r,\zz_0)$ as asserted.
\end{proof}

\section{Computations and questions}\label{sec:examples}
In this section we shall use the finiteness theorem of Section~\ref{sec:finiteness} to produce, with the help of a computer, many examples of pairs of non-isometric congruence lattices that are $\norma{\cdot}^*$-isospectral.
In light of Theorem~\ref{thm3:characterization}~(ii), each such pair gives rise to a pair of non-isometric lens spaces that are $p$-isospectral for all $p$.

We next explain the computational procedure to find $\norma{\cdot}^*$-isospectral lattices.
For each $m$ and $q$, one finds first, by using Propositions~\ref{prop3:lens-isom} and \ref{prop3:isometrias}, a complete list of non-isometric $q$-congruence lattices in $\Z^m$.
Then, for each lattice $\mathcal L$ in the list, one computes the (finitely many) numbers $N_{\mathcal L}^{\mathrm{red}}(k,\zz )$ for $0\leq \zz \leq m$ and $0\leq k\leq (m-\zz)(q-1)$.
Next, for each pair of lattices, one compares their  associated  sets of numbers.
Finally, the program puts together the lattices for which these numbers coincide.
By Theorem~\ref{thm4:finitud}, such lattices are mutually $\norma{\cdot}^*$-isospectral.

By the procedure  above, using the computer program Sage~\cite{Sage}, we found all $\norma{\cdot}^*$-isospectral $m$-dimensional $q$-congruence lattices for $m=3$, $q\leq 300$  and $m=4$, $q\leq 150$ (see Tables~\ref{table:m=3} and \ref{table:m=4}).
We point out that all such lattices come in pairs  for these values of $q$ and $m$ (see Question~\ref{que6:families}).
In the tables, the parameters $[s_1,\dots,s_m]$ and $[s_1',\dots,s_m']$ in a row indicate the corresponding  $\norma{\cdot}^*$-isospectral lattices
$\mathcal L(q;s_1,\dots, s_m)$ and $\mathcal L(q;s_1',\dots, s_m')$ as in \eqref{eq3:Lambda(q;s)}.

\begin{table}
\caption{Pairs of $\norma{\cdot}^*$-isospectral $q$-congruence lattices of dimension $m=3$ for  $q\leq 300$.}\label{table:m=3}
{
\begin{tabular}[t]{c@{\;\;[}r@{,\,}r@{,\,}r@{\,]\quad[}r@{,\,}r@{,\,}r@{\,]\;\;}c}
$q$& $s_1$&$s_2$&$s_3$& $s_1'$&$s_2'$&$s_3'$\\ \hline
 49& 1&  6& 15& 1&  6& 20 & *\\
 64& 1&  7& 17& 1&  7& 23 & *\\
 98& 1& 13& 29& 1& 13& 41 & *\\
100& 1&  9& 21& 1&  9& 29 & *\\
100& 1&  9& 31& 1&  9& 39\\
121& 1& 10& 23& 1& 10& 32 & *\\
121& 1& 10& 34& 1& 10& 43\\
121& 1& 10& 45& 1& 10& 54\\
121& 1& 21& 34& 1& 21& 54\\
121& 1& 21& 45& 1& 21& 56\\
128& 1& 15& 33& 1& 15& 47 & *\\
147& 1& 20& 43& 1& 20& 62 & *\\
169& 1& 12& 27& 1& 12& 38 & *\\
169& 1& 12& 53& 1& 12& 64\\
169& 1& 12& 66& 1& 12& 77\\
169& 1& 25& 40& 1& 25& 64\\
169& 1& 25& 53& 1& 25& 77\\
169& 1& 38& 53& 1& 38& 79\\
169& 1& 12& 40& 1& 12& 51\\
169& 1& 25& 66& 1& 25& 79\\
192& 1& 23& 49& 1& 23& 71 & *\\
196& 1& 13& 29& 1& 13& 41 & *\\
196& 1& 13& 57& 1& 13& 69\\
196& 1& 41& 71& 1& 41& 85\\
196& 1& 13& 43& 1& 13& 55\\
196& 1& 13& 71& 1& 13& 83\\
196& 1& 27& 43& 1& 27& 69\\
196& 1& 27& 57& 1& 27& 83 & *\\
200& 1& 19& 41& 1& 19& 59 & *\\
200& 1& 19& 61& 1& 19& 79\\
242& 1& 21& 45& 1& 21& 65 & *
\end{tabular}
\qquad
\begin{tabular}[t]{c@{\;\;[}r@{,\,}r@{,\,}r@{\,]\quad[}r@{,\,}r@{,\,}r@{\,]\;\;}c}
$q$& $s_1$&$s_2$&$s_3$& $s_1'$&$s_2'$&$s_3'$\\ \hline
242& 1& 21& 67& 1& 21& 87 \\
242& 1& 21& 89& 1& 21&109 \\
242& 1& 43& 67& 1& 43&109 \\
242& 1& 43& 89& 1& 43&111 \\
245& 1& 34& 71& 1& 34&104 & *\\
256& 1& 15& 33& 1& 15& 47 & *\\
256& 1& 15& 81& 1& 15& 95 \\
256& 1& 31& 81& 1& 31&111 \\
256& 1& 47& 97& 1& 47&113 \\
256& 1& 15& 97& 1& 15&111 \\
256& 1& 31& 49& 1& 31& 79 \\
256& 1& 31& 65& 1& 31& 95 & *\\
289& 1& 16& 35& 1& 16& 50 & *\\
289& 1& 16& 86& 1& 16&101 \\
289& 1& 16&120& 1& 16&135 \\
289& 1& 33& 69& 1& 33&101 \\
289& 1& 33& 86& 1& 33&118 \\
289& 1& 50& 69& 1& 50&118 \\
289& 1& 50&103& 1& 50&137 \\
289& 1& 67& 86& 1& 67&137 \\
289& 1& 16& 52& 1& 16& 67 \\
289& 1& 16& 69& 1& 16& 84 \\
289& 1& 16&103& 1& 16&118 \\
289& 1& 33& 52& 1& 33& 84 \\
289& 1& 67&103& 1& 67&120 \\
289& 1& 33&103& 1& 33&135 \\
289& 1& 50& 86& 1& 50&135 \\
289& 1& 33&120& 1& 33&137 \\
294& 1& 41& 85& 1& 41&125 & *\\
300& 1& 29& 61& 1& 29& 89 & *\\
300& 1& 29& 91& 1& 29&119
\end{tabular}} \\[3mm]
Pairs marked with $*$  belong to the family to be given in Section~\ref{sec:families}.
\end{table}

\begin{table}
\caption{Pairs of $\norma{\cdot}^*$-isospectral $q$-congruence lattices of dimension $m=4$ for $q\leq 150$.}\label{table:m=4}
{
\begin{tabular}[t]{c@{\;\;[}r@{,\,}r@{,\,}r@{,\,}r@{\,]\quad[}r@{,\,}r@{,\,}r@{,\,}r@{\,]}}
$q$& $s_1$&$s_2$&$s_3$& $s_4$& $s_1'$&$s_2'$&$s_3'$&$s_4'$\\ \hline
 49& 1&  6&  8& 20& 1&  6&  8& 22 \\
 81& 1&  8& 10& 26& 1&  8& 10& 28 \\
 81& 1&  8& 10& 35& 1&  8& 10& 37 \\
 81& 1&  8& 19& 37& 1&  8& 26& 37 \\
 98& 1& 13& 15& 41& 1& 13& 15& 43 \\
100& 1&  9& 11& 29& 1&  9& 11& 31 \\
100& 1&  9& 21& 39& 1&  9& 29& 31 \\
121& 1& 10& 12& 32& 1& 10& 12& 34 \\
121& 1& 10& 12& 54& 1& 10& 12& 56 \\
121& 1& 10& 23& 56& 1& 10& 32& 56
\end{tabular}
\qquad
\begin{tabular}[t]{c@{\;\;[}r@{,\,}r@{,\,}r@{,\,}r@{\,]\quad[}r@{,\,}r@{,\,}r@{,\,}r@{\,]}}
$q$& $s_1$&$s_2$&$s_3$& $s_4$& $s_1'$&$s_2'$&$s_3'$&$s_4'$\\ \hline
121& 1& 10& 34& 54& 1& 10& 43& 45 \\
121& 1& 21& 23& 54& 1& 21& 23& 56 \\
121& 1& 10& 12& 43& 1& 10& 12& 45 \\
121& 1& 10& 23& 43& 1& 10& 32& 34 \\
121& 1& 10& 23& 45& 1& 10& 32& 54 \\
121& 1& 10& 23& 54& 1& 10& 32& 45 \\
121& 1& 10& 34& 56& 1& 10& 43& 56 \\
144& 1& 11& 13& 47& 1& 11& 13& 49 \\
144& 1& 11& 25& 59& 1& 11& 35& 49 \\
147& 1& 20& 22& 62& 1& 20& 22& 64
\end{tabular}}
\end{table}

Next we will attempt to explain in a unified manner the examples appearing in the tables.
Let $r$ and $t$ be positive integers and set $q=r^2t$, $r>1$.
We let $\ww= 1+rt$, considered as an element of $(\Z/q\Z)^\times$, the group of units of $\Z/q\Z$.
Then, the inverse of $\ww$ modulo $q$ is $\ww^{-1}:=1-rt$.
Clearly, for every $k \in \Z$,
\begin{equation*}
\ww^k \equiv 1+krt\pmod{q}.
\end{equation*}
In particular, $\ww$ has order $r$ in $(\Z/q\Z)^\times$.
For example, the pairs considered in Section~\ref{sec:families} can be written in the form
\begin{equation}\label{eq:r2t-inverses}
\mathcal L=\mathcal L(q;\ww^{0},\ww^{1}, \ww^{3})\qquad\text{and}\qquad
\mathcal L'=\mathcal L(q;\ww^{0},\ww^{-1}, \ww^{-3}).
\end{equation}

We note that all pairs in the tables have a description in terms of suitable powers of $\ww$ for some choices of $r$ and $t$ such that $q=r^2t$.
For instance, the simplest example in Table~\ref{table:m=3}, if we take $r=7$ and $t=1$ can be written as
\begin{equation}\label{eq6:ex-m=3-q=49}
\begin{array}{l}
\mathcal L(49;1,6,15)
    =
    \mathcal L(q;\ww^{0},-\ww^{-1}, \ww^{2})
    \cong_1
    \mathcal L(q;\ww^0, \ww^{1}, \ww^{3}),\\
\mathcal L(49;1,6,20)
    =
    \mathcal L(q;\ww^{0},-\ww^{-1}, -\ww^{-3})
    \cong_1
    \mathcal L(q;\ww^{0},\ww^{-1}, \ww^{-3}),
\end{array}
\end{equation}
where $\cong_1$ denotes isometric in $\norma{\cdot}$.
Indeed, in both cases we multiplied by an appropriate power of $\theta$ and then we reordered the terms.
Furthermore, the first pair in Table~\ref{table:m=4}, if $r=7$ and $t=1$ becomes
\begin{equation}\label{eq6:ex-m=4-q=49}
\begin{array}{l}
\mathcal L(49;1,6,8,20)
    =
    \mathcal L(q;\ww^0, -\ww^{-1}, \ww^{1}, -\ww^{-3})
    \cong_1
    \mathcal L(q;\ww^0, \ww^{2}, \ww^{3}, \ww^{4}),\\
\mathcal L(49;1,6,8,22)
    =
    \mathcal L(q;\ww^0, -\ww^{-1}, \ww^{1}, \ww^{3})
    \cong_1
    \mathcal L(q;\ww^0, \ww^{-2}, \ww^{-3}, \ww^{-4}).
\end{array}
\end{equation}

We point out that all examples shown in Tables~\ref{table:m=3} and \ref{table:m=4} respond to the following description:
\begin{equation}\label{eq6:inversos}
\mathcal L(q;\ww^{d_0},\ww^{d_1},\dots,\ww^{d_{m-1}})
\quad\text{and}\quad
\mathcal L(q;\ww^{-d_0},\ww^{-d_1},\dots,\ww^{-d_{m-1}}),
\end{equation}
where $q=r^2t$, $r>1$,  $\ww=1+rt$ and $0=d_0<d_1<\dots<d_{m-1}<r$.
However, note that for some choices of $m$, $r$ and $t$, there are sequences $0=d_0<d_1<\dots<d_{m-1}<r$ such that the lattices defined  as in \eqref{eq6:inversos} are not $\norma{\cdot}^*$-isospectral.
For example, this is the case when
$m=3$, $r=8$, $t=1$ and
$[d_0,d_1,d_2]=[0,1,4]$.

The following questions come up naturally.
\begin{question}\label{que6:conditions}
Give conditions on the sequence $0=d_0<d_1<\dots<d_{m-1}<r$ for   lattices as in \eqref{eq6:inversos} to be  $\norma{\cdot}^*$-isospectral.
\end{question}
\begin{question}
Are there examples of $\norma{\cdot}^*$-isospectral lattices that are not of the type in \eqref{eq6:inversos} for some choice of $\theta$?
\end{question}
\begin{question}\label{que6:families}
Are there families of  $\norma{\cdot}^*$-isospectral lattices having more than two elements?
\end{question}
We have carried out computations for small values of  $m$ and $q$ and in this search
we have not found any such family  yet.

Next, we give a particular sequence as in \eqref{eq6:inversos} that is very likely to give $\norma{\cdot}^*$-isospectral pairs in all dimensions under rather general conditions on $r$, for instance, if $r$ is prime. This motivation makes it worth showing that this sequence always gives non-isometric lattices.

\begin{prop}\label{lem6:non-isom}
Let $m\geq3$, $r\geq m+3$, $t\in\N$ and set $q=r^2t$ and $\ww=1+rt$, then the $q$-congruence lattices
\begin{equation}\label{eq6:inv-non-isom}
\mathcal L(q;\ww^0, \ww^2, \ww^{3}, \dots, \ww^{m})
\quad\text{and}\quad
\mathcal L(q;\ww^0, \ww^{-2}, \ww^{-3}, \dots, \ww^{-m})
\end{equation}
are not $\norma{\cdot}$-isometric.
\end{prop}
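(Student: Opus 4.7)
My plan is as follows. By combining Propositions~\ref{prop3:lens-isom} and \ref{prop3:isometrias}, the lattices in \eqref{eq6:inv-non-isom} are $\norma{\cdot}$-isometric exactly when there exist a permutation $\sigma$ of $\{0,1,\dots,m-1\}$, signs $\epsilon_j\in\{\pm1\}$, and a unit $u\in(\Z/q\Z)^\times$ such that
$$
\ww^{-d_{\sigma(j)}}\equiv u\,\epsilon_j\,\ww^{d_j}\pmod q\qquad (0\leq j\leq m-1),
$$
where $(d_0,d_1,\dots,d_{m-1})=(0,2,3,\dots,m)$ and hence $D:=\{d_0,\dots,d_{m-1}\}=\{0,2,3,\dots,m\}$. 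I aim to rule out the existence of any such $(\sigma,\epsilon,u)$.

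The first step exploits the identity $\ww^k\equiv 1+krt\pmod q$, valid for every $k$ since $(rt)^i\equiv 0\pmod{r^2t}$ when $i\geq 2$. This yields that $\ww$ has order exactly $r$ in $(\Z/q\Z)^\times$ and, using $r\geq m+3\geq 6$, that $\ww^k\equiv-1\pmod q$ has no solution (it would force $r\mid 2$). Since $u=\epsilon_j\,\ww^{-(d_{\sigma(j)}+d_j)}$ must be independent of $j$, setting $e_j:=d_{\sigma(j)}+d_j$ and taking ratios gives $\epsilon_j/\epsilon_0=\ww^{\,e_j-e_0}$; as this equals $\pm1$ while $\ww^k\neq-1$, I conclude that all $\epsilon_j$ agree and every $e_j$ is congruent modulo $r$ to a single integer $c\in\{0,1,\dots,r-1\}$.

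Next, I would study the bijection $\tau:D\to D$ defined by $\tau(d_j)=d_{\sigma(j)}$, which satisfies $x+\tau(x)\equiv c\pmod r$ for every $x\in D$. Since $x+\tau(x)\in[0,2m]\subset[0,2r)$, each $x$ admits one of two options: $\tau(x)=c-x$ (requiring $x\leq c$) or $\tau(x)=c+r-x$ (requiring $x\geq c+r-m$). Iterating once gives $\tau(\tau(x))\equiv x\pmod r$, so $\tau$ is an involution. Evaluating at $x=0$ forces $c=\tau(0)\in D$, i.e.\ $c\in\{0,2,3,\dots,m\}$.

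The main obstacle I foresee is the final case analysis, which crucially uses that $1\notin D$ together with the gap $r\geq m+3$. For $c=0$, the element $2\in D$ lies in the interval $(0,r-m)$ since $r-m\geq 3$, so neither option produces a value in $D$; for $c\in\{2,3,\dots,m-1\}$ the element $c+1\in\{3,\dots,m\}\subset D$ likewise lies strictly between $c$ and $c+r-m$, again a contradiction. The remaining value $c=m$ empties the second range and forces $\tau(x)=m-x$ on all of $D$; but then $\tau(m-1)=1\notin D$ while $m-1\in D$ (since $m\geq3$), a final contradiction. Hence no valid $(\sigma,\epsilon,u)$ exists and the two lattices cannot be $\norma{\cdot}$-isometric.
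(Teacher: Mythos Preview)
Your argument is correct. It differs in approach from the paper's proof, which introduces an invariant: to a lattice $\mathcal L(q;\ww^{d_0},\ww^{d_1},\dots,\ww^{d_{m-1}})$ with $0=d_0<d_1<\dots<d_{m-1}<r$ one associates the ordered partition $r=(d_1-d_0)+\dots+(d_{m-1}-d_{m-2})+(r-d_{m-1})$, and the paper asserts (leaving the verification to the reader, via Propositions~\ref{prop3:lens-isom} and \ref{prop3:isometrias}) that two such lattices are $\norma{\cdot}$-isometric if and only if their partitions differ by a cyclic shift. For the two lattices in question the partitions are $2+1+\dots+1+(r-m)$ and $1+\dots+1+2+(r-m)$, and since $r-m\geq 3$ these are not cyclic shifts of each other. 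Your proof bypasses this invariant and works directly with the algebraic isometry condition: after reducing to a bijection $\tau$ on $D=\{0,2,3,\dots,m\}$ satisfying $x+\tau(x)\equiv c\pmod r$, you eliminate each possible value of $c$ by locating an element of $D$ in the forbidden gap $(c,\,c+r-m)$ or by forcing $\tau(m-1)=1\notin D$. The paper's route is shorter once the partition criterion is granted, and it packages the information in a reusable form; your route is more self-contained, since it does not rely on the unproved ``one can check'' step, and makes explicit exactly where the hypotheses $r\geq m+3$ and $1\notin D$ enter.
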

\begin{proof}
In general, if $0=d_0<d_1<\dots<d_{m-1}<r$, we associate to $\mathcal L(q;\ww^{d_0},\ww^{d_1},\dots,\ww^{d_{m-1}})$ the following ordered partition of $r$:
$$
r=(d_1-d_0)+\dots+(d_{m-1}-d_{m-2})+(r-d_{m-1}).
$$
By using Propositions~\ref{prop3:lens-isom} and \ref{prop3:isometrias}, one can check that two lens spaces with partitions $r=a_1+\dots+a_m$ and $r=b_1+\dots+b_m$ are $\norma{\cdot}$-isometric if and only if there is $l \in \Z$ such that $a_j=b_{j+l}$ for every $j$, where the index $j+l$ is taken in the interval  $[0,m-1]$$ \pmod m$.

In our case, the ordered partitions for the lattices $\mathcal L(q;\ww^0, \ww^2, \ww^{3}, \dots, \ww^{m})$ and $\mathcal L(q;\ww^0, \ww^{-2}, \ww^{-3}, \dots, \ww^{-m}) {\cong}_1 \mathcal L(q;\ww^0, \ww^{1}, \dots, \ww^{m-2},\ww^{m})$ are
\begin{align*}
r=2+1+\dots+1+(r-m),\\
r=1+\dots+1+2+(r-m).
\end{align*}
The assertion now follows since $r-m\geq3$.
\end{proof}

\section{Families of $\norma{\cdot}^*$-isospectral lattices} \label{sec:families}

The goal of this section is to construct an infinite two-parameter family of pairs of  $\norma{\cdot}^*$-isospectral lattices in $\Z^m$ for $m=3$.
Together with Theorem~\ref{thm3:characterization}~(ii), this will produce infinitely many pairs of non-isometric $5$-dimensional lens spaces, isospectral on $p$-forms for every $p$.
Although our construction does not give all of the examples for $m=3$, the list given in Section~\ref{sec:examples} shows that most of the examples can be obtained by a slight variation of the method used in this section.

Throughout this section, we fix $r,t\in\N$, $r>1$. We single out (see \eqref{eq3:Lambda(q;s)}) the congruence lattices
\begin{equation}
\begin{array}{rcl}
\mathcal L  &=& \mathcal L(\;r^2t;\;1,\;1+rt,\;1+3rt),\\
\mathcal L' &=& \mathcal L(\;r^2t;\;1,\;1-rt,\;1-3rt).
\end{array}
\end{equation}
In other words, $\mathcal L$ and $\mathcal L'$ are defined by the equations
\begin{equation}\label{eq5:L-L'}
\begin{array}{rc@{\;}c@{\;}c@{\;}c@{\;}c@{\;\;}c@{\;\;}l}
\mathcal L:&\quad
    a&+&(1+rt)b&+&(1+3rt)c&\equiv &0 \pmod{r^2t},\\[1mm]
\mathcal L':&\quad
a&+&(1-rt)b&+&(1-3rt)c&\equiv &0 \pmod{r^2t},
\end{array}
\end{equation}
or equivalently by
\begin{equation}\label{eq5:L-L'2}
\begin{array}{rc@{\;}c@{\;}c@{\;}c@{\;}c@{\;\;}c@{\;\;}l}
\mathcal L:&\quad
    a+b+c&+rt(b+3c)&\equiv &0 \pmod{r^2t},\\[1mm]
\mathcal L':&\quad
    a+b+c&-rt(b+3c)&\equiv &0 \pmod{r^2t}.
\end{array}
\end{equation}

Our goal is to prove that $\mathcal L$ and $\mathcal L'$ are $\norma{\cdot}^*$-isospectral for every $r$ not divisible by $3$.
By Proposition~\ref{lem6:non-isom}, $\mathcal L$ and $\mathcal L'$ are non-isometric for $r\geq6$.
The first pair in this family is for $r=7$ and $t=1$, namely $\mathcal L=\mathcal L(49;1,8,22)$ and $\mathcal L'=\mathcal L(49;1,-6,-20)$.
We point out that this pair is isometric to the simplest pair in Table~\ref{table:m=3} by \eqref{eq6:ex-m=3-q=49}.

We recall that $\mathcal L$ and $\mathcal L'$ are said to be $\norma{\cdot}^*$-isospectral if $N_{\mathcal L}(k,\zz ) = N_{\mathcal L'}(k,\zz )$ for every $k\in\N$ and every $0\leq \zz \leq m=3$ (where $N_{\mathcal L}(k,\zz )$ is as in \eqref{eq:Nkz}).
We shall first prove that this equality holds easily for $\zz =1,2,3$.

\begin{lemma}\label{lem:z=1,2,3}
For any $\zz =1,2,3$ and any $k \in \N$, one has that $N_\mathcal L (k,\zz )= N_{\mathcal L'} (k,\zz )$.
\end{lemma}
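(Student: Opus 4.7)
The plan is to dispose of each value of $\zz$ separately, the first two being essentially trivial. For $\zz=3$ the only lattice element is the origin, so $N_\mathcal L(k,3)=N_{\mathcal L'}(k,3)$ is evident. For $\zz=2$, any element with exactly two zero coordinates has the form $\pm k\,e_i$, and it lies in $\mathcal L$ (resp.\ $\mathcal L'$) iff $s_i(\pm k)\equiv 0\pmod{r^2t}$, where $s_i$ is the $i$-th defining coefficient. Since each of $1$, $1\pm rt$, $1\pm 3rt$ is $\equiv 1$ modulo both $r$ and $t$, it is coprime to $r^2t$, so the condition reduces to $r^2t\mid k$, giving the same count in both lattices.

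The substantive case is $\zz=1$. The plan is to split $\{\mu\in\mathcal L:Z(\mu)=1\}$ into three slices according to the position of the unique zero coordinate, and to exhibit for each slice a norm-preserving involution of $\Z^3$ carrying the $\mathcal L$-slice bijectively onto the corresponding $\mathcal L'$-slice. Concretely, I propose
\begin{equation*}
(a,b,0)\,\longleftrightarrow\,(b,a,0),\qquad
(a,0,c)\,\longleftrightarrow\,(c,0,a),\qquad
(0,b,c)\,\longleftrightarrow\,(0,-c,-b).
\end{equation*}
Each visibly preserves $\norma{\cdot}$ and $Z$ and is its own inverse. Checking that they swap $\mathcal L$ with $\mathcal L'$ reduces to the identity $(1+\alpha rt)(1-\alpha rt)=1-\alpha^2 r^2 t^2\equiv 1\pmod{r^2t}$ for every $\alpha\in\Z$ (valid because $r^2t\mid r^2 t^2$), i.e.\ $1\mp\beta rt$ is the inverse of $1\pm\beta rt$ modulo $r^2t$ for $\beta\in\{1,3\}$. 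For the first map, $a\equiv-(1+rt)b$ gives $(1-rt)a\equiv-b$, which is the $\mathcal L'$-condition for $(b,a,0)$; the second is analogous using $\beta=3$.

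The main obstacle, modest as it is, lies in the third slice: the naive permutation $(0,b,c)\mapsto(0,c,b)$ does not send $\mathcal L$ into $\mathcal L'$, so one must insert signs. The verification there needs both the $\beta=1$ and $\beta=3$ versions of the identity in tandem: starting from $(1+rt)b+(1+3rt)c\equiv 0$, invert $1+rt$ to obtain $b\equiv -(1-rt)(1+3rt)c\equiv -(1+2rt)c\pmod{r^2t}$, and a direct substitution then collapses $(1-rt)(-c)+(1-3rt)(-b)$ to a multiple of $r^2t$, i.e.\ $(0,-c,-b)\in\mathcal L'$. This finishes the three bijections and hence the lemma.
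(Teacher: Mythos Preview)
Your proof is correct and follows essentially the same approach as the paper: dispose of $\zz=2,3$ trivially, then exhibit $\norma{\cdot}$-preserving bijections between the three coordinate-hyperplane slices of $\mathcal L$ and $\mathcal L'$. One small correction: contrary to your claim, the ``naive'' permutation $(0,b,c)\mapsto(0,c,b)$ \emph{does} carry $\mathcal L$ to $\mathcal L'$---multiplying the $\mathcal L$-condition $(1+rt)b+(1+3rt)c\equiv 0$ by $(1-4rt)$ gives $(1-3rt)b+(1-rt)c\equiv 0\pmod{r^2t}$, which is exactly the $\mathcal L'$-condition for $(0,c,b)$ (and this is in fact the map the paper uses). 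Your signed version $(0,b,c)\mapsto(0,-c,-b)$ works too, of course, since it differs from the unsigned one by the central symmetry of $\mathcal L'$.
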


\begin{proof}
The assertion is clear for $\zz =3$. Also,  it is easy to check for $\zz =2$ since the elements $(sr^2t,0,0),(0,sr^2t,0),(0,0,sr^2t)$ for $s\in\Z$, $s\neq0$ are the only ones in both lattices having exactly two coordinates equal to zero.

For $\zz =1$, it is not hard to give a $\norma{\cdot}$-preserving bijection between the sets $\{\eta\in\mathcal L:Z(\eta)=1\}$ and $\{\eta'\in\mathcal L':Z(\eta')=1\}$. Namely one has
\begin{align*}
(a,b,0)\in\mathcal L\quad&\Longleftrightarrow\quad (b,a,0)\in\mathcal L', \\
(a,0,c)\in\mathcal L\quad&\Longleftrightarrow\quad (c,0,a)\in\mathcal L', \\
(0,b,c)\in\mathcal L\quad&\Longleftrightarrow\quad (0,c,b)\in\mathcal L',
\end{align*}
for every nonzero integers $a$, $b$ and $c$.
For example, $(a,b,0)\in\mathcal L$ $\iff$ $a+(1+rt)b\equiv0\pmod{r^2t}$ $\iff$ $(1-rt)a+b\equiv0\pmod{r^2t}$ $\iff$ $(b,a,0)\in\mathcal L$.
The second and the third rows follow in a similar way, multiplying by $1-3rt$ and $1-4rt$ respectively.
\end{proof}

\begin{rem}
It now remains to prove that $N_\mathcal L (k,0) = N_{\mathcal L'}(k,0)$ for every $k$, which,
by Lemma~\ref{lem:z=1,2,3}, is equivalent to show that $\mathcal L$ and $\mathcal L'$ are $\norma{\cdot}$-isospectral, since $N_{\mathcal L} (k) = \sum _{\zz =0}^3 N_\mathcal L (k,\zz )$.
We see that, remarkably, in light of Theorem~\ref{thm3:characterization}~(i), the previous lemma allows us to reduce the verification of $p$-isospectrality for all $p$ of the associated lens spaces, to prove that they are just $0$-isospectral.
\end{rem}

\begin{thm}\label{thm4:isolattices}
For any $r$ and $t$ positive integers with $r\not\equiv0 \pmod3$, the lattices $\mathcal L$ and $\mathcal L'$ in \eqref{eq5:L-L'} are $\norma{\cdot}^*$-isospectral.
\end{thm}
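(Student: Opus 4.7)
By Lemma \ref{lem:z=1,2,3} we already have $N_\mathcal L(k,\zz)=N_{\mathcal L'}(k,\zz)$ for $\zz\in\{1,2,3\}$ and every $k\in\N$. Since $N_\mathcal L(k)=\sum_{\zz=0}^{3}N_\mathcal L(k,\zz)$, proving $\norma{\cdot}^*$-isospectrality reduces to proving $\norma{\cdot}$-isospectrality, i.e., that $N_\mathcal L(k)=N_{\mathcal L'}(k)$ for every $k\in\N$. This is the route I would follow, as already suggested in the remark preceding the theorem.

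The plan is to set up an explicit counting procedure for $N_\mathcal L(k)$ (and analogously for $N_{\mathcal L'}(k)$) amenable to a direct comparison. Starting from \eqref{eq5:L-L'2}, every $(a,b,c)\in\mathcal L$ is uniquely of the form
\[
(a,b,c)=\bigl(r^2 t\,m-(1+rt)b-(1+3rt)c,\;b,\;c\bigr), \qquad (b,c,m)\in\Z^3,
\]
with norm $|r^2 t\,m-(1+rt)b-(1+3rt)c|+|b|+|c|$; the analogous parametrization describes $\mathcal L'$ with $rt$ replaced by $-rt$. I would then split $N_\mathcal L(k)$ into contributions indexed by the sign patterns of $(a,b,c)\in\{\pm1\}^3$; by the involution $v\mapsto -v$ preserving each lattice, only four patterns need to be analysed. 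Within each sign orthant the absolute values collapse into explicit linear forms in $b,c,m$, and the count becomes the cardinality of a set of integer triples satisfying linear inequalities together with one linear equation encoding $\norma{\cdot}=k$.

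The decisive step is to construct, inside each sign pattern, a norm-preserving bijection from the triples $(b,c,m)$ parametrising $\mathcal L$ onto the triples $(b',c',m')$ parametrising $\mathcal L'$. The algebraic identity driving it is
\[
(1+rt)(1-rt)\equiv 1\pmod{r^2 t},
\]
which shows that multiplication by $1-rt$ on $\Z/r^2 t\Z$ swaps the congruence conditions defining $\mathcal L$ and $\mathcal L'$, up to the factor $3$ appearing in the third coordinate. The hypothesis $\gcd(r,3)=1$ enters precisely here: it guarantees that $3$ is invertible modulo $r$, so that the resulting linear change of variables on $(b,c)\bmod r$ is a bijection, and therefore the residue classes of $(b,c)$ can be matched up consistently across the two lattices.

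The main obstacle I anticipate is the careful bookkeeping: once the change of variables is applied, the linear inequalities that select each sign pattern have to be tracked through, and one must check that no parameters are lost or double-counted, and that the bijection truly preserves the linear form $|a|+|b|+|c|$. Once this is carried out uniformly in the sign patterns, summing the resulting identities yields $N_\mathcal L(k)=N_{\mathcal L'}(k)$ for every $k$, which combined with Lemma \ref{lem:z=1,2,3} gives the $\norma{\cdot}^*$-isospectrality asserted.
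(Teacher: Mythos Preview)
Your reduction via Lemma~\ref{lem:z=1,2,3} and your plan to split by sign patterns of $(a,b,c)$ are exactly the framework the paper uses. The divergence, and the gap, is at what you call ``the decisive step''. You propose to build a norm-preserving bijection in each octant driven by the identity $(1+rt)(1-rt)\equiv 1\pmod{r^2t}$, but this identity only tells you that multiplying the defining parameters $s_j$ by $1-rt$ rewrites the \emph{same} lattice with different parameters; it does not give a map on $\Z^3$ that sends $\mathcal L$ to $\mathcal L'$ while preserving $\norma{\cdot}$. Your proposed ``linear change of variables on $(b,c)\bmod r$'' is never specified, and there is no evident candidate that respects both the sign constraints of an octant and the value of $|a|+|b|+|c|$. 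You yourself flag this as the main obstacle, and indeed it is the entire content of the theorem; the proposal stops precisely where the real work begins.

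For comparison, the paper does \emph{not} exhibit a global bijection. In the all-positive octant it parametrizes points as $(k-x,x-y,y)$ with $0<y<x<k$, reduces (when $rt\mid k$, say $k=\omega rt$) to counting solutions of $x+2y\equiv\pm\omega\pmod r$ in a triangle, tiles that triangle by $r\times r$ squares and small triangles, and proves a lemma computing $A(r,\xi)=\#\{0<y<x<r:x+2y\equiv\xi\pmod r\}$ explicitly; the hypothesis $3\nmid r$ enters exactly here, through floor terms such as $\lfloor(\xi-1)/3\rfloor+\lfloor(r+\xi-1)/3\rfloor+\lfloor(2r+\xi-1)/3\rfloor$, and yields $A(r,\xi)=A(r,-\xi)$. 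In the mixed-sign octants the paper writes $N_{\mathcal L}^{\pm\pm\pm}(k)$ as an explicit finite sum indexed by a parameter $\alpha$, obtains the analogous sum for $\mathcal L'$, and shows the difference vanishes by pairing the term for $\alpha$ in one sum with the term for $\omega-\alpha$ (or $2\omega-\alpha$) in the other. So the matching is not a bijection of lattice points but a reflection of an auxiliary index, and even that requires several parity cases in $rt$ and $k$. If you want to salvage the bijection idea you would need to produce, octant by octant, an explicit affine map on the parametrizing triangle that swaps the two congruences and preserves the domain; absent that, an explicit count as in the paper is what is actually required.
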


\begin{proof}
By Lemma~\ref{lem:z=1,2,3}, it remains to prove that $N_\mathcal L (k,0) = N_{\mathcal L'}(k,0)$ for every $k$.
This is clearly true for $k=0$, hence we will assume that $k>0$.
The proof consists in showing that the number of elements with a fixed one-norm in each octant is the same for both lattices.
Since lattices have central symmetry, we have
\begin{align*}
  \tfrac12 \,N_{\mathcal L}(k,0) = N_{\mathcal L}^{{+}{+}{+}}(k) + N_{\mathcal L}^{{+}{+}{-}}(k) + N_{\mathcal L}^{{+}{-}{+}}(k) + N_{\mathcal L}^{{+}{-}{-}}(k),
\end{align*}
where the signs in the supra-indexes indicate the signs of the coordinates.
That is,  $N_{\mathcal L}^{{+}{+}{-}}(k)$ is the number of $\eta=(a,b,c)\in\mathcal L$ such that $\norma{\eta}=k$, $a>0$, $b>0$ and $c<0$.
We will show that $N_{\mathcal L}^{{+}{+}{+}}(k)=N_{\mathcal L'}^{{+}{+}{+}}(k)$, $N_{\mathcal L}^{{+}{+}{-}}(k)=N_{\mathcal L'}^{{+}{+}{-}}(k)$ and so on.

We first examine the octant ${+}{+}{+}$.
Any vector here has the form
\begin{equation}\label{eq5:eta+++}
\eta=(k-x,x-y,y)
\qquad\text{with} \quad 0<y<x<k.
\end{equation}
By \eqref{eq5:L-L'2}, we have that $\eta\in\mathcal L$ (resp.\ $\eta\in\mathcal L'$) if and only if $k+rt(x+2y)\equiv 0\pmod{r^2t}$ (resp.\ $k-rt(x+2y)\equiv 0\pmod{r^2t}$).
Thus $N_{\mathcal L}^{{+}{+}{+}}(k)=N_{\mathcal L'}^{{+}{+}{+}}(k)=0$ unless $k$ is divisible by $rt$.
We write $k=\omega rt$ for some positive integer $\omega$.
Then
\begin{equation}\label{eq5:+++}
\begin{array}{lcr@{\;}l}
\eta\in\mathcal L&
\Longleftrightarrow &
&x+2y\equiv -\omega\pmod{r},
\\[1mm]
\eta\in\mathcal L'&
\Longleftrightarrow &
&x+2y\equiv \;\;\,\omega\pmod{r}.
\end{array}
\end{equation}

In order to count the number of solutions in \eqref{eq5:+++}, we split the set of integer points $(x,y)$ satisfying $0<y<x<k$ into squares and triangles as follows.
We take the squares $\{(x,y): \alpha r\leq x<(\alpha+1)r, \; \beta r<y\leq (\beta+1)r\}$ for $1\leq\beta<\alpha\leq \omega t-1$, together with the triangles $\{(x,y): \alpha r<x<(\alpha+1),\; \alpha r<y<x\}$ for $0\leq\alpha\leq wt-1$.
We note that the points which are the upper-left corner of the squares near the diagonal (when $\alpha=\beta+1$) are not in the orginal set; this will be taken into account in the computations.
There are $\binom{\omega t}{2}$ squares and $\omega t$ triangles.
Set
\begin{equation*}
A(r,\xi)=\#\{(x,y)\in\Z^2: 0<y<x<r, \; x+2y\equiv\xi\pmod r\}.
\end{equation*}
Since we are working modulo $r$, the number of elements of $\mathcal L$ (resp.\ $\mathcal L'$) in any triangle is always the same and is equal to  $A(r,-\omega)$ (resp.\ $A(r,\omega)$).
Thus we have $\omega t A(r,-\omega)$ (resp.\ $\omega t A(r,\omega)$) elements in $\mathcal L$ (resp.\ $\mathcal L'$) in the union of all triangles.
On the other hand, if $\omega\not\equiv 0\pmod r$, there are exactly $r$ elements in $\mathcal L$ (or in $\mathcal L'$) in each square, thus we have $\binom{\omega t}{2} r$ elements in $\mathcal L$ (or in $\mathcal L'$) and in the union of all the squares.
When $\omega\equiv 0\pmod r$, one has the same quantity minus $\omega t-1$ elements, since, as noticed above, we have to exclude the vertices $(\alpha \omega t,(\alpha+1) \omega t)$ for $1\leq \alpha\leq \omega t-1$ which lie
in the squares next to the diagonal $x=y$.
Summing up, we get
\begin{equation*}
N_{\mathcal L}^{{+}{+}{+}}(\omega rt) =
\begin{cases}
\omega t\, A(r,-\omega) + \binom{\omega t}{2} r
  &\quad\text{if $\omega\not\equiv 0\pmod r$},\\
\omega t\, A(r,-\omega) + \binom{\omega t}{2} r-\omega t +1
  &\quad\text{if $\omega\equiv 0\pmod r$},
\end{cases}
\end{equation*}
and the same for $N_{\mathcal L'}^{{+}{+}{+}}(\omega rt)$ replacing $A(r,-\omega)$ by $A(r,\omega)$.
The next lemma gives a formula for $A(r,\xi)$ showing that $A(r,\omega)=A(r,-\omega)$, hence $N_{\mathcal L}^{{+}{+}{+}}(\omega rt)=N_{\mathcal L'}^{{+}{+}{+}}(\omega rt)$.

\begin{lemma}\label{lem5:A(r,omega)}
Let $r$ and $\xi$ be integers such that $r\not\equiv0\pmod3$.
If $r$ is odd, then
\begin{equation*}
A(r,\xi) =
\begin{cases}
\tfrac{r-3}{2}
\quad&\text{if $\xi\not\equiv0\pmod r$,} \\
\tfrac{r-1}{2}
\quad&\text{if $\xi\equiv0\pmod r$.}
\end{cases}
\end{equation*}
If $r$ is even, then
\begin{equation*}
A(r,\xi) =
\begin{cases}
\tfrac{r}2-1
\quad&\text{if $\xi\not\equiv0\pmod r$ and $\xi$ is odd,}\\
\tfrac{r}2-2
\quad&\text{if $\xi\not\equiv0\pmod r$ and $\xi$ is even,}\\
\tfrac{r}2-1
\quad&\text{if $\xi\equiv0\pmod r$.}
\end{cases}
\end{equation*}
\end{lemma}
We will often use the standard notation $\lfloor u\rfloor=\max\{d\in\Z: d\leq u\}$ and $\lceil u\rceil=\min\{d\in\Z: d\geq u\}$ for the floor and ceiling of a real number respectively.
\begin{proof}
We may assume that $0\leq \xi<r$.
Suppose that $x+2y\equiv \xi\pmod r$; thus, $x=\gamma r+\xi-2y$ for some $\gamma\in\Z$.
One can check that $1\leq\gamma\leq 2$ if $\xi=0$ and $0\leq \gamma\leq 2$ if $\xi>0$, since $0<y<x<r$.
Furthermore, the restrictions $y<x$ and $x<r$ are equivalent to
\begin{equation}\label{eq5:intervalo-beta}
\begin{array}{rcl@{\hspace{10mm}}rcl}
y+1&\leq& \gamma r+\xi-2y,  &
    \gamma r+\xi-2y&\leq& r-1\\[1mm]
y&\leq &\tfrac{\gamma r+\xi-1}{3}, &
    \tfrac{(\gamma-1) r+\xi+1}{2}&\leq& y\\[1mm]
y&\leq &\lfloor\tfrac{\gamma r+\xi-1}{3}\rfloor, &
    \lceil\tfrac{(\gamma-1) r+\xi+1}{2}\rceil&\leq& y.
\end{array}
\end{equation}
If $\xi=0$, then $\gamma=1$ implies $1\leq y\leq \lfloor\tfrac{r-1}{3}\rfloor$ and $\gamma=2$ implies $\lceil\tfrac{r+1}{2}\rceil\leq y\leq \lfloor\tfrac{2r-1}{3}\rfloor$, thus
$$
A(r,0) = \lfloor\tfrac{r-1}{3}\rfloor + \lfloor\tfrac{2r-1}{3}\rfloor+1-\lceil\tfrac{r+1}{2}\rceil = r-\lceil\tfrac{r+1}{2}\rceil,
$$
which is our assertion for $\xi\equiv0\pmod r$.

Similarly, if $\xi>0$, then $\gamma=0$ implies $1\leq y\leq \lfloor\tfrac{\xi-1}{3}\rfloor$, $\gamma=1$ implies $\lceil\tfrac{\xi+1}{2}\rceil\leq y\leq \lfloor\tfrac{r+\xi-1}{3}\rfloor$ and $\gamma=2$ implies $\lceil\tfrac{r+\xi+1}{2}\rceil\leq y\leq \lfloor\tfrac{2r+\xi-1}{3}\rfloor$, thus
$$
A(r,\xi) = \lfloor\tfrac{\xi-1}{3}\rfloor+ \lfloor\tfrac{r+\xi-1}{3}\rfloor + \lfloor\tfrac{2r+\xi-1}{3}\rfloor
+2-\lceil\tfrac{\xi+1}{2}\rceil -\lceil\tfrac{r+\xi+1}{2}\rceil
=r+\xi-\big( \lceil\tfrac{\xi+1}{2}\rceil +\lceil\tfrac{r+\xi+1}{2}\rceil\big).
$$
The rest of the proof is straightforward.
\end{proof}

We continue with the proof of Theorem~\ref{thm4:isolattices}, now considering the octant ${+}{-}{-}$.
Any vector in this octant can be written as
\begin{equation}\label{eq5:eta+--}
\eta=(k-x,y-x,-y)
\qquad\text{with} \quad 0<y<x<k,
\end{equation}
then, by \eqref{eq5:L-L'2}, we have that
\begin{equation}\label{eq5:+--1}
\begin{array}{lcr@{\;}l}
\eta\in\mathcal L&
\Longleftrightarrow &
&\;\;\,rt(x+2y)\equiv k-2x\pmod{r^2t},
\\[1mm]
\eta\in\mathcal L'&
\Longleftrightarrow &
&-rt(x+2y)\equiv k-2x\pmod{r^2t}.
\end{array}
\end{equation}
In both cases we have $2x\equiv k\pmod{rt}$.
We fix $k=\omega rt+k_0$ with $0\leq k_0<rt$, thus $x$ must satisfy $2x\equiv k_0\pmod{rt}$.

We first assume that $rt$ is odd.
Then there exists only one $x_0$ satisfying $2x_0\equiv k_0\pmod{rt}$ and $0\leq x_0<rt$.
We write any other solution as $x=\alpha rt+x_0$ for some $\alpha$.
The restriction $0<x<k$ is equivalent to
\begin{equation}\label{eq5:intervalo-alpha}
\begin{array}{rcccl}
1&\leq& \alpha rt+x_0 &\leq &\omega rt+k_0-1, \\[1mm]
-\lfloor\tfrac{x_0-1}{rt}\rfloor=\lceil\tfrac{1-x_0}{rt}\rceil &\leq& \alpha &\leq& \omega+ \lfloor\tfrac{k_0-1-x_0}{rt}\rfloor.
\end{array}
\end{equation}
On the other hand, by \eqref{eq5:+--1}, we have that
\begin{equation}\label{eq5:+--2}
\begin{array}{lcr@{\;}l}
\eta\in\mathcal L&
\Longleftrightarrow &
2y\equiv &-x_0+\left(\omega-2\alpha + \tfrac{k_0-2x_0}{rt}\right) \pmod{r},
\\[1mm]
\eta\in\mathcal L'&
\Longleftrightarrow &
2y\equiv &-x_0-\left(\omega-2\alpha + \tfrac{k_0-2x_0}{rt}\right) \pmod{r}.
\end{array}
\end{equation}
Since $r$ is odd, these equations always have a solution $y$, which is unique modulo $r$.
For each $\alpha$ satisfying \eqref{eq5:intervalo-alpha}, denote respectively by $y_\alpha$ and $y_\alpha'$ the solutions of \eqref{eq5:+--2} for $\mathcal L$ and $\mathcal L'$ such that $0\leq y_\alpha,y_\alpha'<r$.
We write the solutions as $y=\beta r+y_\alpha$ and $y'=\beta' r+y_{\alpha'}$.
Now, the restriction $0<y<x$ is equivalent to
\begin{equation}\label{eq5:intervalo-beta}
\begin{array}{rcccl}
1&\leq& \beta r+y_\alpha &\leq &\alpha rt+x_0-1, \\[1mm]
-\lfloor\tfrac{y_\alpha-1}{r}\rfloor=\lceil\tfrac{1-y_\alpha}{r}\rceil &\leq& \beta &\leq& \alpha t+ \lfloor\tfrac{x_0-1-y_\alpha}{r}\rfloor.
\end{array}
\end{equation}
Hence
\begin{equation}\label{eq5:N_L}
N_{\mathcal L}^{{+}{-}{-}}(k)
= \sum_{\alpha=-\lfloor\tfrac{x_0-1}{rt}\rfloor}^{\omega+ \lfloor\tfrac{k_0-1-x_0}{rt}\rfloor}
    \left(\alpha t+1+ \lfloor \tfrac{x_0-1-y_\alpha}{r} \rfloor+\lfloor \tfrac{y_\alpha-1}r\rfloor\right).
\end{equation}
The same formula holds for $N_{\mathcal L'}^{{+}{-}{-}}(k)$ replacing $y_\alpha$ by $y_\alpha'$.
Then $N_{\mathcal L}^{{+}{-}{-}}(k)-N_{\mathcal L'}^{{+}{-}{-}}(k)$ is equal to
\begin{equation}\label{eq5:N_L-N_L'}
C:=\sum_{\alpha=-\lfloor\tfrac{x_0-1}{rt}\rfloor}^{\omega+ \lfloor\tfrac{k_0-1-x_0}{rt}\rfloor}
    \left(\lfloor \tfrac{x_0-1-y_\alpha}{r} \rfloor+\lfloor \tfrac{y_\alpha-1}r\rfloor-\lfloor \tfrac{x_0-1-y_\alpha'}{r} \rfloor-\lfloor \tfrac{y_\alpha'-1}r\rfloor\right)
\end{equation}

The proof in the case when $rt$ is odd will be completed by showing that $C=0$.
We first suppose that $k_0$ is even and nonzero, thus $k_0=2x_0$ with $0<x_0<rt/2$ and $x_0<k_0$.
Then, \eqref{eq5:N_L-N_L'} implies that
\begin{equation*}
C=\sum_{\alpha=0}^{\omega}
    \left(\lfloor \tfrac{x_0-1-y_\alpha}{r} \rfloor
    -\lfloor \tfrac{x_0-1-y_\alpha'}{r} \rfloor
    +\lfloor \tfrac{y_\alpha-1}r\rfloor
    -\lfloor \tfrac{y_\alpha'-1}r\rfloor\right).
\end{equation*}
But a careful look at \eqref{eq5:+--2} shows that the solutions of both equations are related by the equation $y_\alpha'=y_{\omega-\alpha}$ for every $0\leq \alpha\leq\omega$; hence, $C=0$.

If $k_0=0$, then $x_0=0$ and the sum in \eqref{eq5:N_L-N_L'} runs through the interval $1\leq\alpha\leq \omega-1$.
Hence, $C=0$ since $y_\alpha'=y_{\omega-\alpha}$ for every $1\leq \alpha\leq\omega-1$ by \eqref{eq5:+--2}.

Now suppose that $k_0$ is odd, then $k_0=2x_0-rt$ with $rt/2\leq x_0<rt$ and $k_0<x_0$.
In this case the sum in \eqref{eq5:N_L-N_L'} runs through the interval $0\leq\alpha\leq\omega-1$ and $y_\alpha'=y_{\omega-1-\alpha}$ for every $1\leq\alpha\leq \omega-1$ by \eqref{eq5:+--2}, hence $C=0$.

We now assume that $rt$ is even.
We recall that $x$ must satisfy $2x\equiv k_0\pmod{rt}$.
Clearly, when $k$ is odd, $N_{\mathcal L}^{{+}{-}{-}}(k) = N_{\mathcal L'}^{{+}{-}{-}}(k)=0$; thus, we assume that $k$ is even.
Let $x_0$ be the only integer such that $2x_0\equiv k_0\pmod{rt}$ and $0\leq x_0<\frac{rt}{2}$.
Thus $x_0=k_0/2\leq k_0$ and a general solution has the form $x=\alpha\frac{rt}{2}+x_0$.
Similarly, as in \eqref{eq5:intervalo-alpha}, one can check that the restriction $0<x<k$ is equivalent to $-\lfloor2\tfrac{x_0-1}{rt}\rfloor \leq \alpha\leq 2\omega + \lfloor2\tfrac{k_0-x_0-1}{rt}\rfloor$, or more precisely, $1\leq \alpha\leq 2\omega-1$ if $x_0=0$ and, $0\leq \alpha \leq 2\omega$ if $x_0>0$.
Thus in this case, from \eqref{eq5:+--1} we have that
\begin{equation}\label{eq5:+--3}
\begin{array}{lcr@{\;}l}
\eta\in\mathcal L&
\Longleftrightarrow &
2y\equiv &-x_0-\alpha\tfrac{rt}{2}+(\omega-\alpha) \pmod{r},
\\[1mm]
\eta\in\mathcal L'&
\Longleftrightarrow &
2y\equiv &-x_0-\alpha\tfrac{rt}{2}-(\omega-\alpha) \pmod{r}.
\end{array}
\end{equation}
If $r$ is odd, then both equations always have a solution $y$, which is unique modulo $r$.
When $r$ is even, we assume that $-x_0-\alpha\tfrac{rt}{2}+\omega-\alpha$ is even since both equations do not have any solution otherwise.
Thus, equations in \eqref{eq5:+--3} have unique solutions modulo $\tfrac r2$.
Let $y_\alpha$ and $y_\alpha'$ be the smallest non-negative solutions of \eqref{eq5:+--3} for $\mathcal L$ and $\mathcal L'$ respectively.
A similar argument as in \eqref{eq5:intervalo-beta} implies that $N_{\mathcal L}^{{+}{-}{-}}(k)$ is equal to the sum over $-\lfloor2\tfrac{x_0-1}{rt}\rfloor \leq \alpha\leq 2\omega + \lfloor2\tfrac{k_0-x_0-1}{rt}\rfloor$ of the terms
\begin{equation}
\begin{cases}
\alpha\tfrac{t}{2} +1 +\lfloor\tfrac{x_0-y_\alpha-1}{r}\rfloor+ \lfloor\tfrac{y_\alpha-1}{r}\rfloor
    &\text{ if $r$ is odd},\\
\alpha t +1 +\lfloor2\tfrac{x_0-y_\alpha-1}{r}\rfloor+ \lfloor2\tfrac{y_\alpha-1}{r}\rfloor
    &\text{ if $r$ is even},
\end{cases}
\end{equation}
and the same formula holds for $N_{\mathcal L'}^{{+}{-}{-}}(k)$ replacing $y_\alpha$ by $y_\alpha'$.
But, for arbitrary $r$, \eqref{eq5:+--3} implies that $y_\alpha' = y_{2\omega-\alpha}$ for every $0\leq \alpha\leq 2\omega$; then, $N_{\mathcal L}^{{+}{-}{-}}(k)=N_{\mathcal L'}^{{+}{-}{-}}(k)$.
This concludes the proof for the octant ${+}{-}{-}$.

Entirely similar arguments apply to the octants ${+}{+}{-}$ and ${+}{-}{+}$, by considering the elements written as $(k-x,x-y,-y)$ and $(k-x,-y,x-y)$ for $0<y<x<k$ respectively.
\end{proof}

\begin{rem}
The previous proof gives an explicit formula for $N_{\mathcal L}^{{+}{+}{+}}(k)$, $N_{\mathcal L}^{{+}{-}{-}}(k)$, $N_{\mathcal L}^{{+}{+}{-}}(k)$ and $N_{\mathcal L}^{{+}{-}{+}}(k)$ for every $k$; thus, also for $N_{\mathcal L}(k,0)$.
Actually, we have checked with the computer that the formulas hold for $k\leq 1000$.
A formula for $N_{\mathcal L}^{{+}{+}{+}}(k)$ was included before Lemma~\ref{lem5:A(r,omega)}.
An explicit expression for $N_{\mathcal L}^{{+}{-}{-}}(k)$ could also be given but the formula must be divided into many cases, namely, $rt$ odd, $rt$ even and $r$ odd, $rt$ even and $r$ odd, and (following the notation inside the proof) with each of these subdivided into $k_0$ odd, $k_0>0$ even, $k_0=0$ (subdivided again by $y_\alpha=0$, $y_\alpha>0$).
Similar complications occur for the octants ${+}{+}{-}$ and ${+}{-}{+}$.

Any of these expressions mentioned above contains a main term and a residual term  written as a sum of floors of rational numbers.
For example, when $rt$ is odd and $k$ is even and not divisible by $rt$, \eqref{eq5:+--1} implies that
\begin{equation*}
N_{\mathcal L}^{{+}{-}{-}}(k) = t\binom{\omega+1}{2}+\omega+1+\sum_{\alpha=0}^{\omega}
    \left(\lfloor\tfrac{x_0-1-y_\alpha}{r}\rfloor+\lfloor\tfrac{y_\alpha-1}{r}\rfloor\right),
\end{equation*}
where $\omega=\lfloor k/{rt}\rfloor$, $x_0$ is the only integer such that $2x_0\equiv k\pmod {rt}$ and $0 \leq x_0<rt$ and $y_\alpha$ is the only solution of $2y_\alpha\equiv -x_0-\omega+2\alpha\pmod r$ satisfying $0\leq y_\alpha<r$.

It is easy to give an expression for $N_{\mathcal L}(k,\zz)$ for $\zz$ equal to $2$ and $3$.
It is also possible for $\zz=1$ in a similar ---and simpler--- way as in the previous proof.
This implies that we can compute explicitly every $p$-spectrum of the lens space $L(r^2t; 1, 1+rt, 1+3rt)$ by using the formula in Theorem~\ref{thm3:dim V_k,p^Gamma}.
\end{rem}

\begin{rem}
In a previous version \cite{LMRhodgeiso_old} of this article, we proved Theorem~\ref{thm4:isolattices} (for $rt$ odd) with a completely different method which was more involved but gave useful additional geometric information on the lattices.
\end{rem}

\section{Lens spaces $p$-isospectral for every $p$}\label{sec:all-p-iso}

In this section we summarize the spectral properties of lens spaces that can be obtained from the results on congruence lattices in the previous three sections, in light of the characterization in Theorem~\ref{thm3:characterization}.
It also contains information  on the geometric, topological and spectral properties of the examples.

\begin{thm}\label{thm7:all-p-iso-lens}
For any $r$ and $t$ positive integers with $r\geq7$ and $r\not\equiv0 \pmod3$, the lens spaces
\begin{equation*}
L(r^2t; 1,1+rt,1+3rt)
\quad\text{and}\quad
L(r^2t; 1,1-rt,1-3rt)
\end{equation*}
are $p$-isospectral for all $p$ but not strongly isospectral.
\end{thm}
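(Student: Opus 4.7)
The plan is to combine three results already established in the paper. The $p$-isospectrality will be extracted from Theorem~\ref{thm4:isolattices} together with Theorem~\ref{thm3:characterization}~(ii); the failure of strong isospectrality will follow from Proposition~\ref{prop7:lens-non-strongly} once $L$ and $L'$ are shown to be non-isometric, and the latter will come from the ordered-partition criterion established inside the proof of Proposition~\ref{lem6:non-isom}.

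For the $p$-isospectrality, I would set $\mathcal L = \mathcal L(r^2t;\,1,\,1+rt,\,1+3rt)$ and $\mathcal L' = \mathcal L(r^2t;\,1,\,1-rt,\,1-3rt)$, the congruence lattices associated to $L$ and $L'$ respectively. Under the hypothesis $r\not\equiv 0\pmod{3}$, Theorem~\ref{thm4:isolattices} asserts that $\mathcal L$ and $\mathcal L'$ are $\norma{\cdot}^*$-isospectral. Theorem~\ref{thm3:characterization}~(ii) then yields that $L$ and $L'$ are $p$-isospectral for every $p$.

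For the non-strong-isospectrality it suffices, by Proposition~\ref{prop7:lens-non-strongly}, to prove that $L$ and $L'$ are not isometric. I would rewrite the parameters in terms of $\theta = 1+rt$, which has order $r$ in $(\Z/r^2t\Z)^\times$; since $\theta^{\pm 1}\equiv 1\pm rt$ and $\theta^{\pm 3}\equiv 1\pm 3rt\pmod{r^2t}$, we have
\begin{equation*}
\mathcal L = \mathcal L(r^2t;\,\theta^0,\theta^1,\theta^3), \qquad
\mathcal L' = \mathcal L(r^2t;\,\theta^0,\theta^{-1},\theta^{-3}).
\end{equation*}
Using $\theta^{-1} = \theta^{r-1}$ and $\theta^{-3} = \theta^{r-3}$ and reordering, the increasing sequences of exponents attached to $\mathcal L$ and $\mathcal L'$ become $0<1<3$ and $0<r-3<r-1$. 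The corresponding ordered partitions of $r$, in the sense used in the proof of Proposition~\ref{lem6:non-isom}, are therefore $(1,2,r-3)$ and $(r-3,2,1)$. Since $r\geq 7$, the three entries $1$, $2$, $r-3$ are pairwise distinct, and the three cyclic shifts of $(1,2,r-3)$, namely $(1,2,r-3)$, $(2,r-3,1)$ and $(r-3,1,2)$, all differ from $(r-3,2,1)$. By the cyclic-shift criterion in the proof of Proposition~\ref{lem6:non-isom}, $L$ and $L'$ are not $\norma{\cdot}$-isometric, and hence by Proposition~\ref{prop3:isometrias} not isometric as Riemannian manifolds.

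The main obstacle in the whole argument lies in Theorem~\ref{thm4:isolattices}, whose proof is the delicate octant-by-octant counting argument carried out through Section~\ref{sec:families}; granted that result, the present theorem is essentially a formal consequence of the characterization theorem~\ref{thm3:characterization}, the partition criterion of Proposition~\ref{lem6:non-isom}, and the impossibility of strong isospectrality for non-isometric lens spaces.
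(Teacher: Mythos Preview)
Your argument is correct and follows essentially the same route as the paper: $p$-isospectrality via Theorem~\ref{thm4:isolattices} and Theorem~\ref{thm3:characterization}(ii), and the failure of strong isospectrality via Proposition~\ref{prop7:lens-non-strongly} once non-isometry is known. The only cosmetic difference is in the non-isometry step: the paper simply invokes Proposition~\ref{lem6:non-isom} (whose $m=3$ case treats the pair $\mathcal L(q;\theta^0,\theta^2,\theta^3)$, $\mathcal L(q;\theta^0,\theta^{-2},\theta^{-3})$, which is the same pair as $\{\mathcal L',\mathcal L\}$ after multiplying by $\theta^{\mp 3}$), whereas you bypass that identification and apply the ordered-partition criterion from its proof directly to the exponent sequences $(0,1,3)$ and $(0,r-3,r-1)$.
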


Tables~\ref{table:m=3} and \ref{table:m=4} give more such pairs in dimensions $5$ and $7$ respectively.
The proof of $p$-isospectrality for all $p$ follows immediately from Theorems~\ref{thm3:characterization} and \ref{thm4:isolattices}.
The non-isometry comes from Proposition~\ref{lem6:non-isom}.
They are not strongly isospectral by the following general fact, which follows from well known results.
We include a proof for completeness.

\begin{prop}\label{prop7:lens-non-strongly}
If $L$ and $L'$ are strongly isospectral lens spaces, then they are isometric.
\end{prop}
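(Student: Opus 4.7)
The plan is to reduce strong isospectrality to a purely character-theoretic statement, and then use that $\gamma,\gamma'\in T$ to extract the combinatorial condition of Proposition~\ref{prop3:lens-isom}.

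First, I would argue that strong isospectrality of $L=\Gamma\ba S^{2m-1}$ and $L'=\Gamma'\ba S^{2m-1}$ implies the equivalence of $G$-representations
\begin{equation*}
L^2(\Gamma\ba G)\;\simeq\; L^2(\Gamma'\ba G).
\end{equation*}
This is a standard fact: applying strong isospectrality to each Laplace operator $\Delta_{\tau,\Gamma}$ on the homogeneous vector bundles $E_\tau$ (which are natural bundles) yields, via Proposition~\ref{prop2:tau-equiv=>tau-iso} and the action of the Casimir, the equality of $K$-multiplicities in every $\pi$-isotypic piece; combining these forces $d_\pi^\Gamma=d_\pi^{\Gamma'}$ for every $\pi\in\widehat G$. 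By \eqref{eq2:L^2(GammaG)}, this gives the asserted isomorphism.

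Next, I would convert this into a statement about conjugacy classes. Since $d_\pi^\Gamma=\tfrac{1}{|\Gamma|}\sum_{\g\in\Gamma}\chi_\pi(\g)$ by Frobenius reciprocity, the equalities $d_\pi^\Gamma=d_\pi^{\Gamma'}$ for all $\pi\in\widehat G$ say that $\sum_{\g\in\Gamma}f(\g)=\sum_{\g'\in\Gamma'}f(\g')$ for every irreducible character, and hence (by density of characters in class functions on the compact group $G$) for every continuous class function $f$ on $G$. Equivalently, the multisets of $G$-conjugacy classes of $\Gamma$ and of $\Gamma'$ coincide. In particular $|\Gamma|=|\Gamma'|=q$, and the generator $\g$ of $\Gamma$ must be $G$-conjugate to some element of $\Gamma'$, which—having order $q$—is a generator $\g'^{\,t}$ with $\gcd(t,q)=1$.

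Finally, I would exploit that $\g$ and $\g'^{\,t}$ both lie in the standard maximal torus $T$. Two elements of $T$ that are conjugate in the compact connected group $G=\SO(2m)$ are already conjugate by an element of $N_G(T)$, so they lie in the same Weyl-group orbit. The Weyl group of $\SO(2m)$ acts on $T$ by permutations of the rotation blocks together with sign changes on an \emph{even} number of angles. Writing this out for
\begin{equation*}
\g=\diag\bigl(R(2\pi s_j/q)\bigr)_{j=1}^m,\qquad \g'^{\,t}=\diag\bigl(R(2\pi t s_j'/q)\bigr)_{j=1}^m,
\end{equation*}
where $R(\theta)$ denotes the $2\times 2$ rotation matrix, the conjugacy gives a permutation $\sigma$ of $\{1,\dots,m\}$ and signs $\epsilon_j\in\{\pm1\}$ with $s_{\sigma(j)}'\equiv t\,\epsilon_j s_j\pmod q$ for all $j$. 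This is exactly condition~(4) of Proposition~\ref{prop3:lens-isom}, so $L$ and $L'$ are isometric. The main subtlety is the first step (extracting the $G$-module isomorphism from strong isospectrality), since different irreducible $\pi$'s may share the same Casimir eigenvalue; but standard arguments—varying $\tau$ and using that distinct irreducibles of $G$ have distinct $K$-branching laws—take care of this.
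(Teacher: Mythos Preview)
Your overall strategy is exactly the paper's: strong isospectrality $\Rightarrow$ representation equivalence $\Rightarrow$ almost conjugacy $\Rightarrow$ (for cyclic groups) conjugacy $\Rightarrow$ isometry. The paper compresses the first two implications into citations of Pesce \cite{Pe1} and Wolf \cite{Wo2}, while you sketch direct arguments; the last step (almost conjugate cyclic groups are conjugate) is the same in both.

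One point in your sketch is not correct as stated. You write that ``distinct irreducibles of $G$ have distinct $K$-branching laws'' in order to separate the $d_\pi^\Gamma$ once you know the $\tau$-spectra for all $\tau$. For $(G,K)=(\SO(2m),\SO(2m-1))$ this fails: the irreducible representations with highest weights $(a_1,\dots,a_{m-1},a_m)$ and $(a_1,\dots,a_{m-1},-a_m)$ have \emph{identical} restrictions to $K$ and the \emph{same} Casimir eigenvalue, so no amount of varying $\tau\in\widehat K$ will distinguish $d_{\pi_\Lambda}^\Gamma$ from $d_{\pi_{\bar\Lambda}}^\Gamma$. What you actually obtain from $\tau$-isospectrality for all $\tau$ is only $d_{\pi_\Lambda}^\Gamma+d_{\pi_{\bar\Lambda}}^\Gamma = d_{\pi_\Lambda}^{\Gamma'}+d_{\pi_{\bar\Lambda}}^{\Gamma'}$, i.e.\ representation equivalence in $\Ot(2m)$ rather than in $\SO(2m)$. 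This is precisely why the paper phrases everything in $\Ot(2m)$. Fortunately your argument survives this correction: almost conjugacy in $\Ot(2m)$ still gives that $\gamma$ is $\Ot(2m)$-conjugate to some generator $\gamma'^{\,t}$, and two torus elements conjugate in $\Ot(2m)$ differ by an element of the full hyperoctahedral group (arbitrary sign changes and permutations), which is exactly condition~(4) of Proposition~\ref{prop3:lens-isom}. So the route is right; just replace $\SO(2m)$ by $\Ot(2m)$ in your first two steps and drop the erroneous branching claim.
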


\begin{proof}
We first assume  that $\Gamma\ba S^{2m-1}$ and $\Gamma'\ba S^{2m-1}$ are strongly isospectral spherical spaces forms, where $\Gamma$ and $\Gamma'$ are arbitrary finite subgroups of $\Ot(2m)$ acting freely on $S^{2m-1}$.
By Proposition~1 in \cite{Pe1}, the subgroups $\Gamma$ and $\Gamma'$ are representation equivalent, i.e.\ $L^2(\Gamma\ba \Ot(2m))$ and $L^2(\Gamma'\ba \Ot(2m))$ are equivalent representations of $\Ot(2m)$.
Hence, $\Gamma$ and $\Gamma'$ are almost conjugate in $\Ot(2m)$ (see Lemma~2.12 in \cite{Wo2}).

In our case, $L=\Gamma\ba S^{2m-1}$ and $L'=\Gamma'\ba S^{2m-1}$ are lens spaces with $\Gamma$ and $\Gamma'$ cyclic subgroups of $\SO(2m)$.
Since almost conjugate cyclic subgroups are necessarily conjugate, then $L$ and $L'$ are isometric.
\end{proof}

We observe that the examples in Theorem~\ref{thm7:all-p-iso-lens} allow to obtain pairs of Riemannian manifolds in every dimension $n\ge 5$ that are $p$-isospectral for all $p$ and are not strongly isospectral.
Indeed, for this purpose, we may just take $M=L\times S^k$ and $M'=L'\times S^k$, for any $k\in \N_0$, where $L$, $L'$ is any pair of non-isometric lens spaces in dimension $5$ satisfying $p$-isospectrality for every $p$.
In relation to lens spaces of higher dimensions we have the following result.

\begin{thm}\label{thm7:high-dim}
For any $n_0 \ge 5$, there are pairs of non-isometric  lens spaces of dimension $n$, with $n>n_0$,  which are $p$-isospectral for all $p$.
\end{thm}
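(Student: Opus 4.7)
The strategy is to bootstrap the $5$-dimensional pair from Theorem~\ref{thm7:all-p-iso-lens} to arbitrary odd dimensions by appending common parameters. Given $n_0\ge 5$, fix $r\ge 7$ with $r\not\equiv 0\pmod 3$, $t\ge 1$, and $q=r^2 t$; by Theorem~\ref{thm7:all-p-iso-lens},
\[
L=L(q;\,1,\,1+rt,\,1+3rt)\quad\text{and}\quad L'=L(q;\,1,\,1-rt,\,1-3rt)
\]
are non-isometric and $p$-isospectral for every $p$. For any odd $n=2m-1>n_0$ (so $m\ge 4$), I take
\[
\widetilde{L}=L(q;\,1,\,1+rt,\,1+3rt,\underbrace{1,\dots,1}_{m-3}),\quad \widetilde{L}'=L(q;\,1,\,1-rt,\,1-3rt,\underbrace{1,\dots,1}_{m-3}),
\]
both of dimension $n$; running through $n=7,9,11,\dots$ this covers all odd $n>n_0$.

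For $p$-isospectrality on every level, I would invoke the cited result of Ikeda, which asserts that if $L(q;\bs)$ and $L(q;\bs')$ are $p$-isospectral for all $p$, then so are $L(q;\bs,s)$ and $L(q;\bs',s)$ for any $s$ coprime to $q$; this is a consequence of the multiplicative structure in the coordinates of Ikeda's generating functions for the $p$-spectra of lens spaces. Iterating the extension $m-3$ times delivers $\widetilde L$ and $\widetilde L'$. A self-contained alternative, using only the machinery of this paper, is to apply Theorem~\ref{thm3:characterization}(ii) after slicing the extended congruence lattice $\widetilde{\mathcal L}$ into cosets of $\mathcal L$ indexed by the last coordinate $a_{m+1}\in\Z$, and then matching corresponding slices of $\widetilde{\mathcal L}$ and $\widetilde{\mathcal L}'$ via the sign-flip/coordinate-permutation bijections from the proof of Theorem~\ref{thm4:isolattices}, in a way that preserves $\norma{\cdot}$ and zero-coordinate counts.

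The hard part will be non-isometry of $\widetilde L$ and $\widetilde L'$. By Proposition~\ref{prop3:lens-isom}, any isometry would produce a permutation $\sigma$, signs $\epsilon_j$, and a unit $t''\in(\Z/q\Z)^\times$ matching the parameter lists modulo $q$. The $m-3$ repeated $1$'s strongly constrain $t''$: if $\sigma$ sends padded $1$'s only to padded $1$'s, then $t''\equiv\pm 1\pmod q$ and the problem reduces to the already-established non-isometry of $L$ and $L'$. Otherwise, sending a padded $1$ to a nontrivial entry forces $t''\equiv\pm(1\pm rt)$ or $\pm(1\pm 3rt)\pmod q$; the presence of further padded $1$'s then produces congruences of the form $2rt\equiv c\pmod{r^2 t}$ or $4rt\equiv c\pmod{r^2 t}$ with $|c|$ a small constant, impossible for $r\ge 7$. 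Carrying out this finite case analysis cleanly is the main obstacle, but each branch reduces to an elementary modular incompatibility in the spirit of the proof of Proposition~\ref{lem6:non-isom}.
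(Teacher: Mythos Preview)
Your approach diverges from the paper's, and it contains a genuine gap. The ``result of Ikeda'' the paper invokes is \emph{not} a padding statement of the form ``if $L(q;\bs)$ and $L(q;\bs')$ are $p$-isospectral for all $p$ then so are $L(q;\bs,s)$ and $L(q;\bs',s)$''; it is a \emph{duality} \cite[Thm.~3.6]{Ik88}: to $L=L(q;s_1,\dots,s_m)\in\mathfrak L_0(q,m)$ one associates the complementary lens space $\overline L=L(q;\bar s_1,\dots,\bar s_h)$, where $\{\pm s_i\}\cup\{\pm\bar s_j\}$ exhausts $(\Z/q\Z)^\times$, and $p$-isospectrality for all $p$ transfers from $(L,L')$ to $(\overline L,\overline{L'})$. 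The paper extends this from $q$ prime to $q=r^2$ and applies it to the $5$-dimensional family of Theorem~\ref{thm7:all-p-iso-lens}; since $\dim\overline L=\phi(r^2)-7=r^2-r-7\to\infty$, arbitrarily large dimensions are reached.

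Your padding construction, by contrast, is not only not Ikeda's theorem---it is false. Take $q=49$ and pad the pair $L(49;1,6,15)$, $L(49;1,6,20)$ by a single $1$ to obtain the $7$-dimensional pair $L(49;1,6,15,1)$, $L(49;1,6,20,1)$. These are non-isometric: any isometry would require a unit $u$ with $u\cdot[1,1,6,15]$ matching $[1,1,6,20]$ up to signs and permutation, and the repeated entry forces $u\equiv\pm1$, whence $15\not\equiv\pm20\pmod{49}$. But Table~\ref{table:m=4}, which lists \emph{all} non-isometric $\norma{\cdot}^*$-isospectral pairs for $m=4$ and $q\le150$, contains only $[1,6,8,20]$--$[1,6,8,22]$ at $q=49$, and the padded lists (having a repeated parameter) are isometric to neither. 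Hence the padded pair is \emph{not} $p$-isospectral for all $p$. The ``multiplicative structure'' of Ikeda's generating functions does not help here: the extra factor $(1-z\xi^l)^{-1}(1-z\xi^{-l})^{-1}$ depends on the summation index $l$ and does not factor out of the sum over the group. Your self-contained alternative via lattice slicing fails for the same underlying reason: the proof of Theorem~\ref{thm4:isolattices} does \emph{not} supply a $\norma{\cdot}$- and $Z$-preserving bijection $\mathcal L\to\mathcal L'$ that could be applied coset by coset. The bijections in Lemma~\ref{lem:z=1,2,3} cover only $\zz\ge1$; for $\zz=0$ the argument is a pure counting comparison in each octant (cf.\ \eqref{eq5:+--2}, where the correspondence $y'_\alpha=y_{\omega-\alpha}$ matches terms with different $\alpha$ rather than points with points).
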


\begin{proof}
We will apply Theorem~\ref{thm4:isolattices}, together with an extension of a duality result of Ikeda. For each $q\in\N$ and $n=2m-1$ odd, denote by $\mathfrak L_0(q,m)$ the classes of non-isometric $n$-dimensional lens spaces $L(q;s_1,\dots,s_m)$ such that $s_i\not\equiv \pm s_j\pmod q$ for all $i\neq j$.
Set $h=(\phi(q)-2m)/2$, where $\phi$ is the Euler function.

To each lens space $L=L(q;s_1,\dots,s_m)$ in $\mathfrak L_0(q,m)$, one associates the lens space $\overline L=L(q;\bar s_1,\dots,\bar s_h)$, where the parameters $\bar s_1,\dots,\bar s_h$ are chosen so that the set $\{\pm s_1,\dots,\pm s_m,\pm \bar s_1,\dots,\pm \bar s_h\}$ exhausts  the coprime classes module $q$.
We thus obtain a new lens space $\overline L$ of dimension $2h-1 = \phi(q)-2m-1$.

By \cite[Thm.~3.6]{Ik88}, if $q$ is prime, $L$ and $L'$ in $\mathfrak L_0(q,m)$ are $p$-isospectral for all $p$ if and only if $\overline L$ and $\overline{ L'}$ are $p$-isospectral for all $p$.

Now, for each  $q=r^2$, $r$ an odd prime, $r\not\equiv0\pmod3$, in Theorem~\ref{thm4:isolattices} we have obtained lens spaces $L$ and $L'$ in $\mathfrak L_0(r^2,3)$ that are $p$-isospectral for all $p$. Now, by an extension of Ikeda's argument  in \cite[Thm 3.6]{Ik88} for $q$  prime ---to be sketched below---  one can show that the associated lens spaces $\overline L$ and $\overline {L'}$ are $p$-isospectral for all $p$. These lens spaces have dimension $2h-1=\phi(r^2)-7 = r^2-r-7$, a quantity that tends to infinity when $r$ does, thus the assertion in the theorem immediately follows.

We now explain why Ikeda's argument also works  in the case $q=r^2$, $r$ prime. One has that $L,L'$ are isospectral for every $p$ if and only if they have the same generating functions (see \cite[Thm 2.5]{Ik88}). Thus, one needs to show that the analogous sums  for  $\overline L$ and $\overline{L'}$ are equal to each other.

The generating function for $L$ is given as a sum over the elements in the cyclic group generated by $g$ (see \cite[Thm 2.5]{Ik88}), which can be split  into a subsum over $g^k$ with $(k,q)=1$ plus a subsum over $g^k$ with $(k,q)=r$ plus a term corresponding to the identity element (i.e.\ $k=0$) and similarly for the generating function for $L'$, with $g'$ in place of $g$.   As asserted, the total sums are equal to each other for $L$ and $L'$.

It turns out that to prove the assertion for $\overline L$ and $\overline{L'}$ it suffices to show that the subsums just mentioned are equal to each other for $L$ and $L'$ (the contribution for $k=0$ is the same in both cases). But it is not hard to show that this is true for the second subsums (hence also for the first ones) for the lens spaces corresponding to the lattices in Theorem~\ref{thm4:isolattices}, by taking into account  that both lattices  are of the form $\mathcal L(q;s_1,\ldots, s_n)$ with $s_i\equiv\pm 1 \pmod q$. This concludes the proof.
\end{proof}

\begin{rem}
In Section~\ref{sec:finiteness} we have seen that the finite set of $N_{\mathcal L}^{\mathrm{red}}(k,\zz )$  determines whether two $q$-congruence lattices are $\norma{\cdot}^*$-isospectral.
Moreover, we point out that these numbers also determine explicitly each individual $p$-spectrum of a lens space $L=\Gamma\ba S^{2m-1}$ for $0\le p \le n=2m-1$.
Indeed, by Proposition~\ref{prop2:p-spectrum}, the multiplicities in the $p$-spectrum of $L$ depend only on the numbers $\dim V_{\pi_{k,p}}^\Gamma$ and $\dim V_{\pi_{k,p+1}}^\Gamma$ which, by expression \eqref{eq3:dim V_k,p^Gamma}, are determined by the $N_{\mathcal L}(k,\zz )$ which, in turn, can be computed by using equation \eqref{eq4:N^*-N^red} if we know the numbers $N_{\mathcal L}^{\mathrm{red}}(k,\zz )$.
\end{rem}

\begin{rem}\label{rem7:orbifolds}
If the discrete subgroup $\Gamma$ of $\SO(n+1)$ acts possibly with fixed points on $S^n$, then $\Gamma\ba S^n$ is a good orbifold.
For instance, in our case, if we take $L(q;s_1,\dots,s_m)$ as in \eqref{eq3:L(q;s)} with $s_1,\dots,s_m$ not necessarily coprime to $q$ and $\gcd(q,s_1,\dots,s_m)=1$, we obtain an \emph{orbifold lens space}.
See \cite{Shams} for an extension of Ikeda's result to orbifold lens spaces.

Most of the results in this paper  also work  for orbifold lens spaces.
For instance, the determination of the $p$-spectrum in Theorem~\ref{thm3:dim V_k,p^Gamma} via Proposition~\ref{prop2:p-spectrum} and the characterizations in Theorem~\ref{thm3:characterization} between lens spaces and congruence lattices.
Furthermore, Section~\ref{sec:finiteness} also works  for congruence lattices $\mathcal L(q;s_1, \dots,s_m)$ without the assumption that the $s_j$ are coprime to $q$.
Proposition~\ref{prop7:lens-non-strongly} is also valid in this context; that is, strongly isospectral orbifold lens spaces are necessarily isometric.
\end{rem}

We now show that the lens spaces constructed in Section~\ref{sec:families} are homotopically equivalent to each other.
We note that they cannot be simply homotopically equivalent (see \cite[\S31]{Co}) since in this case they would be homeomorphic.

\begin{lemma} \label{lem:homotequiv}
The lens spaces $L(r^2t;1, 1+rt, 1+3rt)$, $L(r^2t;1, 1-rt, 1-3rt)$, $r\not\equiv 0\pmod 3$, associated to the congruence lattices in Theorem~\ref{thm4:isolattices} are homotopically equivalent to each other.
\end{lemma}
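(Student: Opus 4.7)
The plan is to invoke the classical Whitehead-type homotopy classification of lens spaces (see e.g.~\cite[Ch.~V]{Co}): the $(2m-1)$-dimensional lens spaces $L(q;s_1,\dots,s_m)$ and $L(q;s_1',\dots,s_m')$ are homotopy equivalent if and only if there exists an integer $a$ (coprime to $q$) such that
$$
s_1 s_2\cdots s_m \;\equiv\; \pm\, a^m\, s_1' s_2'\cdots s_m' \pmod q.
$$
In our setting $m=3$, $q=r^2t$, so it suffices to exhibit an integer $a$ with $(1+rt)(1+3rt) \equiv \pm a^3 (1-rt)(1-3rt) \pmod{r^2t}$.

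The first step is a direct expansion: since $(rt)^2 = r^2 t^2$ is a multiple of $r^2 t$, one has
$$
(1\pm rt)(1\pm 3rt) \;\equiv\; 1\pm 4rt \pmod{r^2 t},
$$
so the products of the parameters of $L$ and $L'$ reduce to $1+4rt$ and $1-4rt$ respectively. Moreover, $(1+4rt)(1-4rt) = 1-16 r^2 t^2 \equiv 1 \pmod{r^2 t}$, so $1-4rt$ is a unit modulo $r^2 t$ with inverse $1+4rt$. Taking the $+$ sign in the criterion, the task reduces to finding $a\in\Z$ with
$$
a^3 \;\equiv\; (1+4rt)^2 \;\equiv\; 1+8rt \pmod{r^2 t}.
$$

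Next, I would use the natural ansatz $a=1+c\,rt$ for an integer $c$ to be determined. Since $(c\,rt)^2$ is again a multiple of $r^2 t$, the binomial expansion collapses to $a^3 \equiv 1 + 3c\,rt \pmod{r^2 t}$, so the condition becomes simply
$$
3c \;\equiv\; 8 \pmod r.
$$
This congruence is solvable precisely when $\gcd(3,r)=1$, which is exactly the standing hypothesis $r\not\equiv 0\pmod 3$; any such $c$ yields an $a$ with $a\equiv 1\pmod r$ and $a\equiv 1\pmod t$, hence $\gcd(a,q)=1$. This produces the homotopy equivalence $L\simeq L'$.

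The argument is essentially computational once Whitehead's criterion is in hand, and there is no real obstacle to speak of: the hypothesis $3\nmid r$ enters at exactly the point where we need to invert $3$ modulo $r$, which is consistent with the fact that this same divisibility condition was needed in Theorem~\ref{thm4:isolattices}. The only mild subtlety is the verification that higher-order terms in the binomial expansion of $(1+crt)^3$ vanish modulo $r^2 t$, which they do because any factor of $(rt)^2$ is divisible by $r^2 t$.
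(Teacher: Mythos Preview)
Your proof is correct and relies on the same homotopy criterion (Cohen~\cite[(29.6)]{Co}) as the paper, but the execution is genuinely different and more direct. The paper rewrites the parameters as powers of $\theta=1+rt$, reduces to showing $\theta^8$ (equivalently $\theta$) is a cube in $(\Z/r^2t\Z)^\times$, and then argues structurally: it splits into the cases $rt$ odd and $rt$ even, invokes the cyclic (or cyclic-times-$2$-group) structure of the unit group, and checks that cubing is surjective on the relevant subgroup. Your approach bypasses all of this by exploiting directly that $(rt)^2\equiv 0\pmod{r^2t}$, which collapses every computation to a first-order linear congruence: the products become $1\pm 4rt$, the target becomes $1+8rt$, and the ansatz $a=1+crt$ yields $3c\equiv 8\pmod r$. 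This avoids any case analysis on the parity of $rt$ and any appeal to the structure of the unit group; in effect you are working in the order-$r$ cyclic subgroup $\{1+jrt : 0\le j<r\}$ from the start, where cubing is multiplication by $3$ on the exponent. The gain is brevity and transparency; the paper's route has the minor advantage of making explicit that $\theta$ itself is a cube, but that is not needed for the lemma.
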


\begin{proof}
We have seen that $L=L(q; \theta^0, \theta^1, \theta^{3})$ and $L'=L(q; \theta^0, \theta^{-1}, \theta^{-3})$, where $\theta =1+rt$.
The condition for homotopy equivalence of $L$ and $L'$ (see \cite[(29.6)]{Co}) is that $\pm \theta^{8}\equiv d^3 \pmod {r^2t}$, for some $d \in\Z$.

We claim that $r$ divides $\phi(r^2t)$. Indeed, we can write $q= \prod_j p_j^{2v_{p_j}(r)+ v_{p_j}(t)}$ a product over primes $p_j$. We have
$$ \phi(q)= r \prod_j p_j^{v_{p_j}(r)+ v_{p_j}(t)-1}(p_j-1).$$
Furthermore, this implies that  if $r$ is odd then $2r$ divides $\phi(r^2t)$.

We first assume that $rt$ is odd. Then $H:=\Z_{r^2t}^\times$ is a cyclic group of order $\phi(r^2t)$.
Thus, if $\omega$ is a generator of this group, then $\omega ^{\phi(r^2t)/2r}$ has order $2r$. Hence, since $H$ is cyclic,  $\theta = \omega^{\pm h}$ for some  $h=j\phi(r^2t)/2r$ with $(j,2r)=1$.
Now, if $3$ divides $\phi(r^2t)$, since $(3,2r)=1$, then   $\theta = \left(\omega^\frac{\phi(r^2t)j}{2r3}\right)^3$, as asserted.  If $3$ does not divide $\phi(r^2t)$, then the map $x\mapsto x^3$ is surjective, hence $\theta$ is again in the image.
This proves the assertion for $rt$ odd.

In case  $rt$ is even, then $\Z_{r^2t}^\times$ is a cyclic group $H$ times an abelian 2-group $K$.
Again the map $x\mapsto x^3$ in $K$ is surjective.
By a similar argument as before we show that $\theta$ is in the image of $x\mapsto x^3$ in $H$.
\end{proof}

\begin{rem}
In \cite{DR}, P.~Doyle and the third named author showed examples of disconnected flat orbifolds in dimension two that are $p$-isospectral for every $p$ but are not strongly isospectral.
\end{rem}

\section{$\tau$-isospectrality}\label{sec:tau-isospectrality}
In this section we give complementary spectral information, showing in a direct way the non-strong isospectrality of the pairs in Theorem~\ref{thm7:all-p-iso-lens}.
To make the computations easier, we consider in the sequel only the simplest pair
\begin{align*}
L&=L(49;1,6,15),\\
L'&=L(49;1,6,20)
\end{align*}
of non-isometric lens spaces $p$-isospectral for all $p$.
This pair is associated with the first row in Table~\ref{table:m=3} and it is isometric to the first pair in the family in Theorem~\ref{thm7:all-p-iso-lens} (see \eqref{eq6:ex-m=3-q=49}).
We denote by $\Gamma$ and $\Gamma'$ the finite cyclic subgroups of the torus $T\subset SO(6)$ of order $q=49$ such that $L=\Gamma\ba S^{5}$ and $L'=\Gamma'\ba S^{5}$.

We write $G=\SO(2m)$ and $K=\SO(2m-1)$ as in Section~\ref{sec:prelim}.
Any representation $\tau$ of $K$ induces a strongly elliptic natural operator $\Delta_{\tau,\Gamma}$ on the smooth sections of a natural bundle on a spherical space form $\Gamma\ba S^{2m-1}$.
By using representation theory, we will exhibit many choices of representations $\tau$ of $K$ such that $L$ and $L'$ are not $\tau$-isospectral.

\begin{lemma}\label{prop7:no_tau-iso}
The lens spaces $L=L(49;1,6,15)$ and $L'=L(49;1,6,20)$ are not $\tau$-isospectral for every irreducible representation $\tau$ of $\SO(5)$ with highest weight of the form $b_1\varepsilon_1 + b_2\varepsilon_2$ where
\begin{equation}\label{eq7:tau-weights}
4\geq b_1\geq 3\geq b_2\geq0.
\end{equation}
\end{lemma}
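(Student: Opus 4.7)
\smallskip

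\noindent\emph{Proof proposal.} The starting point is Proposition~\ref{prop2:tau-equiv=>tau-iso}, which expresses the multiplicity of an eigenvalue $\lambda$ of $\Delta_{\tau,\Gamma}$ as
$$
d_{\lambda}(\tau, \Gamma) = \sum_{\pi\in\widehat G :\, \lambda(C,\pi)=\lambda} d_\pi^\Gamma \;[\tau:\pi_{|K}].
$$
Consequently, to prove that $L$ and $L'$ are not $\tau$-isospectral it suffices to exhibit some real number $\lambda_0$ for which
$$
\sum_{\pi:\,\lambda(C,\pi)=\lambda_0} [\tau:\pi_{|K}] \big(d_\pi^\Gamma-d_\pi^{\Gamma'}\big) \neq 0.
$$

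The plan is to produce, for each admissible pair $(b_1,b_2)$, a single irreducible $\pi=\pi_\Lambda$ of $G=\SO(6)$ which realises the discrepancy. By the classical $\SO(2m)\downarrow\SO(2m-1)$ branching rule, the $\SO(5)$-representation $\tau$ with highest weight $b_1\varepsilon_1+b_2\varepsilon_2$ appears in $\pi_\Lambda|_K$ (with $\Lambda=a_1\varepsilon_1+a_2\varepsilon_2+a_3\varepsilon_3$) precisely when $a_1\geq b_1\geq a_2\geq b_2\geq |a_3|$. The natural candidate is $\Lambda=b_1\varepsilon_1+b_2\varepsilon_2$ (take $a_3=0$), which is the dominant weight of smallest norm with this property; alternatively, the single choice $\Lambda=4\varepsilon_1+3\varepsilon_2$ already contains every $\tau$ in the stated range of $(b_1,b_2)$ in its restriction. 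For the chosen $\pi_\Lambda$ one then computes, via Lemma~\ref{lem3:L_Gamma},
$$
d_{\pi_\Lambda}^{\Gamma}=\sum_{\mu\in\mathcal L}m_{\pi_\Lambda}(\mu),
\qquad
d_{\pi_\Lambda}^{\Gamma'}=\sum_{\mu\in\mathcal L'}m_{\pi_\Lambda}(\mu),
$$
where $\mathcal L=\mathcal L(49;1,6,15)$ and $\mathcal L'=\mathcal L(49;1,6,20)$. The weights $\mu$ of $\pi_\Lambda$ live inside the small cube $|\mu_j|\leq a_1\leq 4$, so only a handful of lattice points contribute and each $m_{\pi_\Lambda}(\mu)$ can be evaluated explicitly (e.g.\ by Freudenthal's recursion, or by the tableau model for $\SO(6)$).

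The key conceptual point is that, unlike the representations $\pi_{k,p}$ treated in Lemma~\ref{lem3:mult-k-p}, the weight multiplicities of a generic $\pi_\Lambda$ are not invariant under permutations and sign changes of coordinates that merely preserve $\norma{\mu}$ and $Z(\mu)$: they actually depend on the ordering and signs of the individual $a_j$. Thus two lattices which are only $\norma{\cdot}^*$-isospectral but not $\norma{\cdot}$-isometric (which $\mathcal L$ and $\mathcal L'$ are, by Proposition~\ref{prop3:isometrias} and Theorem~\ref{thm4:isolattices}) will in general produce distinct sums $\sum_{\mu\in\mathcal L}m_{\pi_\Lambda}(\mu)$ and $\sum_{\mu\in\mathcal L'}m_{\pi_\Lambda}(\mu)$.

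The main obstacle is avoiding cancellation at the level of Casimir eigenvalues: one must check that the other irreducible representations $\pi'$ of $G$ with $\lambda(C,\pi')=\lambda(C,\pi_\Lambda)$ and $[\tau:\pi'_{|K}]\neq 0$ do not collectively cancel the observed difference. Since $\Lambda$ has small coordinates (bounded by $4$), the set of such $\pi'$ is finite and explicitly enumerable: one lists the dominant weights $\Lambda'=a_1'\varepsilon_1+a_2'\varepsilon_2+a_3'\varepsilon_3$ satisfying $\|\Lambda'+\rho\|^2=\|\Lambda+\rho\|^2$ and $a_1'\geq b_1\geq a_2'\geq b_2\geq|a_3'|$, and for each of them repeats the lattice-sum computation. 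In practice this reduces to a short finite verification (eight pairs $(b_1,b_2)$, each with a very small list of competing $\pi'$), which can be carried out by hand or on a computer, thereby completing the proof.
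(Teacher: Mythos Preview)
Your outline is correct and is essentially the paper's approach: pick $\Lambda_0=4\varepsilon_1+3\varepsilon_2$, observe via branching that every $\tau$ in the stated range occurs in $\pi_{\Lambda_0}|_K$, and compare $d_{\pi_{\Lambda_0}}^{\Gamma}$ with $d_{\pi_{\Lambda_0}}^{\Gamma'}$ using Lemma~\ref{lem3:L_Gamma}.

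The one point you leave open is exactly the point the paper settles cleanly. You describe the ``main obstacle'' as a finite case-check over all $\pi'$ with the same Casimir eigenvalue, to be done for each of the eight pairs $(b_1,b_2)$. The paper instead eliminates this obstacle in one stroke: with $\rho=2\varepsilon_1+\varepsilon_2$ one has $\lambda(C,\pi_{\Lambda_0})=\|\Lambda_0+\rho\|^2-\|\rho\|^2=47$, and the equation
\[
(a_1+2)^2+(a_2+1)^2+a_3^2=52,\qquad a_1\ge a_2\ge |a_3|,\ a_i\in\Z,
\]
has \emph{only} the solution $(a_1,a_2,a_3)=(4,3,0)$ (reduce mod $4$ to see all three squares are even, then $13=3^2+2^2+0^2$ uniquely). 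Hence $\pi_{\Lambda_0}$ is the sole irreducible with Casimir eigenvalue $47$, the sum in Proposition~\ref{prop2:tau-equiv=>tau-iso} collapses to a single term, and no cancellation analysis is needed. The paper then lists the weights of $\pi_{\Lambda_0}$ lying in $\mathcal L$ and $\mathcal L'$ and finds $\dim V_{\pi_{\Lambda_0}}^{\Gamma}=8\neq 6=\dim V_{\pi_{\Lambda_0}}^{\Gamma'}$, which finishes all eight $\tau$'s simultaneously. Your plan would work, but recognising the uniqueness of the Casimir eigenvalue for this particular $\Lambda_0$ is what turns the verification from ``short'' into ``trivial''.
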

\begin{proof}
We choose $\Lambda_0=4\varepsilon_1+3\varepsilon_2$ and we let $\pi_{\Lambda_0}$ be the irreducible representation  of $G$ with highest weight $\Lambda_0$.
The Casimir element $C$ acts on $\pi_{\Lambda_0}$ by $\lambda_0=\lambda(C,\pi_{\Lambda_0} ) = \langle \Lambda_0+\rho,\Lambda_0+\rho\rangle-\langle \rho,\rho\rangle = (6^2+4^2)-(2^2+1^2)=47$.
By \eqref{eq2:mult_lambda}, the multiplicity $d_{\lambda_0}(\tau,\Gamma)$ of the eigenvalue $\lambda_0$ of $\Delta_{\tau,\Gamma}$ is
\begin{equation}\label{eq:dlambda}
d_{\lambda_0}(\tau,\Gamma)=\sum_\pi \dim V_\pi^{\Gamma} [\tau:\pi],
\end{equation}
where the sum is over the   irreducible representations $\pi$ of $\SO(6)$ such that $\lambda(C,\pi)=\lambda_0=47$.
A similar expression is valid for $d_{\lambda_0}(\tau,\Gamma')$.

Now if $\pi$ has highest weight $\Lambda=a_1\varepsilon_1+a_2\varepsilon_2+a_3\varepsilon_3$ ($a_i\in\Z$ and $a_1\geq a_2\geq|a_3|$) and  $\lambda(C,\pi)=\lambda_0$, then we have  $(a_1+2)^2+(a_2+1)^2+a_3^2=\langle\Lambda+\rho, \Lambda+\rho\rangle = \langle\Lambda_0+\rho, \Lambda_0+\rho\rangle =52.$
By taking congruence modulo $4$, we see that the numbers $a_1+2>a_2+1>a_3$ are even.
Hence
$
\left(\frac{a_1+2}{2}\right)^2 + \left(\frac{a_2+1}{2}\right)^2 + \left(\frac{a_3}{2}\right)^2=13.
$
It is easy to check that this implies that $a_1=4$, $a_2=3$ and $a_3=0$, therefore $\Lambda=\Lambda_0$ and hence, there is only one irreducible representation $\pi$ of $G$ with $\lambda(C,\pi)=47$, namely $\pi =\pi_{\Lambda_0}$.

On the other hand, by Lemma~\ref{lem3:L_Gamma} we have that $\dim V_{\pi_0}^{\Gamma} = \sum_{\mu\in \mathcal L} m_{\pi_0}(\mu)$, where $\mathcal L$ is the associated congruence lattice given by \eqref{eq3:Lambda(q;s)} and similarly for $\mathcal L'$.
We compute by using Sage~\cite{Sage} the weights of $\pi_0$ (i.e.\ $\mu\in\Z^m$ such that $m_{\pi_0}(\mu)>0$) and their respective multiplicities:
$$
\begin{array}{ll@{\quad\qquad}ll@{\qquad}ll}
4\varepsilon_1 + 3\varepsilon_2                    & 1 &
4\varepsilon_1 + 2\varepsilon_2 \pm  \varepsilon_3 & 1 &
3\varepsilon_1 + 3\varepsilon_2 \pm  \varepsilon_3 & 2  \\
3\varepsilon_1 + 2\varepsilon_2 \pm 2\varepsilon_3 & 2 &
4\varepsilon_1 +  \varepsilon_2                    & 1 &
3\varepsilon_1 + 2\varepsilon_2                    & 4  \\
3\varepsilon_1 +  \varepsilon_2 \pm  \varepsilon_3 & 4 &
2\varepsilon_1 + 2\varepsilon_2 \pm  \varepsilon_3 & 5 &
3\varepsilon_1                                     & 4  \\
2\varepsilon_1 +  \varepsilon_2                    & 9 &
 \varepsilon_1 +  \varepsilon_2 \pm \varepsilon_3  &12 &
 \varepsilon_1                                     &16
\end{array}
$$
(only the dominant weights are shown, since weights in the same Weyl group orbit have the same multiplicity).

A weight $\mu=\sum_i a_i\varepsilon_i$ is in $\mathcal L$ (resp.\ $\mathcal L'$) if and only if
$$
a_1+6a_2+15a_3\equiv0\pmod{49}\quad (\text{resp.\ } a_1+6a_2+20a_3\equiv0\pmod{49}).
$$

With computer aid one checks that the weights of $\pi_{\Lambda_0}$ that satisfy these congruences (i.e lying in $\mathcal L$ (resp.\ $\mathcal L'$) are $\pm(4\varepsilon_1+3\varepsilon_3)$, $\pm(-\varepsilon_1+2\varepsilon_2-4\varepsilon_3)$ and $\pm(3\varepsilon_1-3\varepsilon_2+\varepsilon_3)$ (resp.\ $\pm(-3\varepsilon_2-4\varepsilon_3)$ and $\pm(-3\varepsilon_1+2\varepsilon_2+2\varepsilon_3)$).
Taking into account their multiplicities we obtain that $\dim V_{\pi_{\Lambda_0}}^{\Gamma} = 2+2+4=8$ and $\dim V_{\pi_{\Lambda_0}}^{\Gamma'} = 2+4=6$.

By applying  the branching law from $\SO(6)$ to $\SO(5)$ (see for instance \cite[Thm.~8.1.4]{GW}) , for every irreducible representation $\tau$ of $\SO(5)$ with highest weight of the form $b_1\varepsilon_1 + b_2\varepsilon_2$ with $ 4\geq b_1\geq 3\geq b_2\geq 0$   one has that $[\tau,{\pi_{\Lambda_0}}_{|K}] =1$.

Thus, we obtain from \eqref{eq:dlambda} that $$d_{\lambda_0}(\tau,\Gamma)=\dim V_{\pi_0}^{\Gamma}[\tau:\pi_0]=8, \qquad d_{\lambda_0}(\tau,\Gamma')=6. $$
Thus, $L$ and $L'$ cannot be $\tau$-isospectral for any $\tau$ as in the statement.
\end{proof}

\begin{rem}\label{rem7:strongly_tau-iso}
We note that the assertion in Lemma~\ref{prop7:no_tau-iso}  followed by comparing the multiplicities of $\lambda=\lambda(C,\pi)$ for only one choice of $\pi\in\widehat G$ satisfying $\dim V_{\pi}^\Gamma \neq \dim V_{\pi}^{\Gamma'}$.
By computer methods using Sage~\cite{Sage}, we have checked that there are many different  choices of $\pi$ that allow to find  many other $K$-types $\tau$ such that the lens spaces $L$ and $L'$ are not $\tau$-isospectral.
\end{rem}

We believe that there are only finitely many irreducible representations $\tau$ of $\SO(5)$ such that $L$ and $L'$ are $\tau$-isospectral.

\bigskip

\noindent\emph{Acknowledgement.}
The authors wish to thank Peter G.~Doyle for very stimulating conversations on the subject of this paper.
The first named author wishes to thank the support of the Oberwolfach Leibniz Fellows programme (Germany) in May--July 2013 and in August--November 2014.

\bibliographystyle{plain}

\end{document}